\documentclass[12pt,british,a4paper,reqno]{amsart}
    \usepackage[T1]{fontenc}
\usepackage[utf8]{inputenc}
	\usepackage[british]{babel}
    \DeclareMathSizes{12}{12}{7}{6}
\usepackage{wasysym}
	\usepackage{amsmath}
	\usepackage{amssymb}
	\usepackage{amsthm}

	\usepackage{adjustbox}
	\usepackage{bbm}
	\usepackage{braket}
	\usepackage{xcolor}
	\usepackage[shortlabels]{enumitem}
	\usepackage{etoolbox}
	\usepackage{fancyhdr}
	\usepackage{geometry}
	\usepackage{graphicx}
	\usepackage{ifsym}
	\usepackage{indentfirst}
	\usepackage{mathrsfs}
	\usepackage{mathtools}
	\usepackage{oplotsymbl} 
	\usepackage{proof}
	\usepackage{qtree}
	\usepackage{setspace}
	\usepackage{soul}
	\usepackage{stmaryrd}
	\usepackage{tensor}
	\usepackage{tikz}
	\usepackage{tikz-cd}
	\usepackage[normalem]{ulem}
	\usepackage{cancel}
	\usepackage{scalerel}
	\usepackage{stackengine}

    \usepackage[hidelinks, hyperfootnotes=false]{hyperref}
		\usepackage[capitalise]{cleveref}
		\usepackage{bookmark}
  \usepackage{nicematrix}
		
\makeatletter
\if@cref@capitalise
\crefname{theorem}{TheoUpperCase}{TheoUppercasesS}
\else
\crefname{theorem}{theoLowercase}{theoLowercaseS}
\fi
\makeatother
		
\DeclareFontFamily{U}{rcjhbltx}{}
\DeclareFontShape{U}{rcjhbltx}{m}{n}{<->rcjhbltx}{}
\DeclareSymbolFont{hebrewletters}{U}{rcjhbltx}{m}{n}
\let\beth\relax
\let\daleth\relax
\let\shin\relax

\DeclareMathSymbol{\beth}{\mathord}{hebrewletters}{98}
\DeclareMathSymbol{\daleth}{\mathord}{hebrewletters}{100}

\DeclareMathSymbol{\mem}{\mathord}{hebrewletters}{109}
\DeclareMathSymbol{\tsadi}{\mathord}{hebrewletters}{118}
\DeclareMathSymbol{\shin}{\mathord}{hebrewletters}{152}
\newcommand{\Beth}{\mathbin{\scalebox{1.3}{$\beth$}}}
\newcommand{\Daleth}{\mathbin{\scalebox{1.3}{$\daleth$}}}
\newcommand{\Mem}{\mathbin{\scalebox{1.3}{$\mem$}}}
\newcommand{\Tsadi}{\mathbin{\scalebox{1.3}{$\tsadi$}}}
\newcommand{\Shin}{\mathbin{\scalebox{1.3}{$\shin$}}}

\newcommand{\updave}{\textup{\davidsstar}}

\newcommand{\exactcomplete}{\heartsuit}
\newcommand{\nexangcomplete}{\clubsuit}

\newcommand{\exactWIC}{\diamondsuit}
\newcommand{\nexangWIC}{\spadesuit}
 
	\geometry{tmargin=2.5cm,bmargin=2.5cm,lmargin=2.2cm,rmargin=2.2cm}
	\setstretch{1.2}
	\bookmarksetup{numbered,open}
	\setlist[enumerate]{itemsep=3pt,topsep=3pt,
    labelsep=5pt, 
    leftmargin=29.40003pt,
	}
    


	\renewcommand{\geq}{\geqslant}
	
	\renewcommand{\phi}{\varphi}

	\providecommand{\corollaryname}{Corollary}
	\providecommand{\definitionname}{Definition}
	\providecommand{\notationname}{Notation}
	\providecommand{\examplename}{Example}
	\providecommand{\lemmaname}{Lemma}
	\providecommand{\propositionname}{Proposition}
	\providecommand{\remarkname}{Remark}
	\providecommand{\theoremname}{Theorem}
	\providecommand{\setupname}{Setup}
	\providecommand{\conjecturename}{Conjecture}
	\providecommand{\questionname}{Question}
	\providecommand{\warningname}{Warning}

	\theoremstyle{plain}
		\newtheorem{thm}{\protect\theoremname}[section] 
		\newtheorem{prop}[thm]{\protect\propositionname}
		    \crefalias{prop}{proposition}
		\newtheorem{lem}[thm]{\protect\lemmaname}
		\newtheorem{cor}[thm]{\protect\corollaryname}
		
		\newtheorem{thmx}{\protect\theoremname}
		\newtheorem{propx}[thmx]{\protect\propositionname}

	\theoremstyle{definition}
		\newtheorem{defn}[thm]{\protect\definitionname}
		\newtheorem{notn}[thm]{\protect\notationname}
		    \crefname{notn}{notation}{notations}
		    \Crefname{notn}{Notation}{Notations}
		\newtheorem{example}[thm]{\protect\examplename}
		\newtheorem{setup}[thm]{\setupname}
		    \crefname{setup}{setup}{setups}
		    \Crefname{setup}{Setup}{Setups}

	\theoremstyle{remark}
		\newtheorem{rem}[thm]{\protect\remarkname}
		\newtheorem{warning}[thm]{\protect\warningname}
		
	\numberwithin{figure}{section}
	\numberwithin{equation}{section}


	\usetikzlibrary{matrix,arrows,decorations.pathmorphing,positioning,decorations.pathreplacing}
	\tikzset{commutative diagrams/.cd, 
		mysymbol/.style = {start anchor=center, end anchor = center, draw = none}}
    \tikzset{
    labl/.style={anchor=north, rotate=90, inner sep=1mm}
    }

	\tikzcdset{every label/.append style = {font = \footnotesize}}
	\tikzcdset{scale cd/.style={every label/.append style={scale=#1},
    cells={nodes={scale=#1}}}}

	\let\amph=& 
	
	\tikzcdset{every label/.append style = {font = \footnotesize}}


	\newcommand{\BE}{\mathbb{E}}
			\renewcommand{\BE}{\mathchoice
  			{\mathbb{E}}
  			{\mathbb{E}}
  			{\raisebox{-0.2pt}{\scalebox{.6}{$\mathbb{E}$}}}
  			{\scalebox{.5}{$\mathbb{E}$}}
		}
	\newcommand{\BF}{\mathbb{F}}
			\renewcommand{\BF}{\mathchoice
  			{\mathbb{F}}
  			{\mathbb{F}}
  			{\raisebox{-0.2pt}{\scalebox{.6}{$\mathbb{F}$}}}
  			{\scalebox{.5}{$\mathbb{F}$}}
		}
	\newcommand{\BG}{\mathbb{G}}
			\renewcommand{\BG}{\mathchoice
  			{\mathbb{G}}
  			{\mathbb{G}}
  			{\raisebox{-0.2pt}{\scalebox{.6}{$\mathbb{G}$}}}
  			{\scalebox{.5}{$\mathbb{G}$}}
		}

	\newcommand{\CC}{\mathcal{C}}
	\newcommand{\CD}{\mathcal{D}}
	\newcommand{\CE}{\mathcal{E}}

	\newcommand{\CM}{\mathcal{M}}

	\newcommand{\CP}{\mathcal{P}}

	\newcommand{\CX}{\mathcal{X}}
	\newcommand{\CY}{\mathcal{Y}}

	\newcommand{\SE}{\mathscr{E}}
	\newcommand{\SF}{\mathscr{F}}
	\newcommand{\SG}{\mathscr{G}}
	\newcommand{\SH}{\mathscr{H}}
	\newcommand{\SI}{\mathscr{I}}
	
	\newcommand{\SK}{\mathscr{K}}
    \newcommand{\SL}{\mathscr{L}}
    \newcommand{\SM}{\mathscr{M}}

		
		\newcommand{\Ab}{\operatorname{\mathsf{Ab}}\nolimits}

		\newcommand{\Exactcat}{\operatorname{\mathsf{Exact}}\nolimits}
		\newcommand{\exactcat}{\operatorname{\mathsf{exact}}\nolimits}
		\newcommand{\ICExactcat}{\operatorname{\mathsf{IC-Exact}}\nolimits}
		\newcommand{\ICexactcat}{\operatorname{\mathsf{IC-exact}}\nolimits}
            \newcommand{\WICExactcat}{\operatorname{\mathsf{WIC-Exact}}\nolimits}
		\newcommand{\WICexactcat}{\operatorname{\mathsf{WIC-exact}}\nolimits}

		\newcommand{\op}{\raisebox{0.5pt}{\scalebox{0.6}{\textup{op}}}}
		\newcommand{\ring}{R}


		\newcommand{\Hom}{\operatorname{Hom}\nolimits}
		\newcommand{\End}{\operatorname{End}\nolimits}

		\makeatletter
            \newcommand{\xmapsfrom}[2][]{%
            \ext@arrow3095\leftarrowfill@{#1}{#2}\mapsfromchar
            }
        \makeatother
		
		\newcommand{\cok}{\operatorname{coker}\nolimits}

		
	    \newcommand\restr[2]{{\left.\kern-\nulldelimiterspace#1
						\right|_{#2}}}
		\newcommand{\inn}{\mathchoice
  			{\raisebox{0.6pt}{$\hspace{3pt}\in\hspace{3pt}$}}
  			{\raisebox{0.6pt}{$\hspace{3pt}\in\hspace{3pt}$}}
  			{\raisebox{0.5pt}{\scalebox{.5}{\hspace{1pt}$\in$\hspace{1pt}}}}
  			{\scalebox{.5}{$\in$}}
		}
		
		
		
		\newcommand{\catext}[2]{\mathchoice
  			{{#1}\operatorname{-Ext}({#2})}
  			{{#1}\operatorname{-Ext}({#2})}
  			{\raisebox{-0.2pt}{\scalebox{.6}{${#1}\operatorname{-Ext}({#2})$}}}
  			{\scalebox{.5}{${#1}\operatorname{-Ext}({#2})$}}
		}

		\newcommand{\fs}{\mathfrak{s}}
		\newcommand{\ft}{\mathfrak{t}}
            \newcommand{\fu}{\mathfrak{u}}
		


		\newcommand{\com}{\mathsf{C}}
		
		\newcommand{\Exang}[1]{{#1}\operatorname{-\,\mathsf{Exang}}\nolimits}
		\newcommand{\exang}[1]{{#1}\operatorname{-\,\mathsf{exang}}\nolimits}
		\newcommand{\ICExang}[1]{\operatorname{\mathsf{IC-}}{#1}\operatorname{-\,\mathsf{Exang}}\nolimits}
		\newcommand{\ICexang}[1]{\operatorname{\mathsf{IC-}}{#1}\operatorname{-\,\mathsf{exang}}\nolimits}
            \newcommand{\WICExang}[1]{\operatorname{\mathsf{WIC-}}{#1}\operatorname{-\,\mathsf{Exang}}\nolimits}





		\let\amsamp=&
		

	\newcommand{\deff}{\coloneqq}
	
	\newcommand{\lan}{\langle}
	\newcommand{\ran}{\rangle}

		\newcommand\rwt[1]{\ThisStyle{%
  \setbox0=\hbox{$\SavedStyle#1$}%
  \stackengine{.2\LMpt}{$\SavedStyle#1$}{%
    \stretchto{\scaleto{\SavedStyle\mkern-.3mu\sim}{.5\wd0}}{.3\ht0}%
  }{O}{c}{F}{T}{S}%
}}
\DeclareSymbolFont{yhlargesymbols}{OMX}{yhex}{m}{n} \DeclareMathAccent{\rwh}{\mathord}{yhlargesymbols}{"62}


\newcommand{\id}[1]{\tensor*[]{\mathrm{id}}{_{#1}}}

\newcommand{\zerou}[1]{\tensor*[]{0}{^{#1}}}

\newcommand{\zerod}[1]{\tensor*[]{0}{_{#1}}}

\newcommand{\ad}[1]{\tensor*[]{a}{_{#1}}}

\newcommand{\cu}[1]{\tensor*[]{c}{^{#1}}}

\newcommand{\du}[1]{\tensor*[]{d}{^{#1}}}

\newcommand{\dd}[1]{\tensor*[]{d}{_{#1}}}

\newcommand{\ed}[1]{\tensor*[]{e}{_{#1}}}

\newcommand{\fd}[1]{\tensor*[]{f}{_{#1}}}

\newcommand{\Mat}[1]{\tensor*[]{\mathrm{Mat}}{_{#1}}}

\newcommand{\rd}[1]{\tensor*[]{r}{_{#1}}}

\renewcommand{\ss}[1]{\tensor*[]{s}{_{#1}}}

\newcommand{\vu}[1]{\tensor*[]{v}{^{#1}}}

\newcommand{\CCu}[1]{\tensor*[]{\CC}{^{#1}}}

\newcommand{\CDu}[1]{\tensor*[]{\CD}{^{#1}}}

\newcommand{\rwtCCu}[1]{\tensor*[]{\rwt{\CC}}{^{#1}}}

\newcommand{\rwhCCu}[1]{\tensor*[]{\rwh{\CC}}{^{#1}}}

\newcommand{\SLd}[1]{\tensor*[]{\SL}{_{#1}}}

\newcommand{\SKd}[1]{\tensor*[]{\SK}{_{#1}}}

\newcommand{\SId}[1]{\tensor*[]{\SI}{_{#1}}}

\newcommand{\SFd}[1]{\tensor*[]{\SF}{_{#1}}}

\newcommand{\SEd}[1]{\tensor*[]{\SE}{_{#1}}}

\newcommand{\rwtSFd}[1]{\tensor*[]{\rwt{\SF}}{_{#1}}}

\newcommand{\rwhSFd}[1]{\tensor*[]{\rwh{\SF}}{_{#1}}}

\newcommand{\SFu}[1]{\tensor*[]{\SF}{^{#1}}}

\newcommand{\Xd}[1]{\tensor*[]{X}{_{#1}}}

\newcommand{\rwhXd}[1]{\tensor*[]{\rwh{X}}{_{#1}}}

\newcommand{\Xcom}{\tensor*[]{X}{_{\bullet}}}

\newcommand{\Yd}[1]{\tensor*[]{Y}{_{#1}}}

\newcommand{\Endd}[1]{\tensor*[]{\End}{_{#1}}}

\newcommand{\Bethd}[1]{\tensor*[]{\Beth}{_{#1}}}

\newcommand{\rwtBethd}[1]{\tensor*[]{\rwt{\Beth}}{_{#1}}}

\newcommand{\Dalethd}[1]{\tensor*[]{\Daleth}{_{#1}}}

\newcommand{\Shind}[1]{\tensor*[]{\Shin}{_{#1}}}

\newcommand{\Tsadid}[1]{\tensor*[]{\Tsadi}{_{#1}}}

\newcommand{\Memd}[1]{\tensor*[]{\Mem}{_{#1}}}

\newcommand{\Shindp}[1]{\tensor*[]{\Shin}{_{#1}^{\,\prime}}}

\newcommand{\Tsadidp}[1]{\tensor*[]{\Tsadi}{_{#1}^{\,\prime}}}

\newcommand{\CXd}[1]{\tensor*[]{\CX}{_{#1}}}

\newcommand{\rwhCXd}[1]{\tensor*[]{\rwh{\CX}}{_{#1}}}

\newcommand{\comd}[1]{\tensor*[]{\com}{_{#1}}}

\newcommand{\exactcompletecell}[1]{\tensor*[]{\exactcomplete}{_{#1}}}

\newcommand{\nexangcompletecell}[1]{\tensor*[]{\nexangcomplete}{_{#1}}}

\newcommand{\Exangcell}[2]{\tensor*[]{\Exang{#1}}{_{#2}}}

\newcommand{\ICExangcell}[2]{\tensor*[]{\ICExang{#1}}{_{#2}}}

\newcommand{\WICExangcell}[2]{\tensor*[]{\WICExang{#1}}{_{#2}}}

\newcommand{\davecell}[1]{\tensor*[]{\updave}{_{#1}}}

\newcommand{\rwtdavecell}[1]{\tensor*[]{\rwt{\updave}}{_{#1}}}

\newcommand{\rwhdavecell}[1]{\tensor*[]{\rwh{\updave}}{_{#1}}}

\newcommand{\exactWICcell}[1]{\tensor*[]{\exactWIC}{_{#1}}}

\newcommand{\nexangWICcell}[1]{\tensor*[]{\nexangWIC}{_{#1}}}

\newcommand{\Exactcatcell}[1]{\tensor*[]{\Exactcat}{_{#1}}}

\newcommand{\ICExactcatcell}[1]{\tensor*[]{\ICExactcat}{_{#1}}}

\newcommand{\WICExactcatcell}[1]{\tensor*[]{\WICExactcat}{_{#1}}}

\newcommand{\Setd}[2]{\tensor*[]{\Set{#1}}{_{#2}}}

\newcommand{\Gammad}[1]{\tensor*[]{\Gamma}{_{#1}}}

\newcommand{\rwtGammad}[1]{\tensor*[]{\rwt{\Gamma}}{_{#1}}}

\newcommand{\rwhGammad}[1]{\tensor*[]{\rwh{\Gamma}}{_{#1}}}

\newcommand{\Deltad}[1]{\tensor*[]{\Delta}{_{#1}}}

\newcommand{\Thetad}[1]{\tensor*[]{\Theta}{_{#1}}}

\newcommand{\Phid}[1]{\tensor*[]{\Phi}{_{#1}}}

\newcommand{\curlbraced}[1]{\tensor*[]{\}}{_{#1}}}

\newcommand{\braced}[1]{\tensor*[]{)}{_{#1}}}

\newcommand{\braceu}[1]{\tensor*[]{)}{^{#1}}}

\newcommand{\vertcomp}{\tensor*[]{\circ}{_{v}}}

\newcommand{\horicomp}{\tensor*[]{\circ}{_{h}}}


\makeatletter

	\renewcommand{\andify}{%
		\nxandlist{\unskip, }{\unskip{} \@@and~}{\unskip \penalty-2 \space \@@and~}}

	    \newenvironment{acknowledgements}{%
	    \renewcommand\abstractname{\textbf{Acknowledgements}}
\global\setbox\abstractbox=\vtop \bgroup
\normalfont\Small
\list{}{\labelwidth\z@
\leftmargin3pc \rightmargin\leftmargin
\listparindent\normalparindent \itemindent\z@
\parsep\z@ \@plus\p@

}%
\item[\hskip\labelsep\scshape\abstractname.]%
}{%
\endlist\egroup
\ifx\@setabstract\relax \@setabstracta \fi
}



 \def\@setaddresses{\par
     \nobreak \begingroup
     \setstretch{1} 
     \setlength{\parindent}{0cm}
     \footnotesize
     \def\author##1{\nobreak\addvspace\bigskipamount}%
     \def\\{\unskip, \ignorespaces}%
     \interlinepenalty\@M
     \def\address##1##2{\begingroup
         \par\addvspace\bigskipamount
     \@ifnotempty{##1}{(\ignorespaces##1\unskip) }%
     {\scshape\ignorespaces##2}\par\endgroup}%
     \def\curraddr##1##2{\begingroup
     \@ifnotempty{##2}{\nobreak\curraddrname
     \@ifnotempty{##1}{, \ignorespaces##1\unskip}\/:\space
     ##2\par}\endgroup}%
     \def\email##1##2{\begingroup
     \@ifnotempty{##2}{\nobreak\emailaddrname
     \@ifnotempty{##1}{, \ignorespaces##1\unskip}\/:\space
     \ttfamily##2\par}\endgroup}%
     \def\urladdr##1##2{\begingroup
     \def~{\char'\~}%
     \@ifnotempty{##2}{\nobreak\urladdrname
     \@ifnotempty{##1}{, \ignorespaces##1\unskip}\/:\space
     \ttfamily##2\par}\endgroup}%
     \addresses
     \endgroup
 }

\let\oldtocsection=\tocsection
\let\oldtocsubsection=\tocsubsection

\renewcommand{\tocsection}[2]{\hspace{0em}\vspace*{1pt}\oldtocsection{#1}{#2}}

    \renewcommand{\tocsubsection}[2]{\hspace{21pt}\vspace*{0pt}\oldtocsubsection{#1}{#2}}

\makeatother

\allowdisplaybreaks 


\begin{document}

\title{The category of extensions and idempotent completion}

    \author[Bennett-Tennenhaus]{Raphael Bennett-Tennenhaus}
        \address{
		Department of Mathematics\\
		Aarhus University\\
		8000 Aarhus C\\
		Denmark}
        \email{raphaelbennetttennenhaus@gmail.com}
        
    \author[Haugland]{Johanne Haugland}
        \address{Department of Mathematical Sciences\\ 
        NTNU\\ 
        NO-7491 Trondheim\\ 
        Norway}
        \email{johanne.haugland@ntnu.no}

    \author[Sand\o y]{Mads Hustad Sand\o y}
        \address{Department of Mathematical Sciences\\ 
        NTNU\\ 
        NO-7491 Trondheim\\ 
        Norway}
        \email{mads.sandoy@ntnu.no}
        
    \author[Shah]{Amit Shah}
        \address{
		Department of Mathematics\\
		Aarhus University\\
		8000 Aarhus C\\
		Denmark}
        \email{amit.shah@math.au.dk}
\date{\today}
\keywords{
Category of extensions, additive category, biadditive functor,
exact category, 
idempotent completion, 
$n$-exangulated category, 
$n$-exangulated functor, 
$n$-exangulated natural transformation,
$2$-category,
$2$-functor}
\subjclass[2020]{18E05, 18E10, 18G80, 18N10, 18G99}

{\setstretch{1}
\begin{abstract}
Building on previous work, we study the splitting of idempotents in the category of extensions $\mathbb{E}\operatorname{-Ext}(\mathcal{C})$ associated to a pair $(\mathcal{C},\mathbb{E})$ of an additive category and a biadditive functor to the category of abelian groups. In particular, we show that idempotents split in \mbox{$\mathbb{E}\operatorname{-Ext}(\mathcal{C})$} whenever they do so in $\mathcal{C}$, allowing us to prove that idempotent completions and extension categories are compatible constructions in a \mbox{$2$-category}-theoretic sense. Furthermore, we show that the exact category obtained by first taking the idempotent completion of an $n$-exangulated category $(\mathcal{C},\mathbb{E},\mathfrak{s})$, in the sense of Klapproth--Msapato--Shah, and then considering its category of extensions is equivalent to the exact category obtained by first passing to the extension category and then taking the idempotent completion. These two different approaches yield a pair of $2$-functors each taking small $n$-exangulated categories to small idempotent complete exact categories. The collection of equivalences that we provide constitutes a $2$-natural transformation between these $2$-functors. Similar results with no smallness assumptions and regarding weak idempotent completions are also proved.
\end{abstract}
}


\maketitle

\setcounter{tocdepth}{1}
\vspace{-0.5cm}
{\setstretch{1}
\tableofcontents
}
\vspace{-1cm}


\section{Introduction}
\label{sec:introduction}
An additive category is called \emph{idempotent complete} given that every idempotent morphism \emph{splits} (see \cref{defn:split-idempotents}), or equivalently if every idempotent admits a kernel (see e.g.\ \cite[Prop.~3.10]{Shah-Krull-Remak-Schmidt}). The study of idempotent complete categories dates back to work by Karoubi \cite{Karoubi-algebres-de-Clifford-et-K-theorie}, in which it was shown that an additive category $\CC$ can be naturally embedded into an idempotent complete category $\rwt{\CC}$, often called its \emph{Karoubi envelope} or its \emph{idempotent completion} (see \cref{defn:karoubi-envelope} and \cref{prop:inclusion-functor-into-Karoubi-envelope-is-ff-exact}).

The splitting of idempotents plays an important role in contemporary algebraic geometry, homological algebra, representation theory and category theory. Indeed, it is intimately connected to the \emph{Krull--Remak--Schmidt property} and \emph{Krull--Schmidt categories} (see \cite[Cor.~A.2]{ChenYeZhang-Algebras-of-derived-dimension-zero}, \cite[Cor.~4.4]{Krause-KS-cats-and-projective-covers}).  Krull--Schmidt categories constitute a particularly nice class of examples of idempotent complete categories. In such a category, every object decomposes, essentially uniquely, into a finite direct sum of indecomposable objects with local endomorphism rings. Splitting of idempotents is often a crucial standing assumption when approaching representation theory of finite-dimensional algebras from a categorical or geometrical perspective (see e.g.\ \cite{Atiyah-KS-theorem-with-apps-to-sheaves,Auslander-Rep-theory-of-Artin-algebras-I,GabrielRoiter-reps-of-finite-dimensional-algebras,Haugland-the-grothendieck-group-of-an-n-exangulated-category,Haugland-auslander-reiten-triangles-and-grothendieck-groups-of-triangulated-categories,Jorgensen-abelian-subcategories-of-triangulated-categories-induced-by-simple-minded-systems,Krause-KS-cats-and-projective-covers}). Furthermore, a generalisation of the Krull--Remak--Schmidt property was given by Azumaya \cite[Thm.~1]{Azumaya-on-generalised-semiprimary-rings}, which has since been used in topological data analysis in the study of persistence homology (see e.g.\ \cite{Botnan-Crawley-Boevey-decomposition-of-persistence-modules}).

Abelian, or more generally exact, and triangulated categories appear in various areas of mathematics, including functional analysis and mathematical physics, and are of fundamental interest in representation theory and related areas (see e.g.\ \cite{Homological-mirror-symmetry-lecture-notes-in-physics,krause,ProsmansSchneiders-a-homological-study-of-bornological-spaces}). Recently, Nakaoka--Palu introduced \emph{\mbox{extriangulated} categories} as a simultaneous generalisation of exact and triangulated \mbox{categories} \cite{NakaokaPalu-extriangulated-categories-hovey-twin-cotorsion-pairs-and-model-structures}, and showed that extension-closed subcategories of triangulated categories, which may fail to be triangulated subcategories, carry an extriangulated structure. Herschend--Liu--Nakaoka \cite{HerschendLiuNakaoka-n-exangulated-categories-I-definitions-and-fundamental-properties} then introduced \emph{\mbox{$n$-exangulated} categories} as a higher-dimensional analogue of extriangulated categories in the context of higher homological algebra. An $n$-exangulated category for an integer $n \geq 1$ is a triplet $(\CC,\BE,\fs)$ consisting of an additive category $\CC$, a biadditive functor \mbox{$\BE\colon \CCu{\op}\times\CC \to \Ab$} (where $\Ab$ denotes the category of abelian groups), and a \emph{realisation} $\fs$ of $\BE$. Note that a category is $1$-exangulated if and only if it is extriangulated \cite[Prop.~4.3]{HerschendLiuNakaoka-n-exangulated-categories-I-definitions-and-fundamental-properties}. Important classes of examples of $n$-exangulated categories for higher $n$ include $n$-exact categories \cite{Jasso-n-abelian-and-n-exact-categories} and $(n+2)$-angulated categories \cite{GeissKellerOppermann-n-angulated-categories}. 

Suppose that $(\CC,\BE,\fs)$ is an $n$-exangulated category. For each pair of objects \mbox{$A,C\inn\CC$}, elements of $\BE(C,A)$ are called \emph{$\BE$-extensions}. The \emph{category of extensions} associated to $(\CC,\BE,\fs)$, denoted by $\catext{\BE}{\CC}$, has all $\BE$-extensions as its objects, and the morphisms are morphisms of $\BE$-extensions; see Subsection~\ref{subsec:cat-of-extensions}. In a previous article, the authors showed that this category can be equipped with a natural exact structure $\CXd{\BE}$, giving rise to an exact category \mbox{$(\catext{\BE}{\CC},\CXd{\BE})$}; see \cite[Prop.~3.2]{Bennett-TennenhausHauglandSandoyShah-the-category-of-extensions-and-a-characterisation-of-n-exangulated-functors}. Moreover, we demonstrated that $\catext{\BE}{\CC}$ encodes important structural information. As an example, this perspective leads to a full characterisation of $n$-exangulated functors between $n$-exangulated categories; see \cite[Thm.~A]{Bennett-TennenhausHauglandSandoyShah-the-category-of-extensions-and-a-characterisation-of-n-exangulated-functors}. In the present paper, we improve the understanding of the relationship between an $n$-exangulated category and its associated category of extensions by studying the splitting of idempotents. Note that our results in \cref{sec:cat-of-extensions-and-splitting-idempotents}, and in particular \cref{prop-A} and \cref{thm-B} below, hold more generally for any pair $(\CC,\BE)$ consisting of an additive category $\CC$ and a biadditive functor \mbox{$\BE\colon \CCu{\op}\times\CC \to \Ab$}.

\cref{prop-A} asserts that the splitting of idempotents in $\catext{\BE}{\CC}$ is inherited from $\CC$. This is our first main result, and it plays an important role in the paper and is a key step in the formulation of \cref{thm-C}.

\begin{propx}[See \cref{prop:idempotents-split-category-of-extensions}] \label{prop-A} 
If $\CC$ is idempotent complete, then $\catext{\BE}{\CC}$ is also idempotent complete.
\end{propx}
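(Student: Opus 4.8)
The plan is to split an arbitrary idempotent of $\catext{\BE}{\CC}$ by hand, reducing the task to splitting a pair of idempotents in $\CC$ and then building the intermediate object as a suitable push-forward--pull-back of the given $\BE$-extension.

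First I would observe that composition in $\catext{\BE}{\CC}$ is computed componentwise, so that an idempotent endomorphism of an object $\delta \inn \BE(C,A)$ is precisely a pair $(a,c)$ of idempotents $a\colon A\to A$ and $c\colon C\to C$ in $\CC$ satisfying the compatibility condition $a_*\delta = c^*\delta$ that makes $(a,c)$ a morphism of $\BE$-extensions. As $\CC$ is idempotent complete, I can split $a$ and $c$ in $\CC$: there are objects $A_0, C_0 \inn \CC$ together with morphisms $A \xrightarrow{p} A_0 \xrightarrow{i} A$ and $C \xrightarrow{q} C_0 \xrightarrow{j} C$ with $ip = a$, $pi = \id{A_0}$, $jq = c$ and $qj = \id{C_0}$.

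The crux of the construction is to take $\delta_0 \deff \BE(j,p)(\delta) = p_* j^* \delta \inn \BE(C_0,A_0)$ as the object through which $(a,c)$ should factor, and to propose $(p,q)\colon \delta \to \delta_0$ as retraction and $(i,j)\colon \delta_0 \to \delta$ as section. Granting that these pairs really are morphisms in $\catext{\BE}{\CC}$, the two splitting identities follow formally from componentwise composition together with the relations above, namely $(i,j)\circ(p,q) = (ip,jq) = (a,c)$ and $(p,q)\circ(i,j) = (pi,qj) = (\id{A_0},\id{C_0}) = \id{\delta_0}$.

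The only real content, and hence the main obstacle, is verifying that $(p,q)$ and $(i,j)$ genuinely satisfy the defining equations of morphisms of $\BE$-extensions, that is, $p_*\delta = q^*\delta_0$ and $i_*\delta_0 = j^*\delta$. For these I would exploit the bifunctoriality of $\BE$ to commute push-forwards in the second variable past pull-backs in the first, unfold $\delta_0$, and then feed in the idempotent compatibility $a_*\delta = c^*\delta$ together with the consequences $cj = j(qj) = j$ and $pa = (pi)p = p$ of the splittings. For instance, $i_*\delta_0 = (ip)_* j^*\delta = a_* j^*\delta = j^* a_*\delta = j^* c^*\delta = (cj)^*\delta = j^*\delta$, and a symmetric computation yields $p_*\delta = q^*\delta_0$. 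Once both equalities are in place, $\delta_0$ together with $(p,q)$ and $(i,j)$ exhibits the chosen idempotent as split; as it was arbitrary, $\catext{\BE}{\CC}$ is idempotent complete. It is worth noting that the realisation $\fs$ is never invoked, in agreement with the remark that the statement holds for any pair $(\CC,\BE)$.
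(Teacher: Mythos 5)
Your proposal is correct and follows essentially the same route as the paper's own proof: there, the two component idempotents are split in $\CC$ and the intermediate object is taken to be exactly the same pushforward--pullback extension (the paper's $r_{*}v^{*}\alpha$ is your $p_{*}j^{*}\delta$), with the section and retraction pairs and the bifunctoriality computation verifying the morphism-of-extensions conditions matching yours step for step. The only differences are notational, so nothing needs to be changed.
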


As a consequence of \cref{prop-A}, we obtain that given certain finiteness assumptions on $\CC$, the Krull--Remak--Schmidt property for $\CC$ implies the same property for $\catext{\BE}{\CC}$; see \cref{cor-KRS-property}, cf.\ \cite[p.~670]{DraxlerReitenSmaloSolberg-exact-categories-and-vector-space-categories}, \cite[p.~335]{GabrielNazarovaRoiterSergeichukVossieck-tame-and-wild-subspace-problems}.

It is shown in \cite{KlapprothMsapatoShah-Idempotent-completions-of-n-exangulated-categories} that if $(\CC,\BE,\fs)$ is an $n$-exangulated category, then the idempotent completion $\rwt{\CC}$ of $\CC$ admits an $n$-exangulated structure $(\rwt{\CC},\rwt{\BE},\rwt{\fs}\mspace{1mu})$; see \cref{sec:idempotent-completion-n-exangulated-cats}. Our second main result, given as \cref{thm-B} below, demonstrates that idempotent completions and extension categories are compatible constructions. More precisely, the category obtained by first taking the idempotent completion and then considering its category of extensions is equivalent to first passing to the extension category and then taking the idempotent completion. 

\begin{thmx}[See \cref{thm:cat-of-extensions-of-IC-isomorphic-to-IC-of-cat-of-extensions}] \label{thm-B} 
The category $\catext{\rwt{\BE}}{\rwt{\CC}}$ 
is equivalent to the idempotent completion of the category $\catext{\BE}{\CC}$.
\end{thmx}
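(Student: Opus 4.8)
The plan is to realise $\catext{\rwt{\BE}}{\rwt{\CC}}$ as the idempotent completion of $\catext{\BE}{\CC}$ by verifying the standard characterisation of Karoubi envelopes (see \cref{defn:karoubi-envelope}): if $F\colon \CA \to \CB$ is a fully faithful additive functor into an idempotent complete additive category $\CB$ such that every object of $\CB$ is a direct summand of some $F(A)$ with $A \inn \CA$, then $F$ exhibits $\CB$ as $\rwt{\CA}$. So it suffices to construct an additive functor $F\colon \catext{\BE}{\CC} \to \catext{\rwt{\BE}}{\rwt{\CC}}$ and check the three conditions. Throughout I write $a_\ast \deff \BE(C,a)$ and $c^\ast \deff \BE(c,A)$ for the induced maps, and likewise for $\rwt{\BE}$.

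First I would note that the target is idempotent complete: since the Karoubi envelope $\rwt{\CC}$ is idempotent complete and $\rwt{\BE}\colon \rwt{\CC}^{\op}\times\rwt{\CC}\to\Ab$ is biadditive, \cref{prop-A} applied to the pair $(\rwt{\CC},\rwt{\BE})$ gives that $\catext{\rwt{\BE}}{\rwt{\CC}}$ is idempotent complete. Next I would build $F$ from the inclusion $\iota_\CC\colon \CC\to\rwt{\CC}$, $X\mapsto(X,\id{X})$, of \cref{prop:inclusion-functor-into-Karoubi-envelope-is-ff-exact}. Since $(\id{A})_\ast(\id{C})^\ast = \id{}$, the construction of $\rwt{\BE}$ in \cref{sec:idempotent-completion-n-exangulated-cats} identifies $\rwt{\BE}(\iota_\CC C,\iota_\CC A)$ with $\BE(C,A)$ compatibly with the induced maps; hence sending $\delta\inn\BE(C,A)$ to $\delta\inn\rwt{\BE}((C,\id{C}),(A,\id{A}))$, and a morphism $(a,c)$ to the same pair viewed in $\rwt{\CC}$, defines an additive functor $F$. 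Full faithfulness is then immediate: a morphism $F(\delta)\to F(\delta')$ consists of morphisms of $\rwt{\CC}$ between objects in the image of the fully faithful $\iota_\CC$, so it lifts uniquely to $\CC$, and the morphism-of-extensions compatibility $a_\ast\delta = c^\ast\delta'$ transports verbatim under the identification above.

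The main step, which I expect to be the crux, is that every object of $\catext{\rwt{\BE}}{\rwt{\CC}}$ is a direct summand of one in the image of $F$. Here I would use that the construction of \cref{sec:idempotent-completion-n-exangulated-cats} realises $\rwt{\BE}((C,e),(A,f))$ as the direct summand of $\BE(C,A)$ cut out by the idempotent $f_\ast e^\ast$; thus an object $\delta\inn\rwt{\BE}((C,e),(A,f))$ is an $\BE$-extension $\delta\inn\BE(C,A)$ with $f_\ast\delta = \delta = e^\ast\delta$. Setting $\delta'\deff\delta\inn\rwt{\BE}((C,\id{C}),(A,\id{A})) = F(\delta)$, I would check that the pair $\iota \deff (f,e)$, with $f\colon(A,f)\to(A,\id{A})$ and $e\colon(C,e)\to(C,\id{C})$ in $\rwt{\CC}$, is a morphism $\delta\to\delta'$, and that its reverse $\pi\deff(f,e)$, with $f\colon(A,\id{A})\to(A,f)$ and $e\colon(C,\id{C})\to(C,e)$, is a morphism $\delta'\to\delta$. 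The defining identities $f_\ast\delta = \delta = e^\ast\delta$ are precisely what makes both compatibility conditions hold. Since $f\circ f = f = \id{(A,f)}$ and $e\circ e = e = \id{(C,e)}$ in $\rwt{\CC}$, the composite $\pi\circ\iota$ equals $\id{}_\delta$, so $\delta$ is a retract of $F(\delta')$; by idempotent completeness of the target this retract is a genuine direct summand.

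Combining the three verified conditions with the characterisation above yields the desired equivalence $\catext{\rwt{\BE}}{\rwt{\CC}} \iso \rwt{\catext{\BE}{\CC}}$. The only genuinely delicate point is the main step, where one must pin down the precise description of $\rwt{\BE}((C,e),(A,f))$ as the $f_\ast e^\ast$-summand of $\BE(C,A)$ and track the push/pull maps under $F$; once that bookkeeping is in place, the section–retraction data $(\iota,\pi)$ built from $e$ and $f$ makes the argument essentially formal. The remaining verifications (additivity of $F$, and that $\iota,\pi$ genuinely satisfy the extension-morphism axiom in $\rwt{\BE}$) are routine.
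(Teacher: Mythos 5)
Your proposal is correct, but it takes a genuinely different route from the paper. The paper proves \cref{thm:cat-of-extensions-of-IC-isomorphic-to-IC-of-cat-of-extensions} by writing down an explicit functor $\Shind{(\CC,\BE)}\colon \catext{\rwt{\BE}}{\rwt{\CC}} \to \rwt{\catext{\BE}{\CC}}$, $(\ed{A},\alpha,\ed{C})\mapsto(\alpha,(\ed{A},\ed{C}))$, together with an explicit quasi-inverse $\Tsadid{(\CC,\BE)}$, $(\alpha,(\ed{A},\ed{C}))\mapsto(\ed{A},(\ed{A}\braced{\BE}\alpha,\ed{C})$, checking the two composites directly and then verifying (at some length) that both functors are exact. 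You instead verify the universal characterisation of the Karoubi envelope: the target $\catext{\rwt{\BE}}{\rwt{\CC}}$ is idempotent complete by \cref{prop-A} applied to $(\rwt{\CC},\rwt{\BE})$ (this is exactly \cref{cor:catex-of-tilde-C-is-IC} in the paper); the inclusion-induced functor $F$ is fully faithful; and every object $(\ed{A},\alpha,\ed{C})$ is a retract of $F(\alpha)=(\id{A},\alpha,\id{C})$ via the section--retraction pair built from $\ed{A},\ed{C}$, which is a correct and clean computation given the description of morphisms in $\catext{\rwt{\BE}}{\rwt{\CC}}$. Two remarks on what each approach buys. Your argument is shorter and more conceptual for the bare statement of \cref{thm-B}, but it yields the equivalence only abstractly (via the $2$-universal property, which is not literally contained in \cref{defn:karoubi-envelope} as your parenthetical suggests, though it does follow from \cite[Prop.~6.10]{Buhler-exact-categories} cited in the paper by a standard splitting-of-idempotents argument); it also says nothing about exact structures, whereas the referenced theorem asserts an \emph{exact} equivalence $(\catext{\rwt{\BE}}{\rwt{\CC}},\CXd{\rwt{\BE}})\simeq(\rwt{\catext{\BE}{\CC}},\rwt{\CXd{\BE}})$. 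The paper's explicit functor $\Shind{(\CC,\BE)}$ is moreover reused later as the component of the $2$-natural transformation $\Shin$ in \cref{thm:commutative-square} and \cref{cor:commutativity-of-2-square}, so the explicitness is not incidental: to recover those results from your approach one would still have to identify the abstract equivalence with a concrete functor and check its exactness and naturality, which is essentially the work the paper does.
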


\cref{prop-A} and \cref{thm-B} are both used in order to obtain the $2$-category-theoretic result \cref{thm-C}, which builds a bridge between the $2$-categorical framework established in \cite{Bennett-TennenhausHauglandSandoyShah-the-category-of-extensions-and-a-characterisation-of-n-exangulated-functors} and the results on the idempotent completion of an $n$-exangulated category from \cite{KlapprothMsapatoShah-Idempotent-completions-of-n-exangulated-categories}. To discuss a $2$-category of $n$-exangulated categories, we use notions of morphisms between $n$-exangulated categories and of morphisms between such morphisms. Structure-preserving functors between $n$-exangulated categories as introduced in \cite{Bennett-TennenhausShah-transport-of-structure-in-higher-homological-algebra} are known as \emph{$n$-exangulated functors}. We viewed the theory of $n$-exangulated categories from a $2$-categorical perspective in \cite{Bennett-TennenhausHauglandSandoyShah-the-category-of-extensions-and-a-characterisation-of-n-exangulated-functors} by defining \emph{$n$-exangulated natural transformations} between $n$-exangulated functors and establishing the $2$-category $\exang{n}$ of small $n$-exangulated categories \cite[Cor.~4.15]{Bennett-TennenhausHauglandSandoyShah-the-category-of-extensions-and-a-characterisation-of-n-exangulated-functors}. Furthermore, we constructed a $2$-functor \mbox{$\updave \colon \exang{n} \to \exactcat$} to the category of small exact categories \cite[Thm.~D]{Bennett-TennenhausHauglandSandoyShah-the-category-of-extensions-and-a-characterisation-of-n-exangulated-functors}, which sends a $0$-cell $(\CC,\BE,\fs)$ in $\exang{n}$ to the $0$-cell \mbox{$(\catext{\BE}{\CC},\CXd{\BE})$} in $\exactcat$; see \cref{def:updave}. A consequence of \cref{prop-A} is that $\updave$ restricts to a $2$-functor \mbox{$\rwt{\updave}\colon \ICexang{n} \to \ICexactcat$} from the $2$-category of small idempotent complete $n$-exangulated categories to the $2$-category of small idempotent complete exact categories. The last observation needed in order to state \cref{thm-C} is that taking idempotent completions yields $2$-functors \mbox{$\exactcomplete\colon \exactcat \to \ICexactcat$} and \mbox{$\nexangcomplete\colon\exang{n}\to\ICexang{n}$}; see \cref{thm:2-functor-of-exact-categories} and \cref{thm:2-functor-of-n-exangulated-categories}, respectively. 

\begin{thmx}[See \cref{cor:commutativity-of-2-square}] \label{thm-C}
Consider the diagram  
\[
\begin{tikzcd}
\exang{n} 
    \arrow{r}{\updave}
    \arrow{d}[swap]{\nexangcomplete} 
& \exactcat 
    \arrow{d}{\exactcomplete}
    \\
\ICexang{n} 
    \arrow{r}{\rwt{\updave}}
& \ICexactcat
\end{tikzcd}
\]
of $2$-categories and $2$-functors. There is a $2$-natural transformation \mbox{$\rwt{\updave}\nexangcomplete \Rightarrow \exactcomplete\updave$} consisting of exact equivalences.
\end{thmx}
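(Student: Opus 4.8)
The plan is to take as the component of the desired $2$-natural transformation at a $0$-cell $(\CC,\BE,\fs)$ of $\exang{n}$ precisely the exact equivalence
\[
\eta_{(\CC,\BE,\fs)} \colon \rwt{\updave}\nexangcomplete(\CC,\BE,\fs) \longrightarrow \exactcomplete\updave(\CC,\BE,\fs)
\]
supplied by \cref{thm-B}, where the source is $(\catext{\rwt{\BE}}{\rwt{\CC}},\CXd{\rwt{\BE}})$ and the target is the idempotent completion of $(\catext{\BE}{\CC},\CXd{\BE})$. Since \cref{thm-B} already guarantees that each $\eta_{(\CC,\BE,\fs)}$ is an exact equivalence, the entire remaining content is the $2$-naturality of the assignment $(\CC,\BE,\fs)\mapsto\eta_{(\CC,\BE,\fs)}$. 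Before verifying it, I would record the explicit behaviour of the four $2$-functors on higher cells: $\updave$ sends an $n$-exangulated functor $(F,\Gamma)$ to the induced exact functor on extension categories and an $n$-exangulated natural transformation to its induced transformation; $\nexangcomplete$ sends $(F,\Gamma)$ to the induced $n$-exangulated functor on idempotent completions and acts on $2$-cells by the standard componentwise formula; $\exactcomplete$ extends an exact functor and an exact natural transformation to the idempotent completions by the same componentwise recipe; and $\rwt{\updave}$ is the restriction of $\updave$. I would also extract from the proof of \cref{thm-B} the explicit description of $\eta_{(\CC,\BE,\fs)}$ on objects and morphisms.

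The main step is the $1$-cell naturality square. Fixing a $1$-cell $(F,\Gamma)\colon(\CC,\BE,\fs)\to(\CD,\BF,\ft)$, I would evaluate both composites
\[
\exactcomplete\updave(F,\Gamma)\circ\eta_{(\CC,\BE,\fs)}
\quad\text{and}\quad
\eta_{(\CD,\BF,\ft)}\circ\rwt{\updave}\nexangcomplete(F,\Gamma)
\]
on an arbitrary object and on an arbitrary morphism of $\catext{\rwt{\BE}}{\rwt{\CC}}$, and check that the two resulting data in the idempotent completion of $\catext{\BF}{\CD}$ coincide on the nose. Unwinding the definitions, both legs first apply $F$ and $\Gamma$ to the underlying $\BE$-extension and its structural morphisms, and then record the idempotent data in the target; the crucial observation is that the operations ``complete, then pass to extensions'' and ``pass to extensions, then complete'' act by the same underlying formula on morphisms, so the square commutes strictly rather than merely up to natural isomorphism. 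This reduces to a direct comparison using the explicit descriptions gathered in the first paragraph.

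Finally, for the $2$-cell coherence I would take an arbitrary $n$-exangulated natural transformation $\alpha$ between $1$-cells $(\CC,\BE,\fs)\to(\CD,\BF,\ft)$ and verify the whiskering identity
\[
\exactcomplete\updave(\alpha)\ast\eta_{(\CC,\BE,\fs)}=\eta_{(\CD,\BF,\ft)}\ast\rwt{\updave}\nexangcomplete(\alpha).
\]
Because $\exactcomplete$ and $\nexangcomplete$ act on $2$-cells componentwise and $\updave$ and $\rwt{\updave}$ are given by the same recipe, both whiskerings are computed by the identical componentwise formula; hence the identity holds, and this step is essentially formal once the descriptions from the first paragraph are in place. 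I expect the main obstacle to be the strict commutativity of the $1$-cell naturality square: it is natural to fear that the square commutes only up to a coherent isomorphism, and one must check that the canonical choices built into $\nexangcomplete$, into $\exactcomplete$, and into the equivalence of \cref{thm-B} are mutually compatible enough to yield an equality of $1$-cells in $\ICexactcat$. Pinning down this strictness, by matching the explicit object- and morphism-level formulas of the two composites, is where the genuine verification lies; the remaining conditions are then bookkeeping.
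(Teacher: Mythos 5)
Your proposal is correct and follows essentially the same route as the paper: the components are exactly the equivalences $\Shind{(\CC,\BE)}$ of \cref{thm-B}, the $1$-cell naturality square is verified as a strict equality of functors by comparing both composites on objects and morphisms (this is \cref{thm:commutative-square}), and the whiskering identity for $2$-cells is checked componentwise (this is \cref{prop:IC-2-natural}), after which one restricts to small categories. The only caveat is that the $2$-cell step is slightly less ``formal'' than you suggest, since it amounts to checking that the completion of the induced transformation $\rwt{\lan\Beth\ran}$ agrees with the induced transformation of the completion $\lan\rwt{\Beth}\ran$ — but this is precisely the componentwise computation your plan calls for.
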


Similar results as above hold also for weak idempotent completions; see \cref{sec:weak-idempotent-completion}. We remark that even though \cref{prop:idempotents-split-category-of-extensions-WIC} is an expected analogue of \cref{prop-A} in this setup, the method of proof is different and relies on a previous result of the authors from \cite{Bennett-TennenhausHauglandSandoyShah-the-category-of-extensions-and-a-characterisation-of-n-exangulated-functors}.

\begin{rem}
\label{rem:set-theory-remark}
We note that \cref{thm-C} follows from a more general result, namely \cref{thm:commutative-square}, in which no smallness assumption is required. In this article the term `category' does not require the collections of morphisms to form sets. In other words, the categories we consider need not be \emph{locally small}. Just as explained in \cite[Rem.~4.13]{Bennett-TennenhausHauglandSandoyShah-the-category-of-extensions-and-a-characterisation-of-n-exangulated-functors}, the reason for the restriction to small categories in \cref{thm-C} (and indeed in this introduction entirely) is to be able to use the terminology of $2$-categories and $2$-functors in a way that is consistent with the existing literature. 
\end{rem}


\subsection*{Structure of the paper}
In \cref{sec:idempotent-completion-exact-cats} we recall the construction of the idempotent completion of an exact category and use this to establish the $2$-functor $\exactcomplete$ from \cref{thm-C}. Analogously, the $2$-functor $\nexangcomplete$ is defined in \cref{sec:idempotent-completion-n-exangulated-cats} using the idempotent completion of an $n$-exangulated category in the sense of \cite{KlapprothMsapatoShah-Idempotent-completions-of-n-exangulated-categories}. In \cref{sec:cat-of-extensions-and-splitting-idempotents} we recall how to form the category of extensions, and prove \cref{prop-A} and \cref{thm-B}. In \cref{sec:2-categorical-perspective} we present the definition of the $2$-functor $\updave$ from \cite{Bennett-TennenhausHauglandSandoyShah-the-category-of-extensions-and-a-characterisation-of-n-exangulated-functors} and show how the main results of the previous sections culminate in \cref{thm-C}. \cref{sec:weak-idempotent-completion} concerns the weak idempotent completion. 

\subsection*{Conventions and notation}

Throughout this paper, let $n\geq 1$ denote a positive integer. Given objects $X$ and $Y$ in a category $\CC$, we write $\CC(X,Y)$ for the collection of morphisms from $X$ to $Y$ in $\CC$. Functors are always assumed to be covariant. We let $\Ab$ denote the category of abelian groups. 


\section{Idempotent completion of exact categories yields a \texorpdfstring{$2$}{2}-functor}
\label{sec:idempotent-completion-exact-cats}

The aim for this section is to explicitly relate the construction of the idempotent completion of an exact category to a $2$-categorical framework, establishing the $2$-functor $\exactcomplete$ which is part of \cref{thm-C} in \cref{sec:introduction}. We start by following B{\"{u}}hler \cite[Sec.~6]{Buhler-exact-categories} in recalling the idempotent completion (or Karoubi envelope). We also refer to Borceux \cite{Borceux-handbook-1}. 

Throughout the section, let $\CC$ denote an additive category. An \emph{idempotent} in $\CC$ is a morphism \mbox{$e\colon X\to X$} for some object $X\inn\CC$ satisfying \mbox{$e^2 = e$}. Splitting of idempotents, as defined below, plays a central role in this article.

\begin{defn} 
\label{defn:split-idempotents} 
(See \cite[Defs.~6.5.1, 6.5.3]{Borceux-handbook-1}.) An idempotent \mbox{$e \colon X \to X$} in $\CC$ \emph{splits} if there exist morphisms \mbox{$r \colon X \to Y$} and \mbox{$s \colon Y \to X$} such that $sr=e$ and $rs=\id{Y}$. The category $\CC$ is \emph{idempotent complete}, or has \emph{split idempotents}, if each idempotent in $\CC$ splits. 
\end{defn}

Even though the additive category $\CC$ need not have split idempotents, it can always be embedded into an idempotent complete category. This is due to Karoubi \cite[Sec.~1.2]{Karoubi-algebres-de-Clifford-et-K-theorie}. 

\begin{defn}
\label{defn:karoubi-envelope}
(See \cite[Rem.~6.3, Def.~6.4]{Buhler-exact-categories}.) 
Define a category $\rwt{\CC}$ as follows. The objects of $\rwt{\CC}$ are pairs $(X,e)$ for each object $X\inn\CC$ and each idempotent \mbox{$e\inn\Endd{\CC}(X)$}. Given objects $(X,\ed{X})$ and $(Y,\ed{Y})$ in $\rwt{\CC}$, the collection \mbox{$\rwt{\CC}((X,\ed{X}),(Y,\ed{Y}))$} of morphisms from $(X,\ed{X})$ to $(Y,\ed{Y})$ consists of triplets $(\ed{Y},f,\ed{X})$ such that $f \inn \CC(X,Y)$ satisfies \mbox{$f \ed{X} = f = \ed{Y} f$}. The composition of \mbox{$(\ed{Y},f,\ed{X})\inn\rwt{\CC}((X,\ed{X}), (Y,\ed{Y}))$} and \mbox{$(\ed{Z},g,\ed{Y})\inn\rwt{\CC}((Y,\ed{Y}), (Z,\ed{Z}))$} is given by 
\[
(\ed{Z},g,\ed{Y})\circ(\ed{Y},f,\ed{X}) \deff (\ed{Z}, gf, \ed{X}).
\]
It is clear that this composition is associative. The identity $\id{(X,e)}$ of $(X,e)\inn \rwt{\CC}$ is the morphism $(e,e,e)$. The category $\rwt{\CC}$ is called the \emph{idempotent completion of $\CC$}. 
\end{defn}

The category $\rwt{\CC}$ is additive with biproduct given by \mbox{$(X, \ed{X}) \oplus (Y, \ed{Y}) = (X \oplus Y, \ed{X} \oplus \ed{Y})$}. It is also idempotent complete; see \cite[Rem.~6.3]{Buhler-exact-categories} for details. There is a canonical additive inclusion functor \mbox{$\SId{\CC} \colon \CC \to \rwt{\CC}$} defined by setting \mbox{$\SId{\CC}(X) \deff (X, \id{X})$} for $X \inn \CC$ and \mbox{$\SId{\CC}(f) \deff (\id{Y}, f,\id{X})$} for $f \inn \CC(X,Y)$. This functor is $2$-universal among additive functors from $\CC$ to idempotent complete categories; see \cite[Prop.~6.10]{Buhler-exact-categories}.

Let $\CCu{\rightarrow\rightarrow}$ denote the category of composable morphisms in $\CC$, and note that a functor \mbox{$\CC\to\CD$} induces a functor \mbox{$\CCu{\rightarrow\rightarrow}\to\CDu{\rightarrow\rightarrow}$}. Now suppose $(\CC,\CX)$ is an exact category. In particular, the exact structure $\CX$ is a collection of objects in $\CCu{\rightarrow\rightarrow}$. One can define an exact structure $\rwt{\CX}$ on $\rwt{\CC}$ by declaring an object in $\rwtCCu{\rightarrow\rightarrow}$ to be in $\rwt{\CX}$ if it is a direct summand of an object belonging to the image of $\CX$ under the functor \mbox{$\CCu{\rightarrow\rightarrow}\to \rwtCCu{\rightarrow\rightarrow}$} induced by \mbox{$\SId{\CC} \colon \CC \to \rwt{\CC}$}.

\begin{prop}
\label{prop:inclusion-functor-into-Karoubi-envelope-is-ff-exact}
(See \cite[Rem.~6.3, Prop.~6.13]{Buhler-exact-categories}.) 
The pair $(\rwt{\CC},\rwt{\CX})$ forms an exact category, and \mbox{$\SId{\CC} \colon (\CC,\CX) \to (\rwt{\CC},\rwt{\CX})$} is a fully faithful exact functor that reflects exactness. 
\end{prop}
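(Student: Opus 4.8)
The plan is to verify the four assertions separately: that $\rwt{\CX}$ is a bona fide exact structure, and that $\SId{\CC}$ is fully faithful, exact, and reflects exactness. Two of these I would dispose of at once. Full faithfulness is built into the construction: by \cref{defn:karoubi-envelope} the collection $\rwt{\CC}(\SId{\CC}(X),\SId{\CC}(Y))$ consists exactly of the triplets $(\id{Y},f,\id{X})$ with $f\inn\CC(X,Y)$, so $\SId{\CC}$ is a bijection on hom-sets. Exactness of $\SId{\CC}$ is then a tautology: the image of a conflation of $(\CC,\CX)$ is a direct summand of itself and hence lies in $\rwt{\CX}$ by the very definition of $\rwt{\CX}$. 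This leaves the genuine content in the well-definedness of $\rwt{\CX}$ and in the reflection statement.

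Before checking the axioms, I would confirm that each member of $\rwt{\CX}$ really is a kernel--cokernel pair. A short direct computation from the morphism description in \cref{defn:karoubi-envelope} shows that the image under $\SId{\CC}$ of a conflation is a kernel--cokernel pair in $\rwt{\CC}$ (one tests against a general object $(T,e_T)$ and uses the corresponding universal property downstairs). I would then prove a small lemma that a retract of a kernel--cokernel pair, formed in the additive category $\rwtCCu{\rightarrow\rightarrow}$ of composable morphisms, is again a kernel--cokernel pair; here the idempotent completeness of $\rwt{\CC}$ is precisely what guarantees that the retracted kernels and cokernels exist.

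The substance then lies in verifying the exact-category axioms for $\rwt{\CX}$: closure under isomorphism, stability of inflations under composition (and dually for deflations), and the existence and stability of pushouts of inflations along arbitrary morphisms (dually, pullbacks of deflations). The device I would use throughout is \emph{summand completion}: an inflation in $\rwt{\CC}$ is by definition a retract of the image of an inflation of $\CC$, and after adding the complementary split summand $(X,\id{X}-e)$ one may arrange that the ambient conflation has all three terms in $\SId{\CC}(\CC)$, i.e.\ comes from an honest conflation of $(\CC,\CX)$. Each axiom reduces to the corresponding, already available, axiom in $(\CC,\CX)$: one performs the pushout or composition there and then cuts back down to the summand. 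The bookkeeping of these decompositions is the most laborious part, and the point needing the most care is that the pushout square produced in $\rwt{\CC}$ is compatible with the one transported from $\CC$; again idempotent completeness of $\rwt{\CC}$ is what lets me realise the required square as a summand.

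Finally, for reflection I would start from a sequence $\eta\colon A\to B\to C$ in $\CC$ with $\SId{\CC}(\eta)\inn\rwt{\CX}$. Restricting the kernel--cokernel universal properties to test objects in $\SId{\CC}(\CC)$ and invoking full faithfulness shows that $\eta$ is a kernel--cokernel pair in $\CC$; in particular the monic $f\colon A\to B$ of $\eta$ admits a cokernel there. Expressing $\SId{\CC}(\eta)$ as a summand of the image of a $\CC$-conflation $\xi$ produces, via full faithfulness, a split monomorphism of sequences $\eta\to\xi$ over $\CC$, whose middle component is a split mono $b\colon B\to B'$ with $bf$ equal to the inflation of $\xi$ precomposed with a split mono, hence itself an inflation of $(\CC,\CX)$. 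Quillen's obscure axiom, valid in every exact category, then applies to $f$: it admits a cokernel and becomes an inflation after composing with $b$, so $f$ is an inflation in $\CC$. I expect this last reduction to be the main obstacle. Since $\CC$ is not assumed idempotent complete, the complementary summand witnessing $\SId{\CC}(\eta)\inn\rwt{\CX}$ need not descend to $\CC$, so admissibility cannot be read off directly, and it is exactly here that the obscure axiom is indispensable.
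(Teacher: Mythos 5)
The paper itself offers no argument for this statement—it is recalled from B\"uhler with a citation—so your proposal has to stand entirely on its own, and as written it does not. The routine parts are correct: full faithfulness, exactness of $\SId{\CC}$, and the lemma that a direct summand of a kernel--cokernel pair in the idempotent complete category $\rwt{\CC}$ is again a kernel--cokernel pair. The genuine error is in the reflection step, where you assert that $bf=f'a$ is an inflation of $(\CC,\CX)$ because it is ``the inflation of $\xi$ precomposed with a split mono''. In a general exact category a split monomorphism need not be an inflation; it need not even admit a cokernel. (That every split mono is an inflation is exactly weak idempotent completeness, which is the one hypothesis you cannot make here, since the proposition is of interest precisely when $\CC$ lacks it.) Concretely, let $R=\BR[x,y,z]/(x^{2}+y^{2}+z^{2}-1)$, let $\CC$ be the finitely generated free $R$-modules with the split exact structure, and let $u\colon R\to R^{3}$ be $1\mapsto(x,y,z)$, a split mono whose cokernel in $\rMod{R}$ is the non-free tangent module $T$; if $u$ were an inflation, the third term of a conflation containing it would be an object of $\CC$ isomorphic to $T$, which is impossible—yet $u=\id{R^{3}}\circ u$ is an inflation precomposed with a split mono. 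This failure arises inside your own setup: for $\eta\colon R\xrightarrow{\id{R}}R\to 0$ and $\xi\colon R^{3}\xrightarrow{\id{R^{3}}}R^{3}\to 0$, with section $(u,u,0)$ and retraction induced by a splitting of $u$, your morphism $bf$ equals $u$, so the premise you feed to the obscure axiom is false and the argument proves nothing. The repair is to use the retraction $\pi\colon\xi\to\eta$ rather than the section: push out the inflation $f'$ along $\pi_{A}\colon A'\to A$ (an axiom of $(\CC,\CX)$) to obtain an inflation $\bar{f}\colon A\to P$ and a morphism $v\colon B'\to P$ with $vf'=\bar{f}\pi_{A}$; then $(v\iota_{B})f=vf'\iota_{A}=\bar{f}\pi_{A}\iota_{A}=\bar{f}$ is an inflation, $f$ admits the cokernel $g$, and the obscure axiom now legitimately gives that $f$ is an inflation, whence $\eta\inn\CX$.

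A second, smaller gap: the verification of the axioms for $\rwt{\CX}$ is only sketched, and the sketch misplaces the difficulty. For closure of inflations under composition, summand-completing $i\colon A\to B$ and $j\colon B\to C$ separately yields two complements of the \emph{same} middle object, say $B\oplus B^{c}\iso\SId{\CC}(Z)$ and $B\oplus B^{d}\iso\SId{\CC}(Z')$, which have no reason to agree; the naive procedure of repeatedly adjoining further complements to reconcile them never terminates, because the complement of an object of $\rwt{\CC}$ is in general not an object of $\CC$ (a $K_{0}$-type parity obstruction). What makes the argument close up is the exchange isomorphism $\SId{\CC}(Z)\oplus B^{d}\iso B\oplus B^{c}\oplus B^{d}\iso\SId{\CC}(Z')\oplus B^{c}$, which lets one complete $i$ and $j$ to \emph{composable} image inflations whose composite contains $ji$ as a summand; alternatively one avoids the bookkeeping entirely by passing through the Gabriel--Quillen embedding into an abelian category, as Thomason--Trobaugh do. Your outline treats this as routine bookkeeping, but without one of these ideas it does not assemble into a proof.
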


Let $\SF\colon \CC \to \CD$ be an additive functor. Following \cite[Rem.~6.6]{Buhler-exact-categories}, there is an induced additive functor \mbox{$\rwt{\SF}\colon \rwt{\CC} \to \rwt{\CD}$} given by 
\begin{equation}
\label{eqn:IC-of-functor}
\rwt{\SF}(X,e) \deff (\SF X, \SF e)
\hspace{1cm}
\text{and}
\hspace{1cm}
\rwt{\SF}(\ed{Y},f,\ed{X}) \deff (\SF \ed{Y}, \SF f, \SF \ed{X}).
\end{equation}
We refer to $\rwt{\SF}$ as the \emph{completion} of $\SF$. If $\SF \colon (\CC,\CX) \to (\CD,\CY)$ is an exact functor, then \mbox{$\rwt{\SF}\colon (\rwt{\CC},\rwt{\CX}) \to (\rwt{\CD},\rwt{\CY})$} is also exact; see the proof of \cite[Prop.~6.13]{Buhler-exact-categories}. 

In order to view the constructions above in a $2$-categorical framework, we recall some terminology. A \emph{$2$-category} is a collection of \emph{$0$-cells}, \emph{$1$-cells} and \emph{$2$-cells} satisfying certain axioms; see e.g.\ \cite[p.~273]{MacLane-categories-for-the-working-mathematician} or \cite[Sec.~2.3]{JohnsonYau-2-dimensional-categories}. One should think of $0$-cells, $1$-cells and $2$-cells as objects, morphisms between objects and morphisms between morphisms, respectively. A \mbox{$2$-category} has two notions of composition of $2$-cells: \emph{vertical} and \emph{horizontal}. Using the setup below, we recall these notions in the case of natural transformations. We use the Hebrew letters $\Beth$ (beth) and $\Daleth$ (daleth) for natural transformations of additive functors.

\begin{setup}
\label{setup:section4} 
For the rest of this section, we consider 
additive categories 
    $\CC,\CD,\CE$, 
additive functors 
    \mbox{$\SF,\SG,\SH\colon\CC\to\CD$} 
    and 
    \mbox{$\SL,\SM\colon\CD\to\CE$}, 
and 
natural transformations 
    \mbox{$\Beth\colon\SF\Rightarrow\SG$}, 
    \mbox{$\Beth'\colon\SG\Rightarrow\SH$}
    and 
    \mbox{$\Daleth\colon\SL\Rightarrow\SM$} 
as indicated in the diagram 
\[
\begin{tikzcd}
&{}
    \arrow[Rightarrow,
            shorten <= 6pt,
            shorten >= 6pt,
            yshift = -2pt
            ]{d}
                [xshift=2pt, yshift=-1pt]{\Beth}
&
&{}
    \arrow[Rightarrow,
            shorten <= 10pt, 
            shorten >= 12pt,
            xshift=0pt,
            yshift = -2pt
            ]{dd}
                [xshift=1pt, yshift=1pt]{\Daleth}
&
\\
\CC
    \arrow[bend left=50]{rr}
            [description, xshift=0pt, yshift=0pt]{\SF}
    \arrow{rr}
            [description]{\SG}
    \arrow[bend right=50]{rr}
            [description, xshift=0pt, yshift=0pt]{\SH}
&{}
    \arrow[Rightarrow,
            shorten <= 3pt,
            shorten >= 6pt,
            yshift=4pt
            ]{d}
                [xshift=2pt, yshift=3pt]{\Beth'}
&\CD
    \arrow[bend left=50]{rr}
            [description, xshift=0pt, yshift=0pt]{\SL}
    \arrow[bend right=50]{rr}
            [description, xshift=0pt, yshift=0pt]{\SM}
&{}
&\CE.
\\
&{}&&{}&
\end{tikzcd}
\]
\end{setup}

\begin{defn}
\label{notn:vertical-composition-and-horizontal-composition-of-2-cells} 
(See \cite[pp.~40, 42]{MacLane-categories-for-the-working-mathematician}.) 
The \emph{vertical composition} of $\Beth$ and $\Beth'$  is the natural transformation \mbox{$\Beth'\vertcomp\Beth\colon \SF\Rightarrow \SH$} given by \mbox{$\tensor*[]{(\Beth'\vertcomp\Beth)}{_{X}}\deff\tensor*[]{\Beth}{_{X}^{\prime}}\Bethd{X}$} for each $X\inn\CC$. The \emph{horizontal composition} of $\Beth$ and $\Daleth$ is the natural transformation \mbox{$\Daleth\horicomp\Beth\colon \SL\SF\Rightarrow \SM\SG$} defined by \mbox{$(\Daleth\horicomp\Beth\braced{X}\deff \Dalethd{\SG X}\circ(\SL \Bethd{X})$} for each $X\inn\CC$. 
\end{defn}

As described in \cite[Rem.~6.7]{Buhler-exact-categories}, the natural transformation $\Beth \colon \SF \Rightarrow \SG$ induces a natural transformation $\rwt{\Beth}\colon \rwt{\SF}\Rightarrow\rwt{\SG}$ as follows. Given $(X,e) \inn \rwt{\CC}$, there are the morphisms \mbox{$(\id{X},e,e)\colon (X,e) \to (X,\id{X})$} and \mbox{$(e,e,\id{X})\colon (X,\id{X}) \to (X,e)$}. Put 
\[
\rwtBethd{(X,e)} \deff 
	\rwt{\SG}(e,e,\id{X})\circ (\id{\SG X}, \Bethd{X}, \id{\SF X}) \circ \rwt{\SF}(\id{X},e,e)
	= (\SG e,(\SG e)\Bethd{X}\SF e,\SF e)
\]
as indicated in the diagram
\[
\begin{tikzcd}[column sep=3cm]
\rwt{\SF}(X,e) 
	\arrow[dotted]{r}{\rwtBethd{(X,e)}}
	\arrow{d}[swap]{\rwt{\SF}(\id{X},e,e)}
& \rwt{\SG}(X,e) \\
(\SF X,\id{\SF X})
	\arrow{r}[swap]{(\id{\SG X}, \Bethd{X}, \id{\SF X})}
& (\SG X,\id{\SG X}).
	\arrow{u}[swap]{\rwt{\SG}(e,e,\id{X})}
\end{tikzcd}
\]
It is straightforward to check that $\rwt{\Beth}$ is natural. We refer to $\rwt{\Beth}$ as the \emph{completion} of $\Beth$.

\begin{notn}
\label{not:Exactcat-definition}
We write $\Exactcat$ for the collection of $0$-cells, $1$-cells and $2$-cells consisting of exact categories, exact functors and natural transformations, respectively. For $i\inn\{0,1,2\}$, we denote the collection of $i$-cells by $\Exactcatcell{i}$. Given $0$-cells $(\CC,\CX)$ and $(\CD,\CY)$, there is a category $\Exactcat((\CC,\CX), (\CD,\CY))$ with $1$-cells of the form \mbox{$(\CC,\CX) \to (\CD,\CY)$} as objects, and where morphisms and composition are given by $2$-cells and vertical composition. We note that for an object $\SF$ in $\Exactcat((\CC,\CX), (\CD,\CY))$, its identity morphism is the identity natural transformation $\id{\SF}\colon\SF\Rightarrow\SF$ given by 
$\tensor*[]{\Set{(\id{\SF}\tensor*[]{)}{_{X}}\deff\id{\SF X}}}{_{X\inn\CC}}$. There is a \mbox{$2$-category} $\exactcat$ determined by the $0$-cells in $\Exactcat$ which are small categories. We furthermore write $\ICExactcat$ and $\ICexactcat$ when restricting to idempotent complete $0$-cells in $\Exactcat$ and $\exactcat$, respectively, and note that also $\ICexactcat$ is a $2$-category. 
\end{notn}

A \emph{$2$-functor} between two $2$-categories is an assignment of $i$-cells in the domain category to $i$-cells in the codomain category for $i\inn\{1,2,3\}$, satisfying some compatibility conditions; see e.g.\ \cite[p.~278]{MacLane-categories-for-the-working-mathematician} or \cite[Prop.~4.1.8]{JohnsonYau-2-dimensional-categories}. We now begin to construct the $2$-functor $\exactcomplete$ used in \cref{thm-C} in \cref{sec:introduction}. 

\begin{defn}
\label{def:exactcomplete}
Let $\exactcomplete = (\exactcompletecell{0},\exactcompletecell{1},\exactcompletecell{2}) \colon \Exactcat \to \ICExactcat$ be defined by the assignments \mbox{$\exactcompletecell{i}\colon \Exactcatcell{i} \to \ICExactcatcell{i}$}, where:
\[
\exactcompletecell{0}  (\CC,\CX)  
    \deff (\rwt{\CC},\rwt{\CX}),
\hspace{1cm}    
\exactcompletecell{1}  (\SF)  
    \deff \rwt{\SF},
\hspace{1cm}
\exactcompletecell{2} ( \Beth ) 
    \deff \rwt{\Beth}.
\]
\end{defn}

If one ignores the set-theoretic issue described in \cref{rem:set-theory-remark}, then the theorem below should be interpreted as showing that $\exactcomplete \colon \Exactcat \to \ICExactcat$ is a $2$-functor.

\begin{thm}
\label{thm:2-functor-of-exact-categories}
The following statements hold for the assignments
$\exactcompletecell{0}$, $\exactcompletecell{1}$ and $\exactcompletecell{2}$.
\begin{enumerate}[label=\textup{(\roman*)}] 
    \item\label{item:exactcomplete-0-and-1-give-functor-on-underyling-1-categories}
    The pair  
    $(\exactcompletecell{0}, \exactcompletecell{1})$ 
    defines a functor 
    $\Exactcat\to\ICExactcat$.
    \item\label{item:exactcomplete-1-and-2-give-functor} 
    The pair 
    $(\exactcompletecell{1}, \exactcompletecell{2})$ 
    defines a functor 
	\mbox{$\Exactcat((\CC,\CX), (\CD,\CY)) \to \ICExactcat((\rwt{\CC},\rwt{\CX}), (\rwt{\CD},\rwt{\CY}))$}
    whenever $(\CC,\CX)$ and $(\CD,\CY)$ are exact categories.
    \item\label{item:exactcomplete2-preserves-horizontal-composition} 
    The assignment $\exactcompletecell{2}$ preserves horizontal composition.
\end{enumerate}
In particular, restricting $\exactcomplete$ to small categories yields a $2$-functor \mbox{$\exactcat \to \ICexactcat$}. 
\end{thm}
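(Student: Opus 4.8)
The plan is to prove the three parts \ref{item:exactcomplete-0-and-1-give-functor-on-underyling-1-categories}--\ref{item:exactcomplete2-preserves-horizontal-composition}, since together they supply exactly the data and axioms required of a $2$-functor: part \ref{item:exactcomplete-0-and-1-give-functor-on-underyling-1-categories} gives a functor between the underlying ordinary categories ($0$-cells and $1$-cells), part \ref{item:exactcomplete-1-and-2-give-functor} gives, for each pair of $0$-cells, a functor between the relevant hom-categories (compatibility of $\exactcompletecell{2}$ with vertical composition and identity $2$-cells), and part \ref{item:exactcomplete2-preserves-horizontal-composition} supplies the remaining axiom on horizontal composition. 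The final sentence then follows immediately: restricting the $0$-cells to small exact categories keeps all constructions inside $\exactcat$ and $\ICexactcat$, which \cref{not:Exactcat-definition} already identifies as genuine $2$-categories, so the verified axioms make $\exactcomplete$ a bona fide $2$-functor with no set-theoretic caveat.

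For part \ref{item:exactcomplete-0-and-1-give-functor-on-underyling-1-categories}, I would first note that \cref{prop:inclusion-functor-into-Karoubi-envelope-is-ff-exact} ensures $\exactcompletecell{0}(\CC,\CX) = (\rwt{\CC},\rwt{\CX})$ is an idempotent complete exact category, and the discussion following \eqref{eqn:IC-of-functor} ensures $\exactcompletecell{1}(\SF) = \rwt{\SF}$ is an exact functor between such categories, so the assignments land in $\ICExactcat$. The functoriality checks are then routine computations directly from the formulas in \eqref{eqn:IC-of-functor}: one verifies $\rwt{\id{\CC}} = \id{\rwt{\CC}}$ by evaluating on objects $(X,e)$ and morphisms $(\ed{Y},f,\ed{X})$, and one verifies $\rwt{\SL\SF} = \rwt{\SL}\,\rwt{\SF}$ similarly, using that both sides apply $\SL$ then $\SF$ componentwise to $X$, $e$ and $f$. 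For part \ref{item:exactcomplete-1-and-2-give-functor} I would show that $\exactcompletecell{2}$ respects identities and vertical composition. That $\exactcompletecell{2}(\id{\SF}) = \id{\rwt{\SF}}$ comes from evaluating the defining diagram for $\rwt{\Beth}$ with $\Beth = \id{\SF}$, so that $\rwtBethd{(X,e)} = (\SF e, (\SF e)(\id{\SF X})(\SF e), \SF e) = (\SF e, \SF e, \SF e) = \id{\rwt{\SF}(X,e)}$. That $\exactcompletecell{2}(\Beth' \vertcomp \Beth) = \exactcompletecell{2}(\Beth') \vertcomp \exactcompletecell{2}(\Beth)$ reduces, via the explicit formula $\rwtBethd{(X,e)} = (\SG e, (\SG e)\Bethd{X}(\SF e), \SF e)$, to the identity $(\SH e)\tensor*[]{\Beth}{_{X}^{\prime}}(\SG e)\cdot(\SG e)\Bethd{X}(\SF e) = (\SH e)\tensor*[]{\Beth}{_{X}^{\prime}}\Bethd{X}(\SF e)$, which holds because $e$ is idempotent so $(\SG e)(\SG e) = \SG e$.

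For part \ref{item:exactcomplete2-preserves-horizontal-composition}, the goal is to check $\rwt{\Daleth \horicomp \Beth} = \rwt{\Daleth} \horicomp \rwt{\Beth}$ as natural transformations $\rwt{\SL}\,\rwt{\SF} \Rightarrow \rwt{\SM}\,\rwt{\SG}$. I would evaluate both sides at an arbitrary $(X,e) \inn \rwt{\CC}$ using the component formula from \cref{notn:vertical-composition-and-horizontal-composition-of-2-cells} together with the explicit description of $\rwt{(-)}$, reducing everything to a single expression in $\CE$ built from the components $\Bethd{X}$, $\Dalethd{\SG X}$ and the functors applied to $e$; the two computations agree after using naturality of $\Daleth$ and the idempotency of $e$ (and of its images under the functors). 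I expect this horizontal-composition identity in part \ref{item:exactcomplete2-preserves-horizontal-composition} to be the main obstacle, since it involves the interaction of the completion operation with two different functors and a change of source category, so the bookkeeping of which idempotent decorates which slot of each triplet is the delicate point; the key simplification throughout is that the explicit formula $\rwtBethd{(X,e)} = (\SG e, (\SG e)\Bethd{X}(\SF e), \SF e)$ turns every naturality square into a finite computation in $\CE$ governed by idempotency relations, so no structural subtlety beyond careful algebra is required.
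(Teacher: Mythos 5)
Your proposal is correct and follows essentially the same route as the paper: the paper's own proof also reduces everything to the three listed verifications, but it delegates the compatibility of $\exactcompletecell{2}$ with vertical and horizontal composition to a citation of \cite[Rem.~6.8]{Buhler-exact-categories} and calls the remaining checks straightforward, whereas you carry out those computations explicitly; the mathematical content is the same.

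One justification in your part (ii) is incomplete as stated. The identity $(\SH e)\tensor*[]{\Beth}{_{X}^{\prime}}(\SG e)(\SG e)\Bethd{X}(\SF e) = (\SH e)\tensor*[]{\Beth}{_{X}^{\prime}}\Bethd{X}(\SF e)$ does not follow from idempotency alone: $(\SG e)(\SG e)=\SG e$ only collapses the two middle factors to one, leaving $(\SH e)\tensor*[]{\Beth}{_{X}^{\prime}}(\SG e)\Bethd{X}(\SF e)$. To remove the remaining $\SG e$ you also need naturality of $\Beth$ applied to $e\colon X\to X$, which gives $(\SG e)\Bethd{X}(\SF e)=\Bethd{X}(\SF e)(\SF e)=\Bethd{X}\SF e$ (equivalently, use naturality of $\Beth'$ to absorb $\SG e$ into $\SH e$). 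This is precisely the same ingredient you correctly invoke for the horizontal-composition check in part (iii), so the fix is one line; with it, your argument is complete.
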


\begin{proof}
It follows from the discussions above that the assignments are well-defined. 
Checking functoriality in \ref{item:exactcomplete-0-and-1-give-functor-on-underyling-1-categories} is straightforward. 
The assignment $\exactcompletecell{2}$ is compatible with vertical and horizontal composition by \cite[Rem.~6.8]{Buhler-exact-categories}, and checking $\rwt{\id{\SF}} = \id{\rwt{\SF}}$ is straightforward, so \ref{item:exactcomplete-1-and-2-give-functor} and \ref{item:exactcomplete2-preserves-horizontal-composition} hold.
\end{proof}


\section{Idempotent completion of \texorpdfstring{$n$}{n}-exangulated categories yields a \texorpdfstring{$2$}{2}-functor}
\label{sec:idempotent-completion-n-exangulated-cats}

In this section we describe how taking the idempotent completion of an $n$-exangulated category in the sense of \cite{KlapprothMsapatoShah-Idempotent-completions-of-n-exangulated-categories} relates to a $2$-categorical framework. This is done by constructing the $2$-functor $\nexangcomplete$ from \cref{thm-C} in \cref{sec:introduction}. We start by giving an overview of relevant notions and constructions.

Given an additive category $\CC$ and a biadditive functor $\BE \colon \CCu{\op}\times\CC \to \Ab$, an element $\alpha \inn \BE(C,A)$ is called an \emph{$\BE$-extension}. A \emph{morphism of $\BE$-extensions} from $\alpha\inn \BE(C,A)$ to \mbox{$\beta\inn \BE(D,B)$} is a pair $(a,c)$ of morphisms \mbox{$a\colon A\to B$} and \mbox{$c\colon C\to D$} in $\CC$ such that 
\[
\BE(C,a)(\alpha)=\BE(c,A)(\beta).
\]
 
Recall from \cite[Sec.~2]{HerschendLiuNakaoka-n-exangulated-categories-I-definitions-and-fundamental-properties} that an \emph{$n$-exangulated category} $(\CC,\BE,\fs)$ consists of 
\begin{enumerate}[label=\textup{(\roman*)}]
	\item an additive category $\CC$, 
	\item a biadditive functor $\BE \colon \CCu{\op}\times\CC \to \Ab$, and 
	\item an exact realisation $\fs$ of $\BE$ in the sense of \cite[Def.~2.22]{HerschendLiuNakaoka-n-exangulated-categories-I-definitions-and-fundamental-properties}, 
\end{enumerate}
such that axioms (EA$1$), (EA$2$) and (EA$\tensor*[]{2}{^{\op}}$) stated in \cite[Def.~2.32]{HerschendLiuNakaoka-n-exangulated-categories-I-definitions-and-fundamental-properties} are satisfied. 
 
The realisation $\fs$ associates to each $\BE$-extension $\alpha\inn\BE(C,A)$ a certain homotopy class 
\[
\fs(\alpha)
    = [\Xcom]
    = [\begin{tikzcd}
        \Xd{0}
            \arrow{r}
        & \Xd{1}
            \arrow{r}
        & \cdots
            \arrow{r}
        & \Xd{n+1}
    \end{tikzcd}]
\]
of an $(n+2)$-term complex $\Xcom$ in $\CC$ with $\Xd{0} = A$ and $\Xd{n+1} = C$. The pair $\lan \Xcom, \alpha\ran$ is then called a \emph{(distinguished) $n$-exangle}. 

A \emph{morphism $\lan \Xcom, \alpha\ran \to \lan \Yd{\bullet}, \beta \ran$ of $n$-exangles} is given by a morphism \mbox{$(\fd{0},\ldots, \fd{n+1})\colon \Xcom\to \Yd{\bullet}$} of complexes such that $(\fd{0},\fd{n+1}) \colon \alpha \to \beta$ is a morphism of $\BE$-extensions. In this case, the tuple \mbox{$(\fd{0},\ldots, \fd{n+1})$} is said to be a \emph{lift} of $(\fd{0},\fd{n+1})$. 

Suppose throughout this section that $(\CC,\BE,\fs)$ and $(\CD,\BF,\ft)$ are $n$-exangulated categories. An additive functor $\SF\colon\CC\to\CD$ induces a functor $\SFd{\com} \colon \comd{\CC} \to \comd{\CD}$ between the associated categories of complexes. One can define a new biadditive functor \mbox{$\BF(\SFu{\op}-,\SF-)\colon \CCu{\op}\times\CC \to \Ab$}, which we will denote by $\BF(\SF-,\SF-)$. 

\begin{defn}
\label{def:n-exangulated-functor}(See \cite[Def.~2.32]{Bennett-TennenhausShah-transport-of-structure-in-higher-homological-algebra}.)
Let $\SF\colon \CC\to\CD$ be an additive functor and suppose there is a natural transformation
\[
\Gamma = 
    \{\Gammad{(C,A)} \curlbraced{(C,A)\inn\CCu{\scalebox{0.7}{\op}}\times\CC}
    \colon \BE(-,-) \Longrightarrow \BF(\SF-,\SF-).
\]
We call the pair $(\SF,\Gamma) \colon (\CC,\BE,\fs) \to (\CD,\BF,\ft)$ an \emph{$n$-exangulated functor} if, for all $A,C\inn\CC$ and each $\alpha\inn\BE(C,A)$, we have that $\fs(\alpha)=[\Xcom]$ implies $\ft(\Gammad{(C,A)}(\alpha))=[\SFd{\com} \Xcom]$.
\end{defn}

It was demonstrated in \cite[Def.~3.18, Lem.~3.19]{Bennett-TennenhausHauglandSandoyShah-the-category-of-extensions-and-a-characterisation-of-n-exangulated-functors} that one can compose $n$-exangulated functors as follows. Suppose $(\SF, \Gamma)\colon (\CC,\BE,\fs) \to (\CD,\BF,\ft)$ and \mbox{$(\SL, \Phi)\colon (\CD,\BF,\ft) \to (\CE,\BG,\fu)$} are \mbox{$n$-exangulated} functors between $n$-exangulated categories. The \emph{composite} is the $n$-exangulat\-ed functor \mbox{$(\SL,\Phi) \circ (\SF,\Gamma)\deff(\SL\SF, \Phid{\SF\times\SF} \vertcomp \Gamma)$}, where $\Phid{\SF\times\SF}$ is the natural transformation 
\[
\Phid{\SF\times\SF}
    = 
    \{\Phid{(\SF C,\SF A)}\curlbraced{(C,A)\inn\CCu{\scalebox{0.7}{\op}}\times\CC}
        \colon 
    \BF(\SF-,\SF-) 
        \Longrightarrow
    \BG(\SL\SF-,\SL\SF-).
\] 

For $\Gamma$ as above and for $\alpha\inn\BE(C,A)$, we will usually write $\Gamma(\alpha)$ instead of $\Gammad{(C,A)}(\alpha)$. Furthermore, we use the simplified notation $\ad{\BE}\alpha$ (resp.\ $\du{\BE}\alpha$) for the $\BE$-extension \mbox{$\BE(C,a)(\alpha)\inn\BE(C,B)$} (resp.\ \mbox{$\BE(d,A)(\alpha)\inn\BE(D,A)$}) for morphisms $a\colon A\to B$ and $d\colon D\to C$ in $\CC$.

As proved in \cite{KlapprothMsapatoShah-Idempotent-completions-of-n-exangulated-categories}, the idempotent completion $\rwt{\CC}$ of an $n$-exangulated category $(\CC,\BE,\fs)$ admits a canonical $n$-exangulated structure. We use the notation $(\rwt{\CC},\rwt{\BE},\rwt{\fs}\mspace{1mu})$ for the $n$-exangulated category obtained from this construction, and recall the definition of the biadditive functor \mbox{$\rwt{\BE}\colon \rwtCCu{\op} \times \rwt{\CC} \to \Ab$} and the realisation $\rwt{\fs}$ of $\rwt{\BE}$ below. In the case $n=1$, the construction was given by Msapato \cite{Msapato-the-karoubi-envelope-and-weak-idempotent-completion-of-an-extriangulated-category}. 

\begin{defn}
\label{def:bifunctor-on-IC-of-exangulated-category} 
(See \cite[Def.~4.4]{KlapprothMsapatoShah-Idempotent-completions-of-n-exangulated-categories}.) 
For objects $(A,\ed{A}),(C,\ed{C})\inn\rwt{\CC}$, we let
\begin{align*}
\rwt{\BE}((C,\ed{C}),(A,\ed{A}))
    & \deff 
    \Set{ (\ed{A}, \alpha, \ed{C}) 
        | 
    \text{$\alpha\inn\BE(C,A)$ 
    and 
    $(\ed{A}\braced{\BE}   \alpha 
        = \alpha 
        = (\ed{C}\braceu{\BE}   \alpha$}}.
\end{align*}
For morphisms $(\ed{B}, a, \ed{A})\colon (A,\ed{A})\to (B,\ed{B})$ and $(\ed{C},d,\ed{D})\colon (D,\ed{D})\to (C,\ed{C})$ in $\rwt{\CC}$ we put 
\begin{align*}
\rwt{\BE}((\ed{C},d,\ed{D}) , (\ed{B}, a, \ed{A})) 
    \colon 
    \rwt{\BE}((C,\ed{C}), (A,\ed{A})) 
    &\longrightarrow 
    \rwt{\BE}((D, \ed{D}), (B, \ed{B})) \\ 
(\ed{A}, \alpha, \ed{C}) 
    &\longmapsto (\ed{B}, \BE(d,a)(\alpha), \ed{D}).
\end{align*}
\end{defn}

The set $\rwt{\BE}((C,\ed{C}),(A,\ed{A}))$ has an abelian group structure given by 
\[
(\ed{A}, \alpha, \ed{C}) + (\ed{A}, \alpha', \ed{C})
    \deff (\ed{A}, \alpha + \alpha', \ed{C}), 
\]
and \cref{def:bifunctor-on-IC-of-exangulated-category} indeed gives a biadditive functor $\rwt{\BE}\colon \rwtCCu{\op} \times \rwt{\CC} \to \Ab$; see \cite[Rem.~4.5]{KlapprothMsapatoShah-Idempotent-completions-of-n-exangulated-categories}. 

Given a complex $\Xcom$ in $\CC$ and an idempotent morphism $\ed{\bullet} \colon \Xcom \to \Xcom$ of complexes, we follow \cite[Def.~4.15]{KlapprothMsapatoShah-Idempotent-completions-of-n-exangulated-categories} and the discussion immediately thereafter in using the notation $(\Xcom, \ed{\bullet})$ to denote the complex
\begin{equation*}
\label{eqn:Xe-bullet-definition}
\hspace{-4pt}
\begin{tikzcd}[column sep=1.3cm, scale cd=0.9]
(X_0,\ed{0})
    \arrow{rr}{(\ed{1},\ed{1}\dd{0},\ed{0})}
&& (\Xd{1},\ed{1})
    \arrow{rr}{(\ed{2},\ed{2}\dd{1},\ed{1})}
&& \cdots
    \arrow{r}
& (\Xd{n},\ed{n})
    \arrow{rr}{(\ed{n+1},\ed{n+1}\dd{n},\ed{n})}
&& (\Xd{n+1},\ed{n+1})
\end{tikzcd}
\end{equation*}
in $\rwt{\CC}$, where the maps $d_{i}\colon X_{i}\to X_{i+1}$ are the differentials of the complex $\Xcom$. The realisation $\rwt{\fs}$ of $\rwt{\BE}$ is then defined as follows. 

\begin{defn}
\label{def:realisation-of-wt-BE}
(See \cite[Def.~4.20]{KlapprothMsapatoShah-Idempotent-completions-of-n-exangulated-categories}.) 
Let $(\ed{A}, \alpha, \ed{C}) \inn \rwt{\BE}((C,\ed{C}),(A,\ed{A}))$ be arbitrary. Since $\alpha\inn\BE(C,A)$, one may choose $\Xcom$ so that 
$
\fs(\alpha) 
    = [\Xcom]
    = [\begin{tikzcd}[column sep=0.6cm]
        A \arrow{r}
        & \Xd{1} \arrow{r}
        & \cdots \arrow{r}
        & \Xd{n} \arrow{r}
        & C
    \end{tikzcd}].
$
One may also lift $(\ed{A}, \ed{C}) \colon \alpha \to \alpha$ to an idempotent endomorphism $\ed{\bullet}$ of the $n$-exangle $\langle \Xcom, \alpha \rangle$ by \cite[Cor.~4.13]{KlapprothMsapatoShah-Idempotent-completions-of-n-exangulated-categories}. Using this, we define $\rwt{\fs}$ by setting $\rwt{\fs}(\ed{A}, \alpha, \ed{C}) \deff [(\Xcom, \ed{\bullet})]$.
\end{defn}

To see that the assignment $\rwt{\fs}$ from \cref{def:realisation-of-wt-BE} does not rely on the choices involved, see \cite[Rem.~4.21]{KlapprothMsapatoShah-Idempotent-completions-of-n-exangulated-categories}. By \cite[Thm.~A]{KlapprothMsapatoShah-Idempotent-completions-of-n-exangulated-categories}, the triplet $(\rwt{\CC},\rwt{\BE},\rwt{\fs}\mspace{1mu})$ is an $n$-exangulated category and the inclusion \mbox{$(\SId{\CC}, \Gammad{\CC})\colon (\CC,\BE,\fs)\to(\rwt{\CC},\rwt{\BE},\rwt{\fs}\mspace{1mu})$} is an $n$-exangulated functor, where the natural transformation \mbox{$\Gammad{\CC}\colon \BE(-,-)\Rightarrow \rwt{\BE}(\SId{\CC}-,\SId{\CC}-)$} is given by $\alpha \mapsto (\id{A}, \alpha, \id{C})$ for $\alpha\inn \BE(C,A)$. 

Now suppose $(\SF,\Gamma)\colon (\CC,\BE,\fs)\to (\CD,\BF,\ft)$ is an $n$-exangulated functor. Recall that there is an induced additive functor $\rwt{\SF}\colon \rwt{\CC}\to\rwt{\CD}$ as defined in \eqref{eqn:IC-of-functor}. Our next aim is to show that one obtains an $n$-exangulated functor 
\mbox{$(\rwt{\SF},\rwt{\Gamma}) \colon 
(\rwt{\CC},\rwt{\BE},\rwt{\fs}\mspace{1mu})
    \to 
(\rwt{\CD},\rwt{\BF},\rwt{\ft}\mspace{1mu})$} between the idempotent completions. We first need to define a natural transformation \mbox{$\rwt{\Gamma} \colon \rwt{\BE}(-,-)\Rightarrow\rwt{\BF}(\rwt{\SF}-,\rwt{\SF}-)$}.

\begin{defn}
\label{def:completion-of-Gamma}
Set
$
\rwt{\Gamma}\deff \Setd{\rwtGammad{((C,\ed{C}),(A,\ed{A}))}}{((C,\ed{C}),(A,\ed{A}))\inn\rwtCCu{\op}\times\rwt{\CC}},
$
where
\begingroup
\allowdisplaybreaks[0] 
\begin{align*}
\rwtGammad{((C,\ed{C}),(A,\ed{A}))} 
    \colon 
    \rwt{\BE}((C,\ed{C}), (A,\ed{A})) 
    &\longrightarrow 
    \rwt{\BF}(\rwt{\SF}(C, \ed{C}), \rwt{\SF}(A, \ed{A})) \\ 
(\ed{A}, \alpha, \ed{C}) 
    &\longmapsto (\SF \ed{A}, \Gamma(\alpha), \SF \ed{C}).
\end{align*}
\endgroup
\end{defn}

Note that $(\SF \ed{A},\Gamma(\alpha),\SF \ed{C})$ indeed lies in $\rwt{\BF}(\rwt{\SF}(C,\ed{C}),\rwt{\SF}(A,\ed{A}))$, because naturality of $\Gamma$ yields 
\mbox{$
(\SF \ed{A}\braced{\BF}   \Gamma(\alpha) 
	= \Gamma((\ed{A}\braced{\BE}  \alpha)
	= \Gamma(\alpha)
$}
and
$(\SF \ed{C}\braceu{\BF}   \Gamma(\alpha) 
= \Gamma((\ed{C}\braceu{\BE}  \alpha)
	= \Gamma (\alpha)$. 

\begin{warning}
Since $\Gamma\colon\BE(-,-)\Rightarrow \BF(\SF,-,\SF-)$ is a natural transformation, one might wonder if the definition of $\rwt{\Gamma}$ above agrees with the description of the completion of a natural transformation of additive functors from \cref{sec:idempotent-completion-exact-cats}. However, the \emph{biadditive} functors $\BE$ and $\BF(\SF-,\SF-)$ are not necessarily \emph{additive} functors $\CCu{\op}\times\CC\to\Ab$, so we cannot form the completions of them as in \cref{sec:idempotent-completion-exact-cats}. Thus, when we use notation of the form $\rwt{\Gamma}$ for a natural transformation of biadditive functors, it always refers to the construction from \cref{def:completion-of-Gamma}.
\end{warning}

\begin{lem}
\label{prop:completion-of-n-ex-functor}
The pair $(\rwt{\SF},\rwt{\Gamma})$ is an $n$-exangulated functor $(\rwt{\CC},\rwt{\BE},\rwt{\fs}\mspace{1mu})\to (\rwt{\CD},\rwt{\BF},\rwt{\ft}\mspace{1mu})$.
\end{lem}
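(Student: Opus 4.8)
The plan is to verify the three requirements packaged in \cref{def:n-exangulated-functor}: that $\rwt{\SF}$ is additive, that $\rwt{\Gamma}$ is a natural transformation $\rwt{\BE}(-,-)\Rightarrow\rwt{\BF}(\rwt{\SF}-,\rwt{\SF}-)$, and that $\rwt{\fs}$-realisations are carried to $\rwt{\ft}$-realisations by $\rwt{\Gamma}$. The additivity of $\rwt{\SF}$ has already been recorded, so only the latter two conditions require work. For naturality of $\rwt{\Gamma}$, I would chase an element $(\ed{A},\alpha,\ed{C})$ around the square associated to morphisms $(\ed{B},a,\ed{A})$ and $(\ed{C},d,\ed{D})$: both composites produce a triplet with identical outer idempotents $\SF\ed{B}$ and $\SF\ed{D}$, so comparing the middle entries reduces the claim to $\Gamma(\BE(d,a)(\alpha)) = \BF(\SF d,\SF a)(\Gamma(\alpha))$, which is precisely naturality of $\Gamma$.

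The substance of the proof is the realisation condition. Fix $(\ed{A},\alpha,\ed{C})\inn\rwt{\BE}((C,\ed{C}),(A,\ed{A}))$ and suppose $\rwt{\fs}(\ed{A},\alpha,\ed{C}) = [(\Xcom,\ed{\bullet})]$. Unravelling \cref{def:realisation-of-wt-BE}, this means $\Xcom$ has been chosen with $\fs(\alpha) = [\Xcom]$ and that $\ed{\bullet}$ is an idempotent endomorphism of the $n$-exangle $\langle\Xcom,\alpha\rangle$ lifting $(\ed{A},\ed{C})$. Using \cref{def:completion-of-Gamma}, the goal becomes showing $\rwt{\ft}(\SF\ed{A},\Gamma(\alpha),\SF\ed{C}) = [\rwtSFd{\com}(\Xcom,\ed{\bullet})]$.

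The key observation is that the data required by \cref{def:realisation-of-wt-BE} for $(\SF\ed{A},\Gamma(\alpha),\SF\ed{C})$ can be obtained by applying $\SF$ to the data already in hand. Since $(\SF,\Gamma)$ is $n$-exangulated and $\fs(\alpha) = [\Xcom]$, we have $\ft(\Gamma(\alpha)) = [\SFd{\com}\Xcom]$, so $\SFd{\com}\Xcom$ is an admissible complex for $\Gamma(\alpha)$. Next I would check that $\SFd{\com}\ed{\bullet}$ is an idempotent endomorphism of the $n$-exangle $\langle\SFd{\com}\Xcom,\Gamma(\alpha)\rangle$ lifting $(\SF\ed{A},\SF\ed{C})$: it is an idempotent chain endomorphism because $\SFd{\com}$ is a functor, and the claim that $(\SF\ed{A},\SF\ed{C})$ is a morphism of $\BF$-extensions $\Gamma(\alpha)\to\Gamma(\alpha)$ follows from naturality of $\Gamma$, exactly as in the computation displayed after \cref{def:completion-of-Gamma}. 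Because $\rwt{\ft}$ is independent of all choices involved (cf.\ \cite[Rem.~4.21]{KlapprothMsapatoShah-Idempotent-completions-of-n-exangulated-categories}), we may then evaluate $\rwt{\ft}(\SF\ed{A},\Gamma(\alpha),\SF\ed{C})$ using precisely $\SFd{\com}\Xcom$ and $\SFd{\com}\ed{\bullet}$, obtaining $[(\SFd{\com}\Xcom,\SFd{\com}\ed{\bullet})]$.

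It remains to identify $(\SFd{\com}\Xcom,\SFd{\com}\ed{\bullet})$ with $\rwtSFd{\com}(\Xcom,\ed{\bullet})$ as complexes in $\rwt{\CD}$, which is a termwise comparison: in degree $i$ both have object $(\SF\Xd{i},\SF\ed{i})$, and applying $\rwt{\SF}$ to the differential $(\ed{i+1},\ed{i+1}\dd{i},\ed{i})$ of $(\Xcom,\ed{\bullet})$ yields $(\SF\ed{i+1},(\SF\ed{i+1})(\SF\dd{i}),\SF\ed{i})$, which is exactly the differential prescribed by the $(\SFd{\com}\Xcom,\SFd{\com}\ed{\bullet})$ notation. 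Hence the two complexes coincide on the nose, giving the required equality of homotopy classes. I expect the only mildly delicate point to be the bookkeeping verifying that $\SFd{\com}\ed{\bullet}$ is a legitimate lift, i.e.\ confirming the $\BF$-extension morphism condition via naturality; everything else is formal functoriality combined with the choice-independence of $\rwt{\ft}$.
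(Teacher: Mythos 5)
Your proposal is correct and follows essentially the same route as the paper's proof: naturality of $\rwt{\Gamma}$ reduces to naturality of $\Gamma$, and for the realisation condition one applies $\SFd{\com}$ to the chosen complex and idempotent lift, observing that $\SFd{\com}\ed{\bullet}$ lifts $(\SF\ed{A},\SF\ed{C})$ and that the resulting complex is exactly $\rwtSFd{\com}(\Xcom,\ed{\bullet})$. The only difference is one of presentation: you make explicit the appeal to the choice-independence of $\rwt{\ft}$ (via \cite[Rem.~4.21]{KlapprothMsapatoShah-Idempotent-completions-of-n-exangulated-categories}) and the termwise identification, both of which the paper leaves implicit.
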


\begin{proof}
Given a pair of objects $(A,\ed{A}),(C,\ed{C})\inn\rwt{\CC}$, the map $\rwtGammad{((C,\ed{C}),(A,\ed{A}))}$ from \cref{def:completion-of-Gamma} is a homomorphism of abelian groups as $\Gammad{(C,A)}$ is one. It follows from the naturality of $\Gamma$ that $\rwt{\Gamma}$ is a natural transformation 
\mbox{$\rwt{\BE}(-,-)\Rightarrow\rwt{\BF}(\rwt{\SF}-,\rwt{\SF}-)$}.

Consider now an $\rwt{\BE}$-extension $(\ed{A},\alpha,\ed{C})\inn\rwt{\BE}((C,\ed{C}),(A,\ed{A}))$. Following the definition of $\rwt{\fs}$, we have \mbox{$\rwt{\fs}(\ed{A},\alpha,\ed{C})=[(\Xcom,\ed{\bullet})]$,} where $\fs(\alpha) = [\Xcom]$ and the idempotent \mbox{$\ed{\bullet}\colon \Xcom\to \Xcom$} is a lift of \mbox{$(\ed{A}, \ed{C}) \colon \alpha \to \alpha$}. Notice that \mbox{$\ft(\Gamma(\alpha)) = [\SFd{\com} \Xcom]$} as $(\SF,\Gamma)$ is $n$-exangulated. Moreover, the morphism \mbox{$\SFd{\com}\ed{\bullet}\colon \SFd{\com}\Xcom \to \SFd{\com}\Xcom$} is an idempotent lifting \mbox{$(\SF \ed{A}, \SF \ed{C})\colon \Gamma(\alpha)\to\Gamma(\alpha)$}. We hence see that $\rwt{\ft}(\rwt{\Gamma}(\ed{A},\alpha,\ed{C}))$ is given by the class
\[
[\begin{tikzcd}[column sep=2.6cm, ampersand replacement=\&]
    (\SF A,\SF \ed{A}) 
        \arrow{r}{\rwt{\SF}(\ed{1},\ed{1}\dd{0},\ed{A})} 
    \& (\SF \Xd{1},\SF \ed{1})
        \arrow{r}{\rwt{\SF}(\ed{2},\ed{2}\dd{1},\ed{1})} 
    \&\cdots 
        \arrow{r}{\rwt{\SF}(\ed{C},\ed{C}\dd{n},\ed{n})} 
    \& (\SF C,\SF \ed{C})
        \end{tikzcd}],
\]
which is $[\rwtSFd{\com}(\Xcom,\ed{\bullet})]$. This finishes the proof. 
\end{proof}

To consider $n$-exangulated categories as $0$-cells in a $2$-category, we use the notion of a morphism between $n$-exangulated functors. This is captured by the following definition.

\begin{defn}
\label{def:n-exangulated-natural-transformation}
(See \cite[Def.~4.1]{Bennett-TennenhausHauglandSandoyShah-the-category-of-extensions-and-a-characterisation-of-n-exangulated-functors}.) 
Suppose that \mbox{$(\SF,\Gamma), (\SG,\Lambda)\colon(\CC,\BE,\fs)\to(\CD,\BF,\ft)$} are $n$-exangulated functors. An \emph{$n$-exangulated natural transformation} \mbox{$(\SF,\Gamma)\Rightarrow(\SG,\Lambda)$} is a natural transformation $\Beth\colon\SF\Rightarrow\SG$ of additive functors such that, for all $A,C\inn\CC$ and each \mbox{$\alpha\inn\BE(C,A)$}, the pair $(\Bethd{A},\Bethd{C})$ satisfies 
\begin{equation}
\label{eqn:n-exangulated-natural-transformation-property}
    (\Bethd{A}\braced{\BF}  \Gamma(\alpha)
    = (\Bethd{C}\braceu{\BF}  \Lambda(\alpha).
\end{equation}
\end{defn}

Notice that equation \eqref{eqn:n-exangulated-natural-transformation-property} means that $(\Bethd{A},\Bethd{C})$ is a morphism \mbox{$\Gamma(\alpha) \to \Lambda(\alpha)$} of $\BF$-extensions.

For a natural transformation $\Beth\colon \SF\Rightarrow\SG$ of additive functors \mbox{$\SF,\SG\colon\CC\to\CD$}, recall that the completion \mbox{$\rwt{\Beth}\colon\rwt{\SF}\Rightarrow\rwt{\SG}$} is given by \mbox{$\rwtBethd{(X,e)} = (\SG e,(\SG e)\Bethd{X}\SF e,\SF e)$} for \mbox{$(X,e)\inn\rwt{\CC}$}. The proposition below shows that the completion of an $n$-exangulated natural transformation is again $n$-exangulated. 

\begin{lem}
\label{prop:completion-of-n-ex-naturaltr}
Suppose $\Beth \colon (\SF,\Gamma) \Rightarrow (\SG,\Lambda)$ is $n$-exangulated. Then $\rwt{\Beth}$ is an $n$-exangulated natural transformation $(\rwt{\SF},\rwt{\Gamma}) \Rightarrow (\rwt{\SG},\rwt{\Lambda})$. 
\end{lem}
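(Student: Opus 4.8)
The plan is to verify the defining condition \eqref{eqn:n-exangulated-natural-transformation-property} of \cref{def:n-exangulated-natural-transformation} for $\rwt{\Beth}$, now read over the $n$-exangulated functors $(\rwt{\SF},\rwt{\Gamma})$ and $(\rwt{\SG},\rwt{\Lambda})$ supplied by \cref{prop:completion-of-n-ex-functor}. Since the discussion in \cref{sec:idempotent-completion-exact-cats} already shows that $\rwt{\Beth}\colon\rwt{\SF}\Rightarrow\rwt{\SG}$ is a natural transformation of additive functors, the only remaining task is to check that, for all $(A,\ed{A}),(C,\ed{C})\inn\rwt{\CC}$ and each $\rwt{\BE}$-extension $(\ed{A},\alpha,\ed{C})\inn\rwt{\BE}((C,\ed{C}),(A,\ed{A}))$, the pair $(\rwtBethd{(A,\ed{A})},\rwtBethd{(C,\ed{C})})$ is a morphism of $\rwt{\BF}$-extensions from $\rwt{\Gamma}(\ed{A},\alpha,\ed{C})$ to $\rwt{\Lambda}(\ed{A},\alpha,\ed{C})$.

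First I would unwind both sides of the required identity using \cref{def:completion-of-Gamma}, the bifunctor action of $\rwt{\BF}$ from \cref{def:bifunctor-on-IC-of-exangulated-category}, and the completion formula $\rwtBethd{(X,e)}=(\SG e,(\SG e)\Bethd{X}\SF e,\SF e)$. The pushforward of $\rwt{\Gamma}(\ed{A},\alpha,\ed{C})=(\SF\ed{A},\Gamma(\alpha),\SF\ed{C})$ along $\rwtBethd{(A,\ed{A})}$ and the pullback of $\rwt{\Lambda}(\ed{A},\alpha,\ed{C})=(\SG\ed{A},\Lambda(\alpha),\SG\ed{C})$ along $\rwtBethd{(C,\ed{C})}$ both come out as triples with outer entries $\SG\ed{A}$ and $\SF\ed{C}$. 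As these outer entries already agree, the whole statement collapses to a single equality of the two middle $\rwt{\BF}$-components inside $\BF(\SF C,\SG A)$.

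The crux is to identify both middle components with the common value $\gamma\deff(\Bethd{A}\braced{\BF}\Gamma(\alpha)=(\Bethd{C}\braceu{\BF}\Lambda(\alpha)$, the last equality being precisely the hypothesis that $\Beth$ is $n$-exangulated. The ordinary naturality of $\Beth$ at $\ed{A}$ and $\ed{C}$ gives $(\SG\ed{A})\Bethd{A}=\Bethd{A}\SF\ed{A}$ and $\Bethd{C}\SF\ed{C}=(\SG\ed{C})\Bethd{C}$, which (with idempotency of $\ed{A},\ed{C}$) lets me slide the idempotents appearing in the middle entries of $\rwtBethd{(A,\ed{A})}$ and $\rwtBethd{(C,\ed{C})}$ onto a single side. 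The resulting idempotent factors $\SF\ed{A}$ and $\SG\ed{C}$ are then absorbed via naturality of $\Gamma$ and $\Lambda$ together with the defining property $(\ed{A}\braced{\BE}\alpha=\alpha=(\ed{C}\braceu{\BE}\alpha$ of the $\rwt{\BE}$-extension, exactly as in the computation recorded after \cref{def:completion-of-Gamma}, and using functoriality of the $\BF$-actions throughout. This reduces both middle components to $\gamma$.

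I do not expect a genuine obstacle, since all computations are elementary; the only real care lies in the idempotent bookkeeping. The point to watch is that, in the $\rwt{\BF}$-action, the identity morphism on $(\SF C,\SF\ed{C})$ (respectively $(\SG A,\SG\ed{A})$) carries middle component $\SF\ed{C}$ (respectively $\SG\ed{A}$) rather than a genuine identity, so the unwound middle components acquire idempotent factors on \emph{both} sides. These are removed by the same mechanism, amounting to the identities $(\SG\ed{A}\braced{\BF}\gamma=\gamma=(\SF\ed{C}\braceu{\BF}\gamma$. Once one checks that every idempotent is genuinely absorbed rather than merely rearranged, the desired equation reduces exactly to the $n$-exangulated hypothesis on $\Beth$, and the verification is complete.
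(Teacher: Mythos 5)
Your proposal is correct and follows essentially the same route as the paper's proof: unwind both sides via \cref{def:bifunctor-on-IC-of-exangulated-category} and \cref{def:completion-of-Gamma}, observe the outer entries $\SG\ed{A}$ and $\SF\ed{C}$ already agree, and then equate the middle components using naturality of $\Gamma$ and $\Lambda$ (to absorb $\SF\ed{A}$, $\SG\ed{C}$ via $(\ed{A}\braced{\BE}\alpha=\alpha=(\ed{C}\braceu{\BE}\alpha$), the $n$-exangulated hypothesis on $\Beth$, and naturality of $\Beth$ to slide the remaining idempotents. The only cosmetic difference is organisational — you identify both middle components with the common value $\gamma$, whereas the paper writes one chain of equalities from the pushforward side to the pullback side passing through that same value — and your remark about the identities of $\rwt{\CC}$ contributing idempotent (rather than identity) factors is exactly the bookkeeping point the paper's computation handles implicitly.
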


\begin{proof}
Consider an $\rwt{\BE}$-extension $(\ed{A},\alpha,\ed{C})\inn\rwt{\BE}((C,\ed{C}),(A,\ed{A}))$. Using that \mbox{$(\ed{A}\braced{\BE}  \alpha=\alpha$}, we get \mbox{$(\SF \ed{A}\braced{\BF} \Gamma(\alpha)=\Gamma(\alpha)$} by the naturality of $\Gamma$. Similarly, we obtain \mbox{$\Lambda(\alpha)=(\SG \ed{C}\braceu{\BF} \Lambda(\alpha)$}. Since $\Beth$ is $n$-exangulated, we have \mbox{$(\Bethd{A}\braced{\BF}  \Gamma(\alpha)=(\Bethd{C}\braceu{\BF} \Lambda(\alpha)$}, while naturality of $\Beth$ yields \mbox{$(\SG \ed{C})\Bethd{C}=\Bethd{C}\SF \ed{C}$}. Combining these observations gives
\[
\begin{split}
(\SG \ed{A}\braced{\BF}  (\Bethd{A}\braced{\BF}  (\SF \ed{A}\braced{\BF}  \Gamma(\alpha)=
(\SG \ed{A}\braced{\BF}  (\Bethd{A}\braced{\BF}  \Gamma(\alpha)=
(\SG \ed{A}\braced{\BF}  (\Bethd{C}\braceu{\BF}  \Lambda(\alpha)=
(\Bethd{C}\braceu{\BF}  (\SG \ed{A}\braced{\BF}  \Lambda(\alpha)
\\
=
(\Bethd{C}\braceu{\BF}  \Lambda(\alpha)
=
(\Bethd{C}\braceu{\BF}  (\SG \ed{C}\braceu{\BF}  \Lambda(\alpha)
=
(\Bethd{C}\SF \ed{C}\braceu{\BF}  \Lambda(\alpha)
=
(\Bethd{C}\SF \ed{C}\braceu{\BF}  (\SG \ed{C}\braceu{\BF}  \Lambda(\alpha).
\end{split}
\]
Hence, we have that 
\begin{align*}
(\rwtBethd{(A,\ed{A})}\braced{\rwt{\BF}}\rwt{\Gamma}(\ed{A},\alpha,\ed{C})
&
= 
(\SG \ed{A},(\SG \ed{A})\Bethd{A}\SF \ed{A},\SF \ed{A}\braced{\rwt{\BF}}
	(\SF \ed{A},\Gamma(\alpha),\SF \ed{C}) && \text{(definitions of $\rwt{\Beth}, \rwt{\Gamma}$)}\\
	&= (\SG \ed{A},(\SG \ed{A}\braced{\BF}  (\Bethd{A}\braced{\BF}  (\SF \ed{A}\braced{\BF}  \Gamma(\alpha),\SF \ed{C}) &&\text{(\cref{def:bifunctor-on-IC-of-exangulated-category})}\\
	&= (\SG \ed{A},((\SG \ed{C}) \Bethd{C}\SF \ed{C}\braceu{\BF}  \Lambda(\alpha),\SF \ed{C})&&\text{(as above)}\\
	&= (\rwtBethd{(C,\ed{C})}\braceu{\rwt{\BF}} \, \rwt{\Lambda}(\ed{A},\alpha,\ed{C})&&\text{(definitions of $\rwt{\Beth}, \rwt{\Lambda}$),}
\end{align*}
as required.
\end{proof}

We now introduce $n$-exangulated analogues of the collections described in Notation~\ref{not:Exactcat-definition}.

\begin{notn}
\label{not:n-Exang-definition}
Following \cite[Not.~4.14]{Bennett-TennenhausHauglandSandoyShah-the-category-of-extensions-and-a-characterisation-of-n-exangulated-functors}, we write $\Exang{n}$ for the collection of $0$-cells, $1$-cells and $2$-cells consisting of $n$-exangulated categories, $n$-exangulated functors and $n$-exangulated natural transformations between these functors, respectively. Restricting $0$-cells in $\Exang{n}$ to small $n$-exangulated categories yields the $2$-category $\exang{n}$; see \cite[Cor.~4.15]{Bennett-TennenhausHauglandSandoyShah-the-category-of-extensions-and-a-characterisation-of-n-exangulated-functors}. We furthermore write $\ICExang{n}$ and $\ICexang{n}$ when only considering idempotent complete $0$-cells in $\Exang{n}$ and $\exang{n}$, respectively, and note that also $\ICexang{n}$ is a $2$-category. As before, we use a subscript $i \inn \{0,1,2\}$ to denote $i$-cells in the collections described above. 
\end{notn}

We conclude this section by constructing the $2$-functor $\nexangcomplete$ used in \cref{thm-C} in \cref{sec:introduction}. 

\begin{defn}
\label{def:exangcomplete}
Let $\nexangcomplete = (\nexangcompletecell{0},\nexangcompletecell{1},\nexangcompletecell{2}) \colon \Exang{n} \to \ICExang{n}$ be defined by the assignments $\nexangcompletecell{i}\colon \Exangcell{n}{i} \to \ICExangcell{n}{i}$, where:
\[
\nexangcompletecell{0}  (\CC,\BE,\fs)  
    \deff (\rwt{\CC},\rwt{\BE},\rwt{\fs}\mspace{1mu}),
\hspace{1cm}
\nexangcompletecell{1}  (\SF,\Gamma)  
    \deff (\rwt{\SF},\rwt{\Gamma}),
\hspace{1cm}
\nexangcompletecell{2} ( \Beth ) 
    \deff \rwt{\Beth}.
\]
\end{defn}

The result below is an $n$-exangulated analogue of \cref{thm:2-functor-of-exact-categories}. 

\begin{thm}
\label{thm:2-functor-of-n-exangulated-categories}
The following statements hold for the assignments $\nexangcompletecell{0}$, $\nexangcompletecell{1}$ and $\nexangcompletecell{2}$.

\begin{enumerate}[label=\textup{(\roman*)}] 
    \item\label{item:exangcomplete-0-and-1-give-functor-on-underyling-1-categories}
    The pair $(\nexangcompletecell{0}, \nexangcompletecell{1})$ defines a functor $\Exang{n}\to\ICExang{n}$.
    \item\label{item:exangcomplete-1-and-2-give-functor} 
    The pair $(\nexangcompletecell{1}, \nexangcompletecell{2})$ defines a functor 
    \[
    \Exang{n}((\CC,\BE,\fs), (\CD,\BF,\ft)) \to \ICExang{n}((\rwt{\CC},\rwt{\BE},\rwt{\fs}\mspace{1mu}), (\rwt{\CD},\rwt{\BF},\rwt{\ft}\mspace{1mu}))
    \]
    whenever $(\CC,\BE,\fs)$ and $(\CD,\BF,\ft)$ are $n$-exangulated categories.
    \item\label{item:exangcomplete2-preserves-horizontal-composition} 
    The assignment $\nexangcompletecell{2}$ preserves horizontal composition. 
\end{enumerate}
In particular, restricting $\nexangcomplete$ to small  categories yields a $2$-functor $\exang{n} \to \ICexang{n}$. 
\end{thm}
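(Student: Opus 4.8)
The plan is to mirror the proof of \cref{thm:2-functor-of-exact-categories} as closely as possible, offloading everything that concerns only the underlying additive data to B\"{u}hler's results \cite{Buhler-exact-categories} and to the exact case, while invoking the lemmas of this section to control the extra $n$-exangulated structure. The three well-definedness statements are already in place: by \cite[Thm.~A]{KlapprothMsapatoShah-Idempotent-completions-of-n-exangulated-categories} the triplet $(\rwt{\CC},\rwt{\BE},\rwt{\fs}\mspace{1mu})$ is an $n$-exangulated category, and it is idempotent complete since $\rwt{\CC}$ is, so $\nexangcompletecell{0}$ lands in $\ICExang{n}$; \cref{prop:completion-of-n-ex-functor} shows $\nexangcompletecell{1}$ sends an $n$-exangulated functor to one; and \cref{prop:completion-of-n-ex-naturaltr} does the same for $\nexangcompletecell{2}$. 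It therefore remains only to verify the various compatibilities.

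For \ref{item:exangcomplete-0-and-1-give-functor-on-underyling-1-categories}, I would check functoriality of $(\nexangcompletecell{0},\nexangcompletecell{1})$ at the level of $1$-cells. Preservation of identities reduces to $\rwt{\id{\CC}}=\id{\rwt{\CC}}$, which is the additive case, together with the observation that the completion of the identity natural transformation on $\BE$ is the identity on $\rwt{\BE}$, immediate from \cref{def:completion-of-Gamma}. For composites, recall that $(\SL,\Phi)\circ(\SF,\Gamma)=(\SL\SF,\Phid{\SF\times\SF}\vertcomp\Gamma)$. Here $\rwt{\SL\SF}=\rwt{\SL}\,\rwt{\SF}$ is immediate from \eqref{eqn:IC-of-functor}, so the remaining point is that completion respects the natural-transformation component, namely $\rwt{(\Phid{\SF\times\SF}\vertcomp\Gamma)}=\rwt{\Phi}_{\rwt{\SF}\times\rwt{\SF}}\vertcomp\rwt{\Gamma}$. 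Evaluating both sides on an $\rwt{\BE}$-extension $(\ed{A},\alpha,\ed{C})$ by means of \cref{def:completion-of-Gamma} yields $(\SL\SF \ed{A},\Phi(\Gamma(\alpha)),\SL\SF \ed{C})$ in each case, so the two agree.

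For \ref{item:exangcomplete-1-and-2-give-functor} and \ref{item:exangcomplete2-preserves-horizontal-composition}, the key observation is that $\nexangcompletecell{2}$ is literally the completion of additive natural transformations from \cref{sec:idempotent-completion-exact-cats}: an $n$-exangulated natural transformation is an additive natural transformation satisfying the extra condition \eqref{eqn:n-exangulated-natural-transformation-property}, and its image under $\nexangcompletecell{2}$ is exactly the additive completion $\rwt{\Beth}$. Hence compatibility with identities, with vertical composition and with horizontal composition all follow verbatim from \cite[Rem.~6.8]{Buhler-exact-categories}, precisely as in the proof of \cref{thm:2-functor-of-exact-categories}; the only additional input is \cref{prop:completion-of-n-ex-naturaltr}, ensuring the result is again $n$-exangulated. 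This gives \ref{item:exangcomplete-1-and-2-give-functor} and \ref{item:exangcomplete2-preserves-horizontal-composition}, and restricting to small $0$-cells then produces the $2$-functor $\exang{n}\to\ICexang{n}$.

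I expect the main (though still essentially routine) obstacle to be the compatibility of $\nexangcompletecell{1}$ with composition at the level of the $\Gamma$-data, since $\rwt{\Gamma}$ is \emph{not} an instance of the additive-completion construction but is defined separately in \cref{def:completion-of-Gamma}. One must therefore carry out the evaluation on a typical extension $(\ed{A},\alpha,\ed{C})$ directly, rather than appeal to \cite[Rem.~6.8]{Buhler-exact-categories} as for $\nexangcompletecell{2}$. Everything else is bookkeeping that either reduces to the additive and exact cases or has already been isolated in \cref{prop:completion-of-n-ex-functor,prop:completion-of-n-ex-naturaltr}.
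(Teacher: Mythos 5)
Your proposal is correct and follows essentially the same route as the paper: well-definedness via \cref{prop:completion-of-n-ex-functor} and \cref{prop:completion-of-n-ex-naturaltr}, with the remaining compatibilities reduced to the exact-category case and \cite[Rem.~6.8]{Buhler-exact-categories}, since $\nexangcompletecell{2}$ is literally the additive completion of natural transformations. The only differences are cosmetic: you spell out the composition check for the $\Gamma$-components in part \ref{item:exangcomplete-0-and-1-give-functor-on-underyling-1-categories} (which the paper dismisses as ``straightforward''), while the paper additionally makes explicit the citations guaranteeing that $\Exang{n}((\CC,\BE,\fs),(\CD,\BF,\ft))$ is a category and that $n$-exangulated natural transformations are closed under horizontal composition, facts you implicitly absorb into the pre-established structure of $\Exang{n}$.
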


\begin{proof}
It follows from the discussion and results above, in particular Lemmas~\ref{prop:completion-of-n-ex-functor} and \ref{prop:completion-of-n-ex-naturaltr}, that the assignments are well-defined. 
The rest of this proof is similar to the proof of \cref{thm:2-functor-of-exact-categories}. 
Functoriality in \ref{item:exangcomplete-0-and-1-give-functor-on-underyling-1-categories} is straightforward to check. 
For \ref{item:exangcomplete2-preserves-horizontal-composition}, note that $n$-exangulated natural transformations are closed under horizontal composition by \cite[Prop.~4.8]{Bennett-TennenhausHauglandSandoyShah-the-category-of-extensions-and-a-characterisation-of-n-exangulated-functors}, and then again apply \cite[Rem.~6.8]{Buhler-exact-categories}. 
Lastly, for \ref{item:exangcomplete-1-and-2-give-functor}, notice first that \mbox{$\Exang{n}((\CC,\BE,\fs), (\CD,\BF,\ft))$}, and hence also \mbox{$\ICExang{n}((\rwt{\CC},\rwt{\BE},\rwt{\fs}\mspace{1mu}), (\rwt{\CD},\rwt{\BF},\rwt{\ft}\mspace{1mu}))$}, is indeed a category by \cite[Prop.~4.12]{Bennett-TennenhausHauglandSandoyShah-the-category-of-extensions-and-a-characterisation-of-n-exangulated-functors}. 
Part \ref{item:exangcomplete-1-and-2-give-functor} then follows from \cite[Rem.~6.8]{Buhler-exact-categories} and noting that, for an $n$-exangulated functor $(\SF,\Gamma)$, the identity $\id{(\SF,\Gamma)}$ is just $\id{\SF}$ (see \cite[Def.~4.3]{Bennett-TennenhausHauglandSandoyShah-the-category-of-extensions-and-a-characterisation-of-n-exangulated-functors}).
\end{proof}


\section{The category of extensions and idempotent completion}
\label{sec:cat-of-extensions-and-splitting-idempotents}

We assume throughout  \cref{sec:cat-of-extensions-and-splitting-idempotents} that $\CC$ is an additive category and that \mbox{$\BE \colon \CCu{\op}\times\CC \to \Ab$} is a biadditive functor. 
Note in particular that any $n$-exangulated category $(\CC,\BE,\fs)$ gives rise to such a pair $(\CC,\BE)$ by ignoring the realisation $\fs$. In Subsection~\ref{subsec:cat-of-extensions} we  recall the definition of the category $\catext{\BE}{\CC}$ of extensions associated to  $(\CC,\BE)$, before proving \cref{prop-A} from \cref{sec:introduction}. Building on this result, our ultimate goal is to show that, for the  biadditive functor \mbox{$\rwt{\BE}\colon \rwt{\CC}^{\op}\times\rwt{\CC} \to \Ab$} from \cref{def:bifunctor-on-IC-of-exangulated-category}, the category $\catext{\rwt{\BE}}{\rwt{\CC}}$  of extensions of the idempotent completion  is equivalent to the idempotent completion $\rwt{\catext{\BE}{\CC}}$ of $\catext{\BE}{\CC}$. 
These two categories are described explicitly in Subsection~\ref{subsec:cat-of-extensions-of-idempotent-completion} and Subsection~\ref{subsec:idempotent-completion-of-cat-of-extensions}, respectively, culminating in a proof of \cref{thm-B} from \cref{sec:introduction}. In \cref{example-matrices} we provide an algebraic example exhibiting an application of  \cref{thm:cat-of-extensions-of-IC-isomorphic-to-IC-of-cat-of-extensions}. 


\subsection{The category of extensions}
\label{subsec:cat-of-extensions}

The category of extensions associated to $(\CC,\BE)$ is denoted by $\catext{\BE}{\CC}$. The objects of $\catext{\BE}{\CC}$ are $\BE$-extensions, and the morphisms are morphisms of $\BE$-extensions. Recall from \cref{sec:idempotent-completion-n-exangulated-cats} that this means that an object is an element $\alpha\inn\BE(C,A)$ for some objects $A,C\inn\CC$, while a morphism from $\alpha \inn \BE(C,A)$ to $\beta \inn \BE(D,B)$ is given by a pair $(a,c)$ of morphisms $a\colon A\to B$ and $c\colon C\to D$ in $\CC$ satisfying $\ad{\BE}\alpha = \cu{\BE}\beta$. 

As shown in \cite[Prop.~3.2]{Bennett-TennenhausHauglandSandoyShah-the-category-of-extensions-and-a-characterisation-of-n-exangulated-functors}, one can define an exact structure $\CXd{\BE}$ on  $\catext{\BE}{\CC}$ as follows. Let $\alpha\inn\BE(C,A)$, $\beta\inn\BE(D,B)$ and $\gamma\inn\BE(G,E)$ be objects in $\catext{\BE}{\CC}$. A sequence 
\begin{equation*}
\label{eqn:conflation-in-exact-structure}
\begin{tikzcd}
\alpha \arrow{r}{(a,c)}
    & \beta \arrow{r}{(b,d)}
    & \gamma
\end{tikzcd}
\end{equation*}
of composable morphisms in $\catext{\BE}{\CC}$ lies in the class $\CXd{\BE}$ if and only if the morphisms $a$ and $c$ in $\CC$ are both sections with $b=\cok a$ and $d=\cok c$.

By definition, a category is idempotent complete provided any idempotent endomorphism splits. Note also that a morphism of $\BE$-extensions $(\ed{A},\ed{C})\colon \alpha\to\alpha$ for $\alpha \inn \BE(C,A)$ is an idempotent in $\catext{\BE}{\CC}$ if and only if both $\ed{A}\colon A \to A$ and $\ed{C} \colon C \to C$ are idempotents in $\CC$. Hence, \cref{prop-A} follows from \cref{prop:idempotents-split-category-of-extensions}. The authors are grateful to Dixy Msapato for pointing out \cite[Lem.~3.23]{Msapato-the-karoubi-envelope-and-weak-idempotent-completion-of-an-extriangulated-category}, which motivated the proof of the result below. 

\begin{prop}
\label{prop:idempotents-split-category-of-extensions} 
Let $\alpha\inn\BE(C,A)$ and  suppose $(\ed{A},\ed{C})\inn \Endd{\catext{\BE}{\CC}}(\alpha)$ is an idempotent. Then $\ed{A}$ and $\ed{C}$ split in $\CC$ if and only if $(\ed{A},\ed{C})$ splits in $\catext{\BE}{\CC}$. 
\end{prop}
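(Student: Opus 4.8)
The plan is to prove the two implications separately, with the reverse implication being essentially formal and the forward one carrying all the content.

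First I would dispatch the ``only if'' direction. Composition in $\catext{\BE}{\CC}$ is computed componentwise, and the identity on $\alpha$ is $(\id{A},\id{C})$. Hence if $(\ed{A},\ed{C})$ splits in $\catext{\BE}{\CC}$ through some $\beta\inn\BE(D,B)$ via morphisms $(r_A,r_C)\colon\alpha\to\beta$ and $(s_A,s_C)\colon\beta\to\alpha$ with $(s_A,s_C)\circ(r_A,r_C)=(\ed{A},\ed{C})$ and $(r_A,r_C)\circ(s_A,s_C)=\id{\beta}$, then reading off each coordinate immediately gives $s_A r_A=\ed{A}$, $r_A s_A=\id{B}$, $s_C r_C=\ed{C}$ and $r_C s_C=\id{D}$. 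Thus $\ed{A}$ and $\ed{C}$ split in $\CC$.

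For the ``if'' direction, suppose $\ed{A}=s_A r_A$ and $\ed{C}=s_C r_C$ with $r_A s_A=\id{B}$ and $r_C s_C=\id{D}$, where $r_A\colon A\to B$, $s_A\colon B\to A$, $r_C\colon C\to D$ and $s_C\colon D\to C$. The natural candidate splitting object is the extension obtained by transporting $\alpha$ along these maps, so I would set $\beta\deff\BE(s_C,B)(\BE(C,r_A)(\alpha))\inn\BE(D,B)$ and take the candidate morphisms $(r_A,r_C)\colon\alpha\to\beta$ and $(s_A,s_C)\colon\beta\to\alpha$. Granting that these are genuine morphisms of $\BE$-extensions, the splitting is immediate: componentwise composition yields $(s_A,s_C)\circ(r_A,r_C)=(s_A r_A,s_C r_C)=(\ed{A},\ed{C})$ and $(r_A,r_C)\circ(s_A,s_C)=(r_A s_A,r_C s_C)=(\id{B},\id{D})=\id{\beta}$.

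The crux is therefore verifying that $(r_A,r_C)$ and $(s_A,s_C)$ satisfy the defining compatibility of a morphism of $\BE$-extensions, namely $\BE(C,r_A)(\alpha)=\BE(r_C,B)(\beta)$ for the former and $\BE(D,s_A)(\beta)=\BE(s_C,A)(\alpha)$ for the latter. Each is proved by expanding the definition of $\beta$ and repeatedly using bifunctoriality of $\BE$ to commute its two variables and to collapse composites; for instance $\BE(r_C,B)\BE(s_C,B)=\BE(s_C r_C,B)=\BE(\ed{C},B)$, and $\BE(C,s_A)\BE(C,r_A)=\BE(C,s_A r_A)=\BE(C,\ed{A})$. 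After these manipulations one is reduced, in the first case, to showing $\BE(C,r_A)(\BE(\ed{C},A)(\alpha))=\BE(C,r_A)(\alpha)$ and, in the second, to showing $\BE(s_C,A)(\BE(C,\ed{A})(\alpha))=\BE(s_C,A)(\alpha)$. This is exactly where I would invoke the hypothesis that $(\ed{A},\ed{C})$ is an endomorphism of $\alpha$ in $\catext{\BE}{\CC}$, i.e.\ the identity $\BE(\ed{C},A)(\alpha)=\BE(C,\ed{A})(\alpha)$: substituting it trades the pullback along $\ed{C}$ for the pushforward along $\ed{A}$ (or conversely), after which the relations $r_A\ed{A}=r_A s_A r_A=r_A$ and $\ed{C}s_C=s_C r_C s_C=s_C$ finish each computation with one more application of functoriality. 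This interchange between the two coordinates, possible precisely because $(\ed{A},\ed{C})$ is a morphism of $\BE$-extensions and not merely a pair of idempotents, is the only non-formal ingredient of the argument and the step I expect to require the most care.
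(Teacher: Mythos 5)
Your proposal is correct and follows essentially the same route as the paper: both directions are handled identically, with the same candidate splitting object $\beta = \BE(s_C,B)(\BE(C,r_A)(\alpha))$ (the paper's $\rd{\BE}\vu{\BE}\alpha$, equal to yours by bifunctoriality), the same candidate morphisms, and the same key step of trading $\BE(\ed{C},A)(\alpha)$ for $\BE(C,\ed{A})(\alpha)$ via the hypothesis that $(\ed{A},\ed{C})$ is a morphism of $\BE$-extensions. The only difference is presentational: the paper verifies one of the two morphism conditions explicitly and declares the other analogous, whereas you sketch both.
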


\begin{proof}
$(\Rightarrow)$\;\;
Assume that $\ed{A}$ and $\ed{C}$ split in $\CC$. As $\ed{A}$ splits, there exist morphisms $r \colon A \to B$ and $s \colon B \to A$ such that $sr=\ed{A}$ and $rs=\id{B}$. Similarly, there exist $u \colon C \to D$ and $v \colon D \to C$ with $vu=\ed{C}$ and $uv=\id{D}$, because $\ed{C}$ splits. Consider the $\BE$-extension $\rd{\BE}\vu{\BE}\alpha\inn\BE(D,B)$. We see that $(s,v)\colon \rd{\BE}\vu{\BE}\alpha \to \alpha$ is a morphism in $\catext{\BE}{\CC}$, as
\[
\ss{\BE}(\rd{\BE}\vu{\BE}\alpha) = \vu{\BE}(sr\braced{\BE}  \alpha = \vu{\BE}(e_A\braced{\BE}  \alpha = \vu{\BE}(e_C\braceu{\BE}  \alpha = \vu{\BE}(vu\braceu{\BE}  \alpha = (vuv\braceu{\BE}  \alpha = \vu{\BE}\alpha.
\]
Analogously, one can show that $(r,u)\colon \alpha \to \rd{\BE}\vu{\BE}\alpha$ is a morphism of $\BE$-extensions. Notice that $(s,v)\circ(r,u) = (\ed{A},\ed{C})$ and $(r,u)\circ(s,v) = (\id{B},\id{D})$, which is the identity on $\rd{\BE}\vu{\BE}\alpha$. This gives a splitting of $(\ed{A},\ed{C})$, as required.  

$(\Leftarrow)$\;\; 
If $(\ed{A},\ed{C})$ splits, then there exist $\beta \inn \BE(D,B)$ and  morphisms $(r,u)\colon\alpha\to\beta$ and $(s,v)\colon\beta\to\alpha$ in $\catext{\BE}{\CC}$ such that $(s,v)\circ(r,u) = (\ed{A},\ed{C})$ and \mbox{$(r,u)\circ(s,v) = (\id{B},\id{D})$}. These equations yield splittings of $\ed{A}$ and $\ed{C}$ in $\CC$.
\end{proof}

We finish this subsection by deducing \cref{cor-KRS-property}, showing that given certain assumptions on $\CC$, the Krull--Remak--Schmidt property for $\CC$ implies the same property for $\catext{\BE}{\CC}$. In order to see this, we first recall some terminology.

For the rest of \Cref{subsec:cat-of-extensions}, let $\ring$ be a commutative ring. The additive category $\CC$ is said to be \emph{$\ring$-linear} if $\CC(X,Y)$ is an $\ring$-module for all $X,Y\inn \CC$, and we have \mbox{$(\lambda g)f=\lambda (gf)=g(\lambda f)$} for all $\lambda \inn \ring$ and all composable morphisms $f$ and $g$ in $\CC$. When $\CC$ is $\ring$-linear, the bifunctor $\BE$ is $\ring$-\emph{bilinear} provided that each abelian group $\BE(C,A)$ has the structure of an $\ring$-module, and we have \mbox{$\BE(\lambda c,a)=\lambda\BE(c,a)=\BE(c,\lambda a)$} for all $\lambda \inn \ring$ and any morphisms $a$ and $c$ in $\CC$. Recall that an $\ring$-linear category $\CC$ is called \emph{$\Hom$-finite (over $\ring$)} if each $\ring$-module $\CC(X,Y)$ has  finite length (see e.g.\ \cite[Sec.~5]{Krause-KS-cats-and-projective-covers}).  

\begin{prop}
\label{lem-k-linear-hom-finite-preserved-by-cat-of-exts}   
If $\CC$ is $\ring$-linear and $\BE$ is $\ring$-bilinear, then $\catext{\BE}{\CC}$ is also $\ring$-linear. If in addition $\CC$ is $\Hom$-finite, then so is $\catext{\BE}{\CC}$. 
\end{prop}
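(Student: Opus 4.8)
The plan is to transport the $\ring$-module structure on the hom-sets of $\CC$ to the hom-sets of $\catext{\BE}{\CC}$ componentwise. Recall that, for $\BE$-extensions $\alpha\inn\BE(C,A)$ and $\beta\inn\BE(D,B)$, a morphism $(a,c)\colon\alpha\to\beta$ is by definition a pair in $\CC(A,B)\times\CC(C,D)$ satisfying $\ad{\BE}\alpha=\cu{\BE}\beta$, and composition is computed componentwise. I would therefore define, for $\lambda\inn\ring$, scalar multiplication by $\lambda\cdot(a,c)\deff(\lambda a,\lambda c)$, and addition of morphisms by $(a,c)+(a',c')\deff(a+a',c+c')$, using the module structures on $\CC(A,B)$ and $\CC(C,D)$.

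The first point to verify is that these two operations stay inside $\catext{\BE}{\CC}(\alpha,\beta)$. For scalar multiplication this uses the $\ring$-bilinearity of $\BE$: one has $\BE(C,\lambda a)(\alpha)=\lambda\bigl(\ad{\BE}\alpha\bigr)$ and $\BE(\lambda c,B)(\beta)=\lambda\bigl(\cu{\BE}\beta\bigr)$, so applying $\lambda$ to the equation $\ad{\BE}\alpha=\cu{\BE}\beta$ shows that $(\lambda a,\lambda c)$ again satisfies the defining relation and hence is a morphism $\alpha\to\beta$. Closure under addition follows in the same way from the additivity of $\BE$ in each variable, and $(0,0)$ is clearly a morphism, so $\catext{\BE}{\CC}(\alpha,\beta)$ is an $\ring$-submodule of $\CC(A,B)\times\CC(C,D)$. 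It then remains to confirm the compatibility condition $(\lambda(b,d))\circ(a,c)=\lambda\bigl((b,d)\circ(a,c)\bigr)=(b,d)\circ(\lambda(a,c))$; since composition is componentwise and equals $(ba,dc)$, this is immediate from the corresponding identities in the $\ring$-linear category $\CC$. This establishes the first assertion.

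For the second assertion I would exploit the submodule embedding just obtained: the assignment $(a,c)\mapsto(a,c)$ realises $\catext{\BE}{\CC}(\alpha,\beta)$ as an $\ring$-submodule of $\CC(A,B)\times\CC(C,D)$. When $\CC$ is $\Hom$-finite, both $\CC(A,B)$ and $\CC(C,D)$ have finite length, hence so does their product, and a submodule of a finite-length $\ring$-module again has finite length. Therefore each $\catext{\BE}{\CC}(\alpha,\beta)$ has finite length, so $\catext{\BE}{\CC}$ is $\Hom$-finite. I do not anticipate a genuine obstacle here: the only real content is that the two elementary operations preserve morphisms of $\BE$-extensions, which is a direct consequence of the $\ring$-biadditivity of $\BE$, after which the finite-length claim reduces to the standard fact that submodules of finite-length modules have finite length.
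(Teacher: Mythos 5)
Your proposal is correct and follows essentially the same route as the paper: scalar multiplication is defined componentwise, the $\ring$-bilinearity of $\BE$ is used to check that $(\lambda a,\lambda c)$ is again a morphism of $\BE$-extensions, compatibility with composition follows from $\ring$-linearity of $\CC$, and $\Hom$-finiteness is deduced by realising $\catext{\BE}{\CC}(\alpha,\beta)$ as an $\ring$-submodule of $\CC(A,B)\oplus\CC(C,D)$, whose length is finite. The only cosmetic difference is that you also verify closure under addition explicitly, which the paper implicitly absorbs into the pre-existing additive structure of $\catext{\BE}{\CC}$.
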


\begin{proof}
Fix objects $\alpha$ and $\beta$ in $\catext{\BE}{\CC}$, say where $\alpha\inn\BE(C,A)$ and $\beta\inn \BE(D,B)$. Given a morphism \mbox{$(a,c)\colon \alpha\to\beta$} in $\catext{\BE}{\CC}$, we have 
\[
(\lambda a\braced{\BE}  \alpha=
\BE(\id{C},\lambda a)(\alpha)=
\lambda \BE(\id{C},a)(\alpha)=
\lambda \BE(c,\id{A})(\beta)=
\BE(\lambda c,\id{A})(\beta)
=(\lambda c\braceu{\BE}  \beta
\]
as $\BE$ is $\ring$-bilinear. This means that $(\lambda a,\lambda c)\colon \alpha\to \beta$ is a morphism in $\catext{\BE}{\CC}$, and we take this to be the action of $\lambda$ on $(a,c)$. Using that $\CC$ is $\ring$-linear, it is straightforward to check that the $\Hom$-sets of $\catext{\BE}{\CC}$ are $\ring$-modules under this multiplication. Consider another morphism \mbox{$(b,d)\colon \beta \to \gamma$} in $\catext{\BE}{\CC}$. Since $\CC$ is $\ring$-linear, we have
\[
(b,d)(\lambda a,\lambda c)=(b(\lambda a),d(\lambda c))=(\lambda (ba),\lambda(dc))=((\lambda b)a,(\lambda d), c)=(\lambda b,\lambda d)(a,c),
\]
which is the action of $\lambda$ on $(b,d)\circ (a,c)$. This proves that $\catext{\BE}{\CC}$ is $\ring$-linear. 

The arguments above show that the collection of morphisms $\alpha\to \beta$ in $\catext{\BE}{\CC}$ defines an $\ring$-submodule of the direct sum $\CC(A,B)\oplus \CC(C,D)$. The length of this submodule is bounded above by the sum of the lengths of $\CC(A,B)$ and $\CC(C,D)$, proving the second assertion. 
\end{proof}

Recall that $\CC$ is said to be a \emph{Krull--Schmidt category} if every object decomposes into a finite direct sum of objects with local endomorphism rings. 

\begin{cor}
\label{cor-KRS-property}
Suppose that $\CC$ is $\ring$-linear, $\Hom$-finite and Krull--Schmidt, and that $\BE$ is $\ring$-bilinear. Then $\catext{\BE}{\CC}$ is also $\ring$-linear, $\Hom$-finite and Krull--Schmidt.
\end{cor}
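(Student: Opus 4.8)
The plan is to obtain this as a bookkeeping combination of \cref{lem-k-linear-hom-finite-preserved-by-cat-of-exts}, \cref{prop-A}, and a standard characterisation of Krull--Schmidt categories. First I would dispatch two of the three conclusions directly: since $\CC$ is $\ring$-linear and $\BE$ is $\ring$-bilinear, \cref{lem-k-linear-hom-finite-preserved-by-cat-of-exts} tells us that $\catext{\BE}{\CC}$ is $\ring$-linear, and since $\CC$ is moreover $\Hom$-finite, the same proposition gives that $\catext{\BE}{\CC}$ is $\Hom$-finite. It therefore remains only to establish that $\catext{\BE}{\CC}$ is Krull--Schmidt.

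For the Krull--Schmidt property I would invoke Krause's criterion \cite[Cor.~4.4]{Krause-KS-cats-and-projective-covers}: an additive category is Krull--Schmidt precisely when idempotents split in it and the endomorphism ring of each object is semiperfect. The splitting of idempotents is already within reach. Indeed, every Krull--Schmidt category is idempotent complete, so the hypothesis on $\CC$ guarantees that $\CC$ is idempotent complete, whence $\catext{\BE}{\CC}$ is idempotent complete by \cref{prop-A} (equivalently \cref{prop:idempotents-split-category-of-extensions}).

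The one point that requires an actual verification is that each endomorphism ring $\Endd{\catext{\BE}{\CC}}(\alpha)$ is semiperfect, and here I would lean on the $\Hom$-finiteness of $\catext{\BE}{\CC}$ obtained in the first paragraph. The ring $\Endd{\catext{\BE}{\CC}}(\alpha)$ has finite length as an $\ring$-module, and every left ideal of it is in particular an $\ring$-submodule; hence any descending chain of left ideals is a descending chain of $\ring$-submodules and so stabilises. Thus $\Endd{\catext{\BE}{\CC}}(\alpha)$ is left Artinian, and every left Artinian ring is semiperfect. Feeding this together with the splitting of idempotents into Krause's criterion yields that $\catext{\BE}{\CC}$ is Krull--Schmidt, completing the argument.

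I do not expect a genuine obstacle: the statement is essentially an assembly of results already established in the paper with a general characterisation of Krull--Schmidt categories. The only step needing care is the implication from $\Hom$-finiteness to semiperfect endomorphism rings, and that is the passage through left Artinian rings described above, which is the part I would write out in full.
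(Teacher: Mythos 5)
Your proposal is correct, and its skeleton coincides with the paper's: both first apply \cref{lem-k-linear-hom-finite-preserved-by-cat-of-exts} to obtain $\ring$-linearity and $\Hom$-finiteness of $\catext{\BE}{\CC}$, then use that $\CC$ is idempotent complete (being Krull--Schmidt) together with \cref{prop-A} to conclude that $\catext{\BE}{\CC}$ is idempotent complete, and finally convert idempotent completeness plus $\Hom$-finiteness into the Krull--Schmidt property. The only divergence is in that last conversion. The paper cites the packaged equivalence that an $\ring$-linear $\Hom$-finite category is Krull--Schmidt if and only if it is idempotent complete (\cite[Cor.~A.2]{ChenYeZhang-Algebras-of-derived-dimension-zero} or \cite[Thm.~6.1]{Shah-Krull-Remak-Schmidt}), whereas you invoke Krause's criterion \cite[Cor.~4.4]{Krause-KS-cats-and-projective-covers} and verify the semiperfectness hypothesis by hand: each $\Endd{\catext{\BE}{\CC}}(\alpha)$ has finite length over $\ring$, its left ideals are $\ring$-submodules (since $\lambda f = (\lambda\id{\alpha})f$), so it is left Artinian and hence semiperfect. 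That verification is sound and is essentially how the cited equivalence is itself proved, so your argument amounts to unfolding the paper's black box; the paper's version is shorter, while yours is more self-contained and makes explicit where $\Hom$-finiteness actually enters.
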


\begin{proof}
Note that an $R$-linear $\Hom$-finite category  is Krull--Schmidt if and only if it is idempotent complete; see e.g.\ \cite[Cor.~A.2]{ChenYeZhang-Algebras-of-derived-dimension-zero} or \cite[Thm.~6.1]{Shah-Krull-Remak-Schmidt}. Using this, the result now follows by combining \cref{prop-A} and \cref{lem-k-linear-hom-finite-preserved-by-cat-of-exts}.
\end{proof}


\subsection{The category of extensions of the idempotent completion}
\label{subsec:cat-of-extensions-of-idempotent-completion}

As recalled in \cref{def:bifunctor-on-IC-of-exangulated-category}, there is a biadditive functor
$\rwt{\BE}\colon \rwt{\CC}^{\op}\times\rwt{\CC} \to \Ab$  defined canonically from $\BE$ where $\rwt{\CC}$ is the idempotent completion of $\CC$. As in Subsection~\ref{subsec:cat-of-extensions} 
we can consider the category $\catext{\rwt{\BE}}{\rwt{\CC}}$ of extensions associated to $(\rwt{\CC},\rwt{\BE})$. We give an explicit description of $\catext{\rwt{\BE}}{\rwt{\CC}}$ and the exact structure $\CXd{\rwt{\BE}}$ below.\medskip

\noindent\underline{\smash{Objects}}: The objects of $\catext{\rwt{\BE}}{\rwt{\CC}}$ are of the form $(\ed{A},\alpha,\ed{C})\inn\rwt{\BE}((C,\ed{C}),(A,\ed{A}))$ for $(C,\ed{C})$ and $(A,\ed{A})$ in $\rwt{\CC}$. In particular, the morphisms \mbox{$\ed{A} \colon A \to A$} and \mbox{$\ed{C} \colon C \to C$} in $\CC$ are idempotents,  and $\alpha\inn\BE(C,A)$ is an $\BE$-extension satisfying \mbox{$(\ed{A}\braced{\BE}  \alpha  = \alpha = (\ed{C}\braceu{\BE} \alpha$}.\medskip

\noindent\underline{\smash{Morphisms}}: A morphism 
$(\ed{A},\alpha, \ed{C})
    \to (\ed{B},\beta, \ed{D})
$
in $\catext{\rwt{\BE}}{\rwt{\CC}}$ is a pair \mbox{$((\ed{B},a,\ed{A}),(\ed{D},c,\ed{C}))$}, 
where 
\mbox{$(\ed{B},a,\ed{A})\colon (A,\ed{A}) \to (B,\ed{B})$} 
and 
\mbox{$(\ed{D},c,\ed{C})\colon (C,\ed{C}) \to (D,\ed{D})$} are morphisms in $\rwt{\CC}$ and 
\mbox{$(\ed{B},a,\ed{A}\braced{\rwt{\BE}}(\ed{A},\alpha, \ed{C}) 
    = (\ed{D},c,\ed{B}\braceu{\rwt{\BE}}(\ed{B},\beta, \ed{D})$}. 
This means that \mbox{$(a,c)\colon \alpha \to \beta$} is a morphism of $\BE$-extensions, \mbox{$a\ed{A} = a = \ed{B}a$} and \mbox{$c\ed{C}=c=\ed{D}c$}.\medskip

\noindent\underline{\smash{Composition}}: Composition in $\catext{\rwt{\BE}}{\rwt{\CC}}$ is defined component-wise. Explicitly, the composition of \mbox{$((\ed{B},a,\ed{A}),(\ed{D},c,\ed{C}))$} and \mbox{$((\ed{E},b,\ed{B}),(\ed{G},d,\ed{D}))$} is \mbox{$((\ed{E},ba,\ed{A}),(\ed{G},dc,\ed{C}))$}.\medskip

\noindent\underline{\smash{Identity morphisms}}: The identity on $(\ed{A},\alpha,\ed{C})$ in $\catext{\rwt{\BE}}{\rwt{\CC}}$ is \mbox{$((\ed{A},\ed{A},\ed{A}),(\ed{C},\ed{C},\ed{C}))$}.\medskip

\noindent\underline{\smash{Preadditivity}}: The addition of morphisms is component-wise. Explicitly, the addition of \mbox{$((\ed{B},a,\ed{A}),(\ed{D},c,\ed{C}))$} and \mbox{$((\ed{B},a',\ed{A}),(\ed{D},c',\ed{C}))$} is \mbox{$((\ed{B},a+a',\ed{A}),(\ed{D},c+c',\ed{C}))$}.\medskip

\noindent\underline{Exact structure}:
The collection $\CXd{\rwt{\BE}}$ consists of kernel-cokernel pairs
\[
\begin{tikzcd}[column sep=3.7cm]
(\ed{A}, \alpha, \ed{C})
	\arrow{r}{((\ed{B},a,\ed{A}),(\ed{D},c,\ed{C}))}
&(\ed{B}, \beta, \ed{D})
	\arrow{r}{((\ed{E},b,\ed{B}),(\ed{G},d,\ed{D}))} 
&(\ed{E}, \gamma, \ed{G})
\end{tikzcd}
\]
in $\catext{\rwt{\BE}}{\rwt{\CC}}$ such that $(\ed{B},a,\ed{A})$ and $(\ed{D},c,\ed{C})$ are sections with \mbox{$(\ed{E},b,\ed{B}) = \cok(\ed{B},a,\ed{A})$} and \mbox{$(\ed{G},d,\ed{D}) = \cok(\ed{D},c,\ed{C})$}. These conditions are equivalent to the sequences
\begin{align*}
    \begin{tikzcd}[column sep=2cm,ampersand replacement=\&]
		(A,\ed{A}) 
			\arrow{r}{(\ed{B},a,\ed{A})}
		\& (B,\ed{B}) 
			\arrow{r}{(\ed{E},b,\ed{B})}
		\& (E,\ed{E}), 
    \end{tikzcd} \\
    \begin{tikzcd}[column sep=2cm,ampersand replacement=\&]
		(C,\ed{C}) 
			\arrow{r}{(\ed{D},c,\ed{C})}
		\& (D,\ed{D}) 
			\arrow{r}{(\ed{G},d,\ed{D})}
		\& (G,\ed{G})\phantom{,} 
    \end{tikzcd}
\end{align*}
being split exact in $\rwt{\CC}$.\medskip

We have an immediate corollary of \cref{prop-A}.

\begin{cor}
\label{cor:catex-of-tilde-C-is-IC} The exact category $(\catext{\rwt{\BE}}{\rwt{\CC}}, \CXd{\rwt{\BE}})$ is idempotent complete.
\end{cor}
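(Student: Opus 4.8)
The plan is to apply \cref{prop-A} not to the original pair $(\CC,\BE)$ but to the completed pair $(\rwt{\CC},\rwt{\BE})$. The crucial observation is that $\catext{\rwt{\BE}}{\rwt{\CC}}$ is, by construction, nothing other than the category of extensions associated to $(\rwt{\CC},\rwt{\BE})$, and that—as emphasised in \cref{sec:introduction}—the results of \cref{sec:cat-of-extensions-and-splitting-idempotents}, and in particular \cref{prop-A}, hold for an \emph{arbitrary} pair consisting of an additive category together with a biadditive functor to $\Ab$. I would therefore first record that $(\rwt{\CC},\rwt{\BE})$ is indeed such a pair: the category $\rwt{\CC}$ is additive (see the discussion following \cref{defn:karoubi-envelope}), and $\rwt{\BE}\colon\rwtCCu{\op}\times\rwt{\CC}\to\Ab$ is biadditive (see the discussion following \cref{def:bifunctor-on-IC-of-exangulated-category}).

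Next I would invoke the fact, again from the discussion immediately after \cref{defn:karoubi-envelope}, that $\rwt{\CC}$ is itself idempotent complete. This is precisely the hypothesis needed to apply \cref{prop-A} with $(\rwt{\CC},\rwt{\BE})$ in place of $(\CC,\BE)$, and its conclusion is exactly that the underlying additive category of $\catext{\rwt{\BE}}{\rwt{\CC}}$ has split idempotents. Since an exact category is called idempotent complete precisely when its underlying additive category is, and since $(\catext{\rwt{\BE}}{\rwt{\CC}},\CXd{\rwt{\BE}})$ is a genuine exact category by \cite[Prop.~3.2]{Bennett-TennenhausHauglandSandoyShah-the-category-of-extensions-and-a-characterisation-of-n-exangulated-functors} applied to $(\rwt{\CC},\rwt{\BE})$, this yields the claim.

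I expect no real obstacle here: the statement is genuinely a direct corollary, and the only thing to verify is that \cref{prop-A} applies verbatim to the completed pair. This reduces to the minor bookkeeping that $\rwt{\CC}$ and $\rwt{\BE}$ satisfy the standing hypotheses of \cref{sec:cat-of-extensions-and-splitting-idempotents} and that $\rwt{\CC}$ is idempotent complete, both of which were recorded earlier in the excerpt.
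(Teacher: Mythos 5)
Your proposal is correct and matches the paper's intended argument exactly: the paper states this as an ``immediate corollary'' of \cref{prop-A}, which (as the standing hypotheses of \cref{sec:cat-of-extensions-and-splitting-idempotents} allow) is applied to the pair $(\rwt{\CC},\rwt{\BE})$, using that $\rwt{\CC}$ is additive and idempotent complete and that $\rwt{\BE}$ is biadditive. Your additional bookkeeping---verifying these hypotheses and noting that the exact structure $\CXd{\rwt{\BE}}$ exists by the cited result applied to the completed pair---is precisely what the paper leaves implicit.
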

	

\subsection{The idempotent completion of the category of extensions}
\label{subsec:idempotent-completion-of-cat-of-extensions}

In contrast to what is done in Subsection~\ref{subsec:cat-of-extensions-of-idempotent-completion}, we may first consider the category of extensions associated to $(\CC,\BE)$, which forms part of an exact category $(\catext{\BE}{\CC},\CXd{\BE})$. Then we may take the idempotent completion, resulting in an idempotent complete exact category that we denote by \mbox{$(\rwt{\catext{\BE}{\CC}},\rwt{\CXd{\BE}})$}; see \cref{prop:inclusion-functor-into-Karoubi-envelope-is-ff-exact}. We proceed with an explicit description of $\rwt{\catext{\BE}{\CC}}$ and the exact structure $\rwt{\CXd{\BE}}$.\medskip

\noindent\underline{\smash{Objects}}: 
The objects of $\rwt{\catext{\BE}{\CC}}$ are of the form $(\alpha, (\ed{A},\ed{C}))$, where $\alpha\inn\BE(C,A)$ and \mbox{$(\ed{A},\ed{C})\colon \alpha \to \alpha$} is an idempotent morphism of $\BE$-extensions. This means that \mbox{$e_A \colon A \to A$} and \mbox{$\ed{C} \colon C \to C$} are idempotents in $\CC$ and that \mbox{$(\ed{A}\braced{\BE}  \alpha  = (\ed{C}\braceu{\BE} \alpha$}.\medskip

\noindent\underline{\smash{Morphisms}}: A morphism $(\alpha, (\ed{A},\ed{C})) \to (\beta, (\ed{B},\ed{D}))$ in $\rwt{\catext{\BE}{\CC}}$ is given by a triple \mbox{$((\ed{B},\ed{D}),(a,c),(\ed{A},\ed{C}))$}, where 
$(a,c) \colon \alpha \to \beta$ is a morphism of $\BE$-extensions and we have \mbox{$(a,c)(\ed{A},\ed{C}) = (a,c) = (\ed{B},\ed{D})(a,c)$}.\medskip

\noindent\underline{\smash{Composition}}: 
The composition of two composable morphisms \mbox{$((\ed{B},\ed{D}),(a,c),(\ed{A},\ed{C}))$} and \mbox{$((\ed{E},\ed{G}),(b,d),(\ed{B},\ed{D}))$} in $\rwt{\catext{\BE}{\CC}}$ is $((\ed{E},\ed{G}),(ba,dc),(\ed{A},\ed{C}))$.
\medskip

\noindent\underline{\smash{Identity morphisms}}: 
The identity on $(\alpha, (\ed{A},\ed{C}))$ in $\rwt{\catext{\BE}{\CC}}$ is \mbox{$((\ed{A},\ed{C}),(\ed{A},\ed{C}),(\ed{A},\ed{C}))$}.\medskip

\noindent\underline{\smash{Preadditivity}}: 
Let $((\ed{B},\ed{D}),(a,c),(\ed{A},\ed{C}))$ and $((\ed{B},\ed{D}),(a',c'),(\ed{A},\ed{C}))$ be morphisms from $(\alpha,(e_A,e_C))$ to $(\beta,(e_B,e_D))$ in $\rwt{\catext{\BE}{\CC}}$. The addition of these two morphisms is given by \mbox{$((\ed{B},\ed{D}),(a+a',c+c'),(\ed{A},\ed{C}))$}.
\medskip

\noindent\underline{Exact structure}:
The elements in $\rwt{\CXd{\BE}}$ are direct summands of images of elements in $\CXd{\BE}$ under the functor \mbox{$\SId{\catext{\BE}{\CC}}\colon \catext{\BE}{\CC} \to \rwt{\catext{\BE}{\CC}}$}; see the discussion before \cref{prop:inclusion-functor-into-Karoubi-envelope-is-ff-exact}. In other words, they are direct summands of kernel-cokernel pairs in $\rwt{\catext{\BE}{\CC}}$ of the form 
\[
\begin{tikzcd}[column sep=1cm]
(\alpha, (\id{A},\id{C}))
	\arrow{r}{p}
& (\beta, (\id{B},\id{D}))
	\arrow{r}{q}
& (\gamma, (\id{E},\id{G})), 
\end{tikzcd}
\]
where 
\[
\begin{array}{cc}
p \deff ((\id{B},\id{D}),(a,c),(\id{A},\id{C})), 
&
q \deff ((\id{E},\id{G}),(b,d),(\id{B},\id{D}))
\end{array}
\]
and
$
\begin{tikzcd}
\alpha \arrow{r}{(a,c)}
    & \beta \arrow{r}{(b,d)}
    & \gamma
\end{tikzcd}
$
is an element of $\CXd{\BE}$. 

\begin{warning}
\label{warning:categories-not-isomorphic}
Even if $(\alpha,(\ed{A},\ed{C}))$ is an object in $\rwt{\catext{\BE}{\CC}}$, the $\BE$-extension $\alpha\inn\BE(C,A)$ does \emph{not} necessarily satisfy $(\ed{A}\braced{\BE}  \alpha = \alpha$ or $(\ed{C}\braceu{\BE}  \alpha =\alpha$. For example, if $\alpha \neq 0$, then $(\alpha,(0,0))$ is an object in $\rwt{\catext{\BE}{\CC}}$, but  \mbox{$\zerod{\BE}\alpha = \zerou{\BE}\alpha = 0  \neq \alpha$}. In particular, this means that the objects of $\rwt{\catext{\BE}{\CC}}$ are not canonically in one-to-one correspondence with those of $\catext{\rwt{\BE}}{\rwt{\CC}}$.
\end{warning}

\cref{warning:categories-not-isomorphic} tells us that we cannot expect the categories $\catext{\rwt{\BE}}{\rwt{\CC}}$ and $\rwt{\catext{\BE}{\CC}}$ to be \emph{isomorphic} in general. Despite this, we prove that they are always equivalent. In the following, we use the Hebrew letters $\Mem$ (mem), $\Shin$ (shin) and $\Tsadi$ (tsadi). Note that the functor $\Shind{(\CC,\BE)}$ that we define in the proof of \cref{thm:cat-of-extensions-of-IC-isomorphic-to-IC-of-cat-of-extensions} below will be used to construct a natural transformation in \cref{sec:2-categorical-perspective}, which is the reason for our choice of notation.

\begin{thm}
\label{thm:cat-of-extensions-of-IC-isomorphic-to-IC-of-cat-of-extensions}
The exact categories $(\catext{\rwt{\BE}}{\rwt{\CC}}, \CXd{\rwt{\BE}})$ and $(\rwt{\catext{\BE}{\CC}}, \rwt{\CXd{\BE}})$ are equivalent. 
\end{thm}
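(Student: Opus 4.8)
The plan is to construct an explicit comparison functor and prove that it is an exact equivalence. Following the notation announced before the statement, I would define
\mbox{$\Shind{(\CC,\BE)}\colon \catext{\rwt{\BE}}{\rwt{\CC}} \to \rwt{\catext{\BE}{\CC}}$} on objects by \mbox{$\Shind{(\CC,\BE)}(\ed{A},\alpha,\ed{C}) \deff (\alpha,(\ed{A},\ed{C}))$} and on morphisms by \mbox{$((\ed{B},a,\ed{A}),(\ed{D},c,\ed{C})) \mapsto ((\ed{B},\ed{D}),(a,c),(\ed{A},\ed{C}))$}. This is well defined because an object $(\ed{A},\alpha,\ed{C})$ of $\catext{\rwt{\BE}}{\rwt{\CC}}$ already satisfies $(\ed{A}\braced{\BE}\alpha = (\ed{C}\braceu{\BE}\alpha$, which is exactly the condition for $(\ed{A},\ed{C})$ to be an idempotent endomorphism of $\alpha$ in $\catext{\BE}{\CC}$; and the framing identities $a\ed{A}=a=\ed{B}a$, $c\ed{C}=c=\ed{D}c$ are shared by both descriptions of morphisms. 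In the opposite direction I would record the quasi-inverse \mbox{$\Memd{(\CC,\BE)}(\alpha,(\ed{A},\ed{C})) \deff (\ed{A}, (\ed{A}\braced{\BE}\alpha, \ed{C})$}, whose target object is legitimate since $(\ed{A}\braced{\BE}\alpha$ is fixed by both idempotents. Functoriality and additivity of both assignments are immediate, as everything is defined component-wise.

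Next I would check that $\Shind{(\CC,\BE)}$ is an equivalence of additive categories. Fullness and faithfulness are transparent: a morphism of $\catext{\rwt{\BE}}{\rwt{\CC}}$ and its image carry literally the same data $(a,c)$ subject to the same two conditions, so $\Shind{(\CC,\BE)}$ is bijective on morphism collections. Essential surjectivity is the only point requiring an argument, and it is precisely where $\Shind{(\CC,\BE)}$ fails to be an isomorphism (cf.\ \cref{warning:categories-not-isomorphic}): given an arbitrary object $(\alpha,(\ed{A},\ed{C}))$ of $\rwt{\catext{\BE}{\CC}}$, the triple $((\ed{A},\ed{C}),(\ed{A},\ed{C}),(\ed{A},\ed{C}))$ furnishes an isomorphism onto $((\ed{A}\braced{\BE}\alpha,(\ed{A},\ed{C}))$, which lies in the image of $\Shind{(\CC,\BE)}$. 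Thus $\Shind{(\CC,\BE)}$ and $\Memd{(\CC,\BE)}$ are mutually quasi-inverse equivalences.

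It then remains to match the exact structures, i.e.\ to show $\Shind{(\CC,\BE)}$ both preserves and reflects conflations. Write \mbox{$G \deff \Memd{(\CC,\BE)}\circ \SId{\catext{\BE}{\CC}} \colon \catext{\BE}{\CC}\to \catext{\rwt{\BE}}{\rwt{\CC}}$} for the evident inclusion $\mu\mapsto(\id{A},\mu,\id{C})$. Since a $\CXd{\BE}$-conflation has both component sequences split exact in $\CC$, and any additive functor preserves split exactness, $G$ sends $\CXd{\BE}$-conflations to $\CXd{\rwt{\BE}}$-conflations, so $G$ is exact. Reflection of conflations by $\Shind{(\CC,\BE)}$ is equivalent to preservation by $\Memd{(\CC,\BE)}$, and this is the easy half: a $\rwt{\CXd{\BE}}$-conflation is by definition a direct summand of some $\SId{\catext{\BE}{\CC}}(\tau)$ with $\tau$ a $\CXd{\BE}$-conflation, and $\Memd{(\CC,\BE)}\circ\SId{\catext{\BE}{\CC}}=G$, so its image is a summand of $G(\tau)$; as $\catext{\rwt{\BE}}{\rwt{\CC}}$ is idempotent complete (\cref{cor:catex-of-tilde-C-is-IC}), the class $\CXd{\rwt{\BE}}$ is closed under direct summands of conflations, and we conclude.

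The main obstacle is the remaining half: that $\Shind{(\CC,\BE)}$ \emph{preserves} conflations. Here I would prove that every $\CXd{\rwt{\BE}}$-conflation $\sigma$ is a direct summand of $G(\tau)$ for some honest $\CXd{\BE}$-conflation $\tau$; granting this, $\Shind{(\CC,\BE)}(\sigma)$ is a summand of $\Shind{(\CC,\BE)}(G(\tau))\cong \SId{\catext{\BE}{\CC}}(\tau)$, hence lies in $\rwt{\CXd{\BE}}$ by definition. To construct $\tau$, I would use that the two component sequences of $\sigma$ split in $\rwt{\CC}$ to present the middle terms $(B,\ed{B})$ and $(D,\ed{D})$ as summands of the honest biproducts $(A\oplus E,\id{A\oplus E})$ and $(C\oplus G,\id{C\oplus G})$, take $\tau$ supported on the canonical split sequences $A\to A\oplus E\to E$ and $C\to C\oplus G\to G$ in $\CC$, and define its middle extension by transporting $\beta$ along the splitting morphisms; the carving idempotent is then $\ed{A}\oplus\ed{E}$ together with $\ed{C}\oplus\ed{G}$. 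The heart of the argument is verifying that $\tau$ is genuinely a $\CXd{\BE}$-conflation and that this idempotent cuts out exactly $\sigma$: this reduces to the morphism identities $\ad{\BE}\alpha=\cu{\BE}\beta$ and the analogue for $(b,d)$, combined with the split relations, and it crucially exploits that in $\catext{\rwt{\BE}}{\rwt{\CC}}$ one has $(\ed{B}\braced{\BE}\beta=\beta=(\ed{D}\braceu{\BE}\beta$ — the support condition that, as \cref{warning:categories-not-isomorphic} illustrates, is unavailable in $\rwt{\catext{\BE}{\CC}}$ and is exactly what makes the two categories merely equivalent rather than isomorphic.
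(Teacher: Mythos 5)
Your proposal is correct and takes essentially the same route as the paper's proof: the same functor $\Shind{(\CC,\BE)}$ with the same quasi-inverse (which the paper calls $\Tsadid{(\CC,\BE)}$, packaging your essential-surjectivity isomorphisms into a natural isomorphism $\Mem$), the same transport-to-split-form computation showing that the image of a $\CXd{\rwt{\BE}}$-conflation is a direct summand of the image under $\SId{\catext{\BE}{\CC}}$ of an honest $\CXd{\BE}$-conflation, and the same summand argument for the quasi-inverse direction. The only cosmetic difference is the justification that $\CXd{\rwt{\BE}}$ is closed under direct summands of conflations: you invoke idempotent completeness of $\catext{\rwt{\BE}}{\rwt{\CC}}$, whereas the paper cites \cite[Cor.~2.18]{Buhler-exact-categories}, a fact valid in any exact category.
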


\begin{proof}
We establish an exact functor
$\Shind{(\CC,\BE)} \colon 
        (\catext{\rwt{\BE}}{\rwt{\CC}}, \CXd{\rwt{\BE}})
            \to 
	(\rwt{\catext{\BE}{\CC}}, \rwt{\CXd{\BE}})
$
and an exact quasi-inverse 
$\Tsadid{(\CC,\BE)} \colon 
	(\rwt{\catext{\BE}{\CC}}, \rwt{\CXd{\BE}})
		\to 
	(\catext{\rwt{\BE}}{\rwt{\CC}}, \CXd{\rwt{\BE}})
$.

Define $\Shind{(\CC,\BE)}$ by
\[
\Shind{(\CC,\BE)}(\ed{A},\alpha,\ed{C}) 
    \deff (\alpha,(\ed{A},\ed{C}))
\]
on objects and 
\[
\Shind{(\CC,\BE)}((\ed{B},a,\ed{A}),(\ed{D},c,\ed{C})) 
	\deff ((\ed{B},\ed{D}),(a,c),(\ed{A},\ed{C}))
\]
on morphisms. By our explicit description of $\catext{\rwt{\BE}}{\rwt{\CC}}$ and $\rwt{\catext{\BE}{\CC}}$ in Subsection~\ref{subsec:cat-of-extensions-of-idempotent-completion} and Subsection~\ref{subsec:idempotent-completion-of-cat-of-extensions}, respectively, we see that $\Shind{(\CC,\BE)}$ is a well-defined additive functor.

Define $\Tsadid{(\CC,\BE)}$ by 
\[
\Tsadid{(\CC,\BE)} (\alpha,(\ed{A},\ed{C})) 
    \deff (\ed{A},(\ed{A}\braced{\BE}  \alpha,\ed{C})
\]
on objects and 
\[
\Tsadid{(\CC,\BE)} ((\ed{B},\ed{D}),(a,c),(\ed{A},\ed{C})) 
    \deff ((\ed{B},a,\ed{A}),(\ed{D},c,\ed{C}))
\] 
on morphisms. Note that $\Tsadid{(\CC,\BE)}$ is well-defined on objects, since
\[
(\ed{C}\braceu{\BE}  (\ed{A}\braced{\BE}  \alpha 
    = (\ed{A}\braced{\BE}  (\ed{C}\braceu{\BE}  \alpha 
    = (\ed{A}\braced{\BE}  (\ed{A}\braced{\BE}  \alpha 
    = (\ed{A}\braced{\BE}  \alpha.
\]
It is straightforward to check that $\Tsadid{(\CC,\BE)}$ is well-defined on morphisms, and that it is an additive functor.

The composite $\Tsadid{(\CC,\BE)}\circ\Shind{(\CC,\BE)}$ is the identity functor $\id{\catext{\rwt{\BE}}{\rwt{\CC}}}$ of $\catext{\rwt{\BE}}{\rwt{\CC}}$, as \mbox{$(\ed{A}\braced{\BE}  \alpha = \alpha$} whenever $(\ed{A},\alpha,\ed{C})\inn\catext{\rwt{\BE}}{\rwt{\CC}}$. For each object $(\alpha,(\ed{A},\ed{C}))$ in $\rwt{\catext{\BE}{\CC}}$, set
\[
\Memd{(\alpha,(\ed{A},\ed{C}))} 
	\deff ((\ed{A},\ed{C}),(\ed{A},\ed{C}),(\ed{A},\ed{C}))
	\colon 
	(\alpha,(\ed{A},\ed{C}))
		\to 
	((\ed{A}\braced{\BE}  \alpha,(\ed{A},\ed{C})).
\] 
This is an isomorphism in $\rwt{\catext{\BE}{\CC}}$. Checking that 
\[
\Mem 
	\deff \Setd{\Memd{(\alpha,(\ed{A},\ed{C}))}}{(\alpha,(\ed{A},\ed{C}))\inn\rwt{\catext{\BE}{\CC}}}
	\colon 
	\id{\rwt{\catext{\BE}{\CC}}} 
		\Longrightarrow 
	\Shind{(\CC,\BE)}\circ\Tsadid{(\CC,\BE)}
\]
is natural is straightforward. 

It remains to show that $\Shind{(\CC,\BE)}$ and $\Tsadid{(\CC,\BE)}$ are exact functors. Recall that the direct sum of two objects $(X,\ed{X})$ and $(Y,\ed{Y})$ in $\rwt{\CC}$ is given by 
\[
(X,\ed{X})\oplus (Y,\ed{Y}) = (X\oplus Y,\ed{X}\oplus \ed{Y}) = \left(X\oplus Y,\begin{psmallmatrix}\ed{X} \amph 0 \\ 0 \amph \ed{Y}\end{psmallmatrix}\right).
\]

We first check that $\Shind{(\CC,\BE)}$ is exact. Let \begin{equation}\label{eqn:Shin-exact-original-seq}
\begin{tikzcd}[column sep=4cm]
(\ed{A}, \alpha, \ed{C})
	\arrow{r}{((\ed{B},a,\ed{A}),(\ed{D},c,\ed{C}))}
&(\ed{B}, \beta, \ed{D})
	\arrow{r}{((\ed{E},b,\ed{B}),(\ed{G},d,\ed{D}))} 
&(\ed{E}, \gamma, \ed{G})
\end{tikzcd}
\end{equation}
be an arbitrary element of $\CXd{\rwt{\BE}}$. The underlying sequences of \eqref{eqn:Shin-exact-original-seq} are split exact in $\rwt{\CC}$, so we may without loss of generality assume that \mbox{$B = A\oplus E$}, \mbox{$\ed{B} = \ed{A} \oplus \ed{E}$}, \mbox{$D = C\oplus G$}, \mbox{$\ed{D} = \ed{C} \oplus \ed{G}$}, \mbox{$(a,c) = (\begin{psmallmatrix}\ed{A}\\ 0\end{psmallmatrix}, \begin{psmallmatrix}\ed{C}\\ 0\end{psmallmatrix})$} and \mbox{$(b,d) = (\begin{psmallmatrix}0\amph \ed{E}\end{psmallmatrix}, \begin{psmallmatrix}0 \amph \ed{G}\end{psmallmatrix})$}. Applying $\Shind{(\CC,\BE)}$ to \eqref{eqn:Shin-exact-original-seq} then yields the sequence
\begin{equation}
\label{eqn:Shin-applied-to-original-seq}
\begin{tikzcd}[column sep=1cm]
(\alpha, (\ed{A},\ed{C}))
	\arrow{r}{r}
& (\beta, (\ed{A}\oplus \ed{E},\ed{C}\oplus \ed{G}))
	\arrow{r}{s}
& (\gamma, (\ed{E},\ed{G})),
\end{tikzcd}
\end{equation}
where 
\begin{align*}
r &\deff ((\begin{psmallmatrix}\ed{A} \amph 0 \\ 0 \amph \ed{E}\end{psmallmatrix},\begin{psmallmatrix}\ed{C} \amph 0 \\ 0 \amph \ed{G}\end{psmallmatrix}),(\begin{psmallmatrix}\ed{A} \\ 0\end{psmallmatrix},\begin{psmallmatrix}\ed{C} \\ 0\end{psmallmatrix}),(\ed{A},\ed{C})),\\
s &\deff ((\ed{E},\ed{G}),(\begin{psmallmatrix}0\amph \ed{E}\end{psmallmatrix}, \begin{psmallmatrix}0 \amph \ed{G}\end{psmallmatrix}),(\begin{psmallmatrix}\ed{A} \amph 0 \\ 0 \amph \ed{E}\end{psmallmatrix},\begin{psmallmatrix}\ed{C} \amph 0 \\ 0 \amph \ed{G}\end{psmallmatrix})). 
\end{align*}
We claim that $\eqref{eqn:Shin-applied-to-original-seq}$ is a direct summand of the sequence
\begin{equation}
\label{eqn:the-seq-that-should-lie-in-tildeX}
\begin{tikzcd}[column sep=1cm]
(\alpha, (\id{A},\id{C}))
	\arrow{r}{t}
& (\beta, (\id{A\oplus E}, \id{C\oplus G}))
	\arrow{r}{u}
& (\gamma, (\id{E},\id{G})), 
\end{tikzcd}
\end{equation}
where 
\begin{align*}
t &\deff ((\id{A\oplus E}, \id{C\oplus G}),(\begin{psmallmatrix}\id{A} \\ 0\end{psmallmatrix}, \begin{psmallmatrix}\id{C} \\ 0\end{psmallmatrix}),(\id{A},\id{C})), \\
u &\deff ( (\id{E},\id{G}),(\begin{psmallmatrix}0 \amph \id{E}\end{psmallmatrix},\begin{psmallmatrix}0 \amph \id{G}\end{psmallmatrix}),(\id{A\oplus E}, \id{C\oplus G}) ) .
\end{align*}
Notice first that  $(\begin{psmallmatrix}\id{A} \\ 0\end{psmallmatrix}, \begin{psmallmatrix}\id{C} \\ 0\end{psmallmatrix})$ is a morphism $\alpha \to \beta$ of $\BE$-extensions, since 
\[
\tensor*[]{\begin{psmallmatrix}\id{A} \\ 0\end{psmallmatrix}}{_{\BE}} \alpha = \tensor*[]{\begin{psmallmatrix}\id{A} \\ 0\end{psmallmatrix}}{_{\BE}} (e_A\braced{\BE}   \alpha = \tensor*[]{\begin{psmallmatrix}\ed{A} \\ 0\end{psmallmatrix}}{_{\BE}} \alpha = \tensor*[]{\begin{psmallmatrix}\ed{C} \\ 0\end{psmallmatrix}}{^{\BE}} \beta = \tensor*[]{\begin{psmallmatrix}\id{C} \\ 0\end{psmallmatrix}}{^{\BE}} \tensor*[]{\begin{psmallmatrix}\ed{C} & 0 \\ 0 & \ed{G}\end{psmallmatrix}}{^{\BE}}\beta = \tensor*[]{\begin{psmallmatrix}\id{C} \\ 0\end{psmallmatrix}}{_{\BE}} \beta.
\]
Similarly, the pair  $(\begin{psmallmatrix}0 \amph \id{E}\end{psmallmatrix},\begin{psmallmatrix}0 \amph \id{G}\end{psmallmatrix})$ is a morphism $\beta \to \gamma$ in $\catext{\BE}{\CC}$. In particular, this yields that \eqref{eqn:the-seq-that-should-lie-in-tildeX} is indeed a sequence in $\rwt{\catext{\BE}{\CC}}$. To verify that \eqref{eqn:Shin-applied-to-original-seq} is a direct summand of \eqref{eqn:the-seq-that-should-lie-in-tildeX}, notice that there is a section induced by the morphisms \mbox{$((\id{A},\id{C}),(\ed{A},\ed{C}),(\ed{A},\ed{C}))$}, \mbox{$((\id{A\oplus E}, \id{C \oplus G}),(\ed{A}\oplus \ed{E},\ed{C}\oplus \ed{G}),(\ed{A}\oplus \ed{E},\ed{C}\oplus \ed{G}))$} and \mbox{$((\id{E}, \id{G}),(\ed{E},\ed{G}),(\ed{E},\ed{G}))$}.

Thus, to finish the proof that $\Shind{(\CC,\BE)}$ is exact, it suffices to show that \eqref{eqn:the-seq-that-should-lie-in-tildeX} lies in $\rwt{\CXd{\BE}}$. For this, it is in turn enough to verify that 
\begin{equation}\label{eqn:seq-in-CX-BE}
\begin{tikzcd}[column sep=3cm]
\alpha
	\arrow{r}{(\begin{psmallmatrix}\id{A} \\ 0\end{psmallmatrix}, \begin{psmallmatrix}\id{C} \\ 0\end{psmallmatrix})}
& \beta 
	\arrow{r}{(\begin{psmallmatrix}0 \amph \id{E}\end{psmallmatrix},\begin{psmallmatrix}0 \amph \id{G}\end{psmallmatrix})}
&\gamma
\end{tikzcd}
\end{equation}
lies in $\CXd{\BE}$. By the arguments above, we already know that \eqref{eqn:seq-in-CX-BE} is a sequence of morphisms in $\catext{\BE}{\CC}$. As its underlying sequences are split exact in $\CC$, we have that \eqref{eqn:seq-in-CX-BE} lies in $\CXd{\BE}$.

We now show that $\Tsadid{(\CC,\BE)}$ is exact. Let 
\begin{equation}\label{eqn:Shin-inverse-exact-original-seq}
\begin{tikzcd}
(\alpha, (\ed{A},\ed{C}))
	\arrow{r}
& (\beta, (\ed{B},\ed{D}))
	\arrow{r}
& (\gamma, (\ed{E},\ed{G}))
\end{tikzcd}
\end{equation}
be a conflation in $\rwt{\CXd{\BE}}$. Consequently, we have that \eqref{eqn:Shin-inverse-exact-original-seq} is a direct summand of a sequence 
\begin{equation}\label{eqn:bigger-sequence-in-Shin-inverse}
\begin{tikzcd}
(\alpha', (\id{A'},\id{C'}))
	\arrow{r}
& (\beta', (\id{B'},\id{D'}))
	\arrow{r}
& (\gamma', (\id{E'},\id{G'})),
\end{tikzcd}
\end{equation}
which is the image under $\SId{\catext{\BE}{\CC}}$ of a kernel-cokernel pair 
\begin{equation}
\label{eqn:source-of-bigger-sequence-in-Shin-inverse}
\begin{tikzcd}[column sep=2cm]
\alpha'
	\arrow{r}{(a',c')}
& \beta'
	\arrow{r}{(b',d')}
& \gamma'
\end{tikzcd}
\end{equation}
in $\CXd{\BE}$. Apply $\Tsadid{(\CC,\BE)}$ to \eqref{eqn:bigger-sequence-in-Shin-inverse} to obtain
\begin{equation}\label{eqn:Shin-inverse-applied-to-bigger-sequence}
\begin{tikzcd}
(\id{A'},\alpha',\id{C'})
	\arrow{r}
& (\id{B'},\beta',\id{D'})
	\arrow{r}
& (\id{E'},\gamma',\id{G'}).
\end{tikzcd}
\end{equation}
We claim that \eqref{eqn:Shin-inverse-applied-to-bigger-sequence} lies in $\CXd{\rwt{\BE}}$. Since \eqref{eqn:source-of-bigger-sequence-in-Shin-inverse} belongs to $\CXd{\BE}$, its underlying sequences are split exact in $\CC$. As $\SId{\CC} \colon \CC \to \rwt{\CC}$ is an additive functor, the sequences 
\begin{align*}
		\begin{tikzcd}[column sep=3cm,ampersand replacement=\&]
		(A', \id{A'})
			\arrow{r}{(\id{B'},a',\id{A'})}
		\&(B', \id{B'})
			\arrow{r}{(\id{E'},b',\id{B'})}
		\&(E', \id{E'}),
		\end{tikzcd}\\
		\begin{tikzcd}[column sep=3cm,ampersand replacement=\&]
		(C', \id{C'})
			\arrow{r}{(\id{D'},c',\id{C'})}
		\&(D', \id{D'})
			\arrow{r}{(\id{G'},d',\id{D'})}
		\&(G', \id{G'})\phantom{,}
		\end{tikzcd}
\end{align*}
are thus split exact in $\rwt{\CC}$, and so \eqref{eqn:Shin-inverse-applied-to-bigger-sequence} lies in $\CXd{\rwt{\BE}}$. 

Since \eqref{eqn:Shin-inverse-exact-original-seq} is a direct summand of \eqref{eqn:bigger-sequence-in-Shin-inverse}, we know that $\Tsadid{(\CC,\BE)}\eqref{eqn:Shin-inverse-exact-original-seq}$ is a direct summand of \mbox{$\Tsadid{(\CC,\BE)}\eqref{eqn:bigger-sequence-in-Shin-inverse}=\eqref{eqn:Shin-inverse-applied-to-bigger-sequence}\inn\CXd{\rwt{\BE}}$}. Thus, by \cite[Cor.~2.18]{Buhler-exact-categories}, we deduce that $\Tsadid{(\CC,\BE)}\eqref{eqn:Shin-inverse-exact-original-seq}$ belongs to $\CXd{\rwt{\BE}}$, and hence $\Tsadid{(\CC,\BE)}$ is an exact functor. 
\end{proof}

We finish this section by demonstrating the use of \cref{thm:cat-of-extensions-of-IC-isomorphic-to-IC-of-cat-of-extensions} in a concrete example.

\begin{example}
\label{example-matrices}
Let $R$ be a unital ring. 
For integers $m,n\geq0$, let $\Mat{m,n}(R)$ denote the set of $m\times n$ matrices which, when $m,n>0$, have entries in $R$. Note that $\Mat{m,0}(R)$ consists of a single empty column vector of length $m$. Likewise, the set $\Mat{0,n}(R)$ consists of an empty length $n$ row vector. 
By declaring the image of \mbox{$\Mat{l,0}(R)\times \Mat{0,n}(R)\to \Mat{l,n}(R)$} to be the zero matrix, there is a function \mbox{$\Mat{l,m}(R)\times \Mat{m,n}(R)\to \Mat{l,n}(R)$} defined for any integers $l,m,n\geq 0$ that extends matrix multiplication. When $l=0$, the codomain of this function contains a unique element as noted above. Therefore, in this case, this function has the effect of changing the length of the empty length $m$ row vector to length $n$. Similarly, if $n=0$, then the corresponding function changes the length $m$ column vector to length $l$. 

Let $\CM$ be the category of \emph{rectangular matrices over} $R$, defined as follows. Objects of $\CM$ are rectangular matrices $X\inn \Mat{m,n}(R)$ for $m,n\geq0$, and a morphism from $X\inn \Mat{m,n}(R)$ to $Y\inn \Mat{p,q}(R)$ is defined by a pair of matrices $(A,B)\inn \Mat{q,n}(R)\times \Mat{p,m}(R)$ such that  $BX=YA$. Composition is defined by component-wise matrix multiplication. The identity of an object $X\inn \Mat{m,n}(R)$ is the pair $(\tensor*[]{I}{_{n}},\tensor*[]{I}{_{m}})$ of identity matrices. 

The category $\CM$ is preadditive, where the addition of morphisms is given component-wise. Furthermore, $\CM$ is in fact additive, where the direct sum $X\oplus Y$ of $X\inn \Mat{m,n}(R)$ and $Y\inn \Mat{p,q}(R)$ is given by the block matrix in $\Mat{m+p,n+q}(R)$ formed by taking $X$ and $Y$ in the diagonal blocks and $0$ elsewhere. 
That is,
\[
X\oplus Y=
\begin{pNiceArray}{c|c}
 \, X & \mathbf{0}\,\\
  \hline
  \, \mathbf{0} & Y\,
\end{pNiceArray}.
\]
If $m=0$, then $X$ is an empty row  and $X\oplus Y$ is found by inserting $n$ columns, all with entries equal to $0$,  to the left of $Y$. Likewise: if $n=0$, one inserts $0$-rows above $Y$; if $p=0$, one inserts $0$-columns to the right of $X$; and if $q=0$, one inserts $0$-rows below $X$. The zero object of $\CM$ is the unique element of $\Mat{0,0}(R)$. 

For integers $n,m\geq 0$, recall that $R^{m}\cong R^{n}$ as left (or, in fact, right) $R$-modules if and only if $AB=I_{m}$ and $BA=I_{n}$ for some $(A,B)\in \Mat{m,n}(R)\times \Mat{n,m}(R)$. 
In this case, let us write $m\sim n$. The ring $R$ is said to have the \emph{invariant basis number} (IBN) property provided that $m\sim n$ implies $m=n$; see \cite[p.\ 60]{Rotman}. Any one-sided noetherian ring has the IBN property \cite[Thm.~3.24]{Rotman}, as does any commutative ring \cite[Prop.~2.37]{Rotman}. The endomorphism ring of a  vector space of countably infinite  dimension does not have the IBN property \cite[Exa.~2.36]{Rotman}. 

For what remains of \cref{example-matrices}, we assume that $R$ has the IBN property. 
We now use \cref{thm:cat-of-extensions-of-IC-isomorphic-to-IC-of-cat-of-extensions} to compute the idempotent completion $\rwt{\CM}$. Consider first the category $\CC$ of finitely generated free $R$-modules, and let $\BE\colon \CCu{\op}\times \CC\to \Ab$ be the biadditive functor given by the $\Hom$-bifunctor $\CC(-,-)$. Using that any finitely generated free $R$-module is isomorphic to $\tensor*[]{R}{^{n}}$ for some $n\geq 0$, it is straightforward to check that $\CM$ is equivalent to the category $\catext{\BE}{\CC}$ of extensions, because $R$ has the IBN property. 

Applying \cref{thm:cat-of-extensions-of-IC-isomorphic-to-IC-of-cat-of-extensions} now gives $\rwt{\CM}\simeq \catext{\rwt{\BE}}{\rwt{\CC}}$. Observing that $\rwt{\BE} = \rwt{\CC}(-,-)$, it follows from \cite[Exa.~3.3]{Bennett-TennenhausHauglandSandoyShah-the-category-of-extensions-and-a-characterisation-of-n-exangulated-functors} that $\catext{\rwt{\BE}}{\rwt{\CC}}$ is equivalent to the \textit{arrow category} $\rwt{\CC}^{\rightarrow}$ of $\rwt{\CC}$, i.e.\ the category whose objects and morphisms are given by morphisms and commutative squares in $\rwt{\CC}$, respectively. For simplicity, assume from here that $R$ is commutative. It is well-known that the idempotent completion of $\CC$ is the category $\CP$ of finitely generated projective $R$-modules; see e.g.\ \cite[Exa.~2]{Borceux-Dejean-Cauchy-completion-in-category-theory}. We can thus conclude that $\rwt{\CM}$ is equivalent to the arrow category $\CP^{\rightarrow}$.
\end{example}


\section{\texorpdfstring{$2$}{2}-categorical compatibility}
\label{sec:2-categorical-perspective}

The aim of this section is to prove \cref{thm-C} in \cref{sec:introduction}, which asserts that the constructions and results we have exhibited so far are compatible in a $2$-categorical framework. 
We start by recalling the definition of the $2$-functor $\updave$ from \cite{Bennett-TennenhausHauglandSandoyShah-the-category-of-extensions-and-a-characterisation-of-n-exangulated-functors}.

Given an $n$-exangulated functor $(\SF,\Gamma)\colon(\CC,\BE,\fs)\to(\CD,\BF,\ft)$, it follows from \cite[Prop.~3.11, Thm.~3.17]{Bennett-TennenhausHauglandSandoyShah-the-category-of-extensions-and-a-characterisation-of-n-exangulated-functors} that there is a corresponding exact functor
\[
\SEd{(\SF,\Gamma)}\colon (\catext{\BE}{\CC},\CXd{\BE})\to (\catext{\BF}{\CD},\CXd{\BF}).
\] 
This functor is defined by $\SEd{(\SF,\Gamma)}(\alpha) = \Gamma(\alpha)$ on objects and by \mbox{$\SEd{(\SF,\Gamma)}(a,c) = (\SF a,\SF c)$} on morphisms. In addition, given an $n$-exangulated natural transformation \mbox{$\Beth \colon (\SF,\Gamma) \Rightarrow (\SG,\Lambda)$} of $n$-exangulated functors \mbox{$(\SF,\Gamma), (\SG,\Lambda) \colon(\CC,\BE,\fs)\to(\CD,\BF,\ft)$}, one can define a natural transformation \mbox{$\lan \Beth \ran \colon \SEd{(\SF,\Gamma)} \Rightarrow \SEd{(\SG,\Lambda)}$} given by \mbox{$\tensor*[]{\lan \Beth \ran}{_{\alpha}}  = (\Bethd{A},\Bethd{C})$} for $\alpha\inn\BE(C,A)$; see \cite[Thm.~4.19]{Bennett-TennenhausHauglandSandoyShah-the-category-of-extensions-and-a-characterisation-of-n-exangulated-functors}.

\begin{defn}
\label{def:updave}
(See \cite[Def.~4.20]{Bennett-TennenhausHauglandSandoyShah-the-category-of-extensions-and-a-characterisation-of-n-exangulated-functors}.)
Let $\updave = (\davecell{0},\davecell{1},\davecell{2}) \colon \Exang{n} \to \Exactcat$ be defined by the assignments $\davecell{i}\colon \Exangcell{n}{i} \to \Exactcatcell{i}$, where:
\[
\davecell{0}  (\CC,\BE,\fs)  
    \deff (\catext{\BE}{\CC},\CXd{\BE}),
\hspace{1cm}
\davecell{1}  (\SF,\Gamma)  
    \deff \SEd{(\SF,\Gamma)},
\hspace{1cm}
\davecell{2} ( \Beth ) 
    \deff\lan\Beth\ran.
\]
\end{defn}

It was shown in \cite[Thm.~4.22]{Bennett-TennenhausHauglandSandoyShah-the-category-of-extensions-and-a-characterisation-of-n-exangulated-functors} that $\updave$ defines a functor $\Exang{n} \to \Exactcat$ that satisfies the properties of a $2$-functor, and thus restricts to a genuine $2$-functor \mbox{$\exang{n} \to \exactcat$}. \cref{prop-A} allow us to restrict $\updave$ to idempotent complete categories. By abuse of notation, we write $\rwt{\updave}$ for this restriction, where it should be noted that $\rwt{\updave}$ is not the completion of the functor $\updave$ in the sense of \cref{sec:idempotent-completion-exact-cats} (see \eqref{eqn:IC-of-functor}). We have 
\[
\rwt{\updave} =  (\rwtdavecell{0},\rwtdavecell{1},\rwtdavecell{2})
	\colon \ICExang{n} \to \ICExactcat,
\]
where the assignment $\rwt{\updave}_i$ is defined as the restriction of $\davecell{i}$ for $i \inn \{0,1,2\}$ and satisfies the same properties. Again, we obtain a $2$-functor $\rwt{\updave} \colon \ICexang{n} \to \ICexactcat$ when restricting $0$-cells to small categories.

We frequently use that any $n$-exangulated category $(\CC,\BE,\fs)$ gives rise to the pair $(\CC,\BE)$ of an additive category equipped with a biadditive functor by forgetting the realisation $\fs$. In particular, for each \mbox{$(\CC,\BE,\fs)\inn\Exangcell{n}{0}$}, there is an exact equivalence 
\[
\Shind{(\CC,\BE)} \colon 
        (\catext{\rwt{\BE}}{\rwt{\CC}}, \CXd{\rwt{\BE}})
            \to 
	(\rwt{\catext{\BE}{\CC}}, \rwt{\CXd{\BE}}),
\]
which was defined in the proof of \textup{\cref{thm:cat-of-extensions-of-IC-isomorphic-to-IC-of-cat-of-extensions}}. Furthermore, recall that the functors $\exactcomplete$ and $\nexangcomplete$ were defined in Definitions~\ref{def:exactcomplete} and \ref{def:exangcomplete}, respectively. 

\begin{thm}
\label{thm:commutative-square} 
The collection $\Shin$ of exact equivalences $\Shind{(\CC,\BE)}$ for $(\CC,\BE,\fs)\inn\Exangcell{n}{0}$ defines a natural transformation $\rwt{\updave} \nexangcomplete \Rightarrow \exactcomplete \updave$ as indicated in the diagram 
\[
\begin{tikzcd}[column sep=1cm]
\Exang{n} 
	\arrow{d}[swap]{\nexangcomplete} 
	\arrow{r}{\updave} 
& \Exactcat 
	\arrow{d}{\exactcomplete}\\
\ICExang{n} 
	\arrow{r}[swap]{\rwt{\updave}} 
	\arrow[Rightarrow, 
			shorten <= 18pt,
            shorten >= 20pt,yshift={-5pt}, xshift={2pt}]{ur}{\Shin}
& \ICExactcat.
\end{tikzcd}
\]
\end{thm}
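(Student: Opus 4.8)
The plan is to verify directly that the assignment $(\CC,\BE,\fs)\mapsto\Shind{(\CC,\BE)}$ is a natural transformation between the functors $\rwt{\updave}\nexangcomplete$ and $\exactcomplete\updave$. First I would record how the two composites act on a $0$-cell: one has $(\rwt{\updave}\nexangcomplete)(\CC,\BE,\fs)=(\catext{\rwt{\BE}}{\rwt{\CC}},\CXd{\rwt{\BE}})$ and $(\exactcomplete\updave)(\CC,\BE,\fs)=(\rwt{\catext{\BE}{\CC}},\rwt{\CXd{\BE}})$, and $\Shind{(\CC,\BE)}$ is precisely an exact functor between these two exact categories. Since it is an exact equivalence by \cref{thm:cat-of-extensions-of-IC-isomorphic-to-IC-of-cat-of-extensions}, each component is a $1$-cell of the required type in $\ICExactcat$. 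It therefore remains to check the naturality square for each $1$-cell.

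For this, I would fix an $n$-exangulated functor $(\SF,\Gamma)\colon(\CC,\BE,\fs)\to(\CD,\BF,\ft)$ and unwind both composites. Going down then right, $\nexangcomplete$ sends $(\SF,\Gamma)$ to $(\rwt{\SF},\rwt{\Gamma})$ and then $\rwt{\updave}$ sends this to $\SEd{(\rwt{\SF},\rwt{\Gamma})}$, which by \cref{def:completion-of-Gamma} carries an object $(\ed{A},\alpha,\ed{C})$ to $(\SF\ed{A},\Gamma(\alpha),\SF\ed{C})$ and a morphism $((\ed{B},a,\ed{A}),(\ed{D},c,\ed{C}))$ to $((\SF\ed{B},\SF a,\SF\ed{A}),(\SF\ed{D},\SF c,\SF\ed{C}))$; postcomposing with $\Shind{(\CD,\BF)}$ then re-expresses these in the Karoubi format of $\rwt{\catext{\BF}{\CD}}$. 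Going right then down, $\Shind{(\CC,\BE)}$ sends the same data to $(\alpha,(\ed{A},\ed{C}))$ and $((\ed{B},\ed{D}),(a,c),(\ed{A},\ed{C}))$, to which $\exactcomplete\updave$ applies the completion $\rwt{\SEd{(\SF,\Gamma)}}$ of the exact functor $\SEd{(\SF,\Gamma)}$ via the recipe \eqref{eqn:IC-of-functor}. The key observation is that both routes yield $(\Gamma(\alpha),(\SF\ed{A},\SF\ed{C}))$ on the object and $((\SF\ed{B},\SF\ed{D}),(\SF a,\SF c),(\SF\ed{A},\SF\ed{C}))$ on the morphism, so that $\Shind{(\CD,\BF)}\circ(\rwt{\updave}\nexangcomplete)(\SF,\Gamma)=(\exactcomplete\updave)(\SF,\Gamma)\circ\Shind{(\CC,\BE)}$ as functors and the square commutes strictly.

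I expect the only real obstacle to be organisational rather than conceptual: one must carefully match the triple-of-triples bookkeeping of $\rwt{\catext{\BE}{\CC}}$ against the Karoubi-envelope bookkeeping of $\catext{\rwt{\BE}}{\rwt{\CC}}$, which is exactly the translation already encoded in the definitions of $\Shind{(\CC,\BE)}$ and $\Tsadid{(\CC,\BE)}$ in the proof of \cref{thm:cat-of-extensions-of-IC-isomorphic-to-IC-of-cat-of-extensions}. No coherence isomorphism enters, since the square commutes on the nose; this is a pleasant feature that makes the verification purely a matter of comparing the two evaluations term by term.

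Finally, to additionally obtain the $2$-natural transformation of \cref{thm-C}, I would check compatibility with $2$-cells. Given an $n$-exangulated natural transformation $\Beth\colon(\SF,\Gamma)\Rightarrow(\SG,\Lambda)$, both whiskerings, namely of $(\rwt{\updave}\nexangcomplete)(\Beth)$ by $\Shind{(\CD,\BF)}$ and of $(\exactcomplete\updave)(\Beth)$ by $\Shind{(\CC,\BE)}$, should send an object $(\ed{A},\alpha,\ed{C})$ to the morphism with components $(\SG\ed{A},(\SG\ed{A})\Bethd{A}\SF\ed{A},\SF\ed{A})$ and $(\SG\ed{C},(\SG\ed{C})\Bethd{C}\SF\ed{C},\SF\ed{C})$. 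This uses the formula for $\rwt{\Beth}$ recalled before \cref{prop:completion-of-n-ex-naturaltr}, the description of $\lan-\ran$ from \cref{def:updave}, and the horizontal-composition formula of \cref{notn:vertical-composition-and-horizontal-composition-of-2-cells}; as before, the two sides agree by a direct calculation, so no further subtlety arises.
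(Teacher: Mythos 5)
Your proposal is correct and follows essentially the same route as the paper: both verify that the naturality square $\Shind{(\CD,\BF)}\circ\SEd{(\rwt{\SF},\rwt{\Gamma})}=\rwt{\SEd{(\SF,\Gamma)}}\circ\Shind{(\CC,\BE)}$ commutes strictly by evaluating both composites on objects, obtaining $(\Gamma(\alpha),(\SF\ed{A},\SF\ed{C}))$, and on morphisms, obtaining $((\SF\ed{B},\SF\ed{D}),(\SF a,\SF c),(\SF\ed{A},\SF\ed{C}))$. Your final paragraph on compatibility with $2$-cells goes beyond the stated theorem and corresponds to the paper's separate \cref{prop:IC-2-natural}, with the same whiskering computation (up to reshuffling the pair-of-triples into the triple-of-pairs format of $\rwt{\catext{\BF}{\CD}}$).
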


\begin{proof}
In order to demonstrate the naturality of $\Shin$, we must show that 
\[
\begin{tikzcd}[column sep=2cm]
(\catext{\rwt{\BE}}{\rwt{\CC}},\CXd{\rwt{\BE}}) 
	\arrow{r}{\Shind{(\CC,\BE)}}
	\arrow{d}[swap]{\SEd{(\rwt{\SF},\rwt{\Gamma})}}
& (\rwt{\catext{\BE}{\CC}},\rwt{\CXd{\BE}})  
	\arrow{d}{\rwt{\SEd{(\SF,\Gamma)}}}\\
(\catext{\rwt{\BF}}{\rwt{\CD}},\CXd{\rwt{\BF}}) 
	\arrow{r}{\Shind{(\CD,\BF)}}
& (\rwt{\catext{\BF}{\CD}},\rwt{\CXd{\BF}}) 
\end{tikzcd}
\]
commutes in $\ICExactcat$ for any $n$-exangulated functor \mbox{$(\SF,\Gamma)\colon (\CC,\BE,\fs) \to (\CD,\BF,\ft)$}. That is, we need to show that $\rwt{\SEd{(\SF,\Gamma)}}\Shind{(\CC,\BE)}$ and $\Shind{(\CD,\BF)}\SEd{(\rwt{\SF},\rwt{\Gamma})}$ are equal as functors \mbox{$\catext{\rwt{\BE}}{\rwt{\CC}} \to \rwt{\catext{\BF}{\CD}}$}.

To this end, let $(\ed{A},\alpha,\ed{C})$ be an object in $\catext{\rwt{\BE}}{\rwt{\CC}}$. 
On the one hand, we have that
\[
\rwt{\SEd{(\SF,\Gamma)}} \Shind{(\CC,\BE)}(\ed{A},\alpha,\ed{C}) 
	= \rwt{\SEd{(\SF,\Gamma)}} (\alpha,(\ed{A},\ed{C})) 
	= (\Gamma(\alpha),(\SF \ed{A},\SF \ed{C})), 
\]
while on the other hand
\[
\Shind{(\CD,\BF)}  \SEd{(\rwt{\SF},\rwt{\Gamma})} (\ed{A},\alpha, \ed{C}) 
	= \Shind{(\CD,\BF)} (\SF \ed{A},\Gamma(\alpha), \SF \ed{C}) 
	= (\Gamma(\alpha),(\SF \ed{A},\SF \ed{C})).
\]
Hence, the functors $\rwt{\SEd{(\SF,\Gamma)}}\Shind{(\CC,\BE)}$ and $\Shind{(\CD,\BF)}\SEd{(\rwt{\SF},\rwt{\Gamma})}$ agree on objects. Consider next a morphism \mbox{$((\ed{B},a,\ed{A}),(\ed{D},c,\ed{C}))\colon (\ed{A},\alpha, \ed{C}) \to (\ed{B},\beta, \ed{D})$ in $\catext{\rwt{\BE}}{\rwt{\CC}}$}. 
We have 
\begin{align*}
\rwt{\SEd{(\SF,\Gamma)}} \Shind{(\CC,\BE)}((\ed{B},a,\ed{A}),(\ed{D},c,\ed{C}))
	&= \rwt{\SEd{(\SF,\Gamma)}} ((\ed{B},\ed{D}),(a,c),(\ed{A},\ed{C}))\\
	&= ((\SF \ed{B},\SF \ed{D}),(\SF a,\SF c),(\SF \ed{A},\SF \ed{C})) \\
	&= \Shind{(\CD,\BF)} ((\SF \ed{B},\SF a,\SF \ed{A}),(\SF \ed{D},\SF c,\SF \ed{C}))\\
	&= \Shind{(\CD,\BF)} \SEd{(\rwt{\SF},\rwt{\Gamma})}  ((\ed{B},a,\ed{A}),(\ed{D},c,\ed{C})).&\hspace{47pt}\qedhere
\end{align*}
\end{proof}

The next result says that $\Shin$ satisfies the defining property of a \emph{$2$-natural transformation} between $2$-functors; see \cite[Prop.~4.2.11]{JohnsonYau-2-dimensional-categories}. 

\begin{prop}
\label{prop:IC-2-natural}
Let $\Beth \colon (\SF,\Gamma) \Rightarrow (\SG,\Lambda)$ be an $n$-exangulated natural transformation between $n$-exangulated functors \mbox{$(\SF,\Gamma), (\SG,\Lambda) \colon(\CC,\BE,\fs)\to(\CD,\BF,\ft)$}. Then the square 
\[
\begin{tikzcd}
(\exactcomplete\updave)(\SF,\Gamma)\circ\Shind{(\CC,\BE)}
	\arrow[equal]{r}
	\arrow[Rightarrow]{d}[swap]{(\exactcomplete\updave)(\Beth)\horicomp \id{\Shind{(\CC,\BE)}}}
& \Shind{(\CD,\BF)} \circ (\rwt{\updave}\nexangcomplete)(\SF,\Gamma) 
	\arrow[Rightarrow]{d}[xshift=2pt, yshift=0pt]{\id{\Shind{(\CD,\BF)}}\horicomp(\rwt{\updave}\nexangcomplete)(\Beth)}\\
(\exactcomplete\updave)(\SG,\Lambda)\circ\Shind{(\CC,\BE)}
	\arrow[equal]{r}
& \Shind{(\CD,\BF)} \circ(\rwt{\updave}\nexangcomplete)(\SG,\Lambda) 
\end{tikzcd}
\]
commutes in $\ICExactcat(\catext{\rwt{\BE}}{\rwt{\CC}} , \rwt{\catext{\BF}{\CD}})$.
\end{prop}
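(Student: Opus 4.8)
The plan is to unwind every symbol in the square and then verify commutativity objectwise. First I would record that, by the definitions of the three $2$-functors, $(\exactcomplete\updave)(\SF,\Gamma)=\rwt{\SEd{(\SF,\Gamma)}}$ and $(\rwt{\updave}\nexangcomplete)(\SF,\Gamma)=\SEd{(\rwt{\SF},\rwt{\Gamma})}$, and likewise for $(\SG,\Lambda)$, so that the two horizontal equalities in the square are exactly the content of \cref{thm:commutative-square}. In particular the source functor $\rwt{\SEd{(\SF,\Gamma)}}\circ\Shind{(\CC,\BE)}$ of the left $2$-cell is \emph{equal}, not merely isomorphic, to $\Shind{(\CD,\BF)}\circ\SEd{(\rwt{\SF},\rwt{\Gamma})}$, and similarly for the target functors on the bottom row. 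Since the horizontal arrows are identities, proving that the square commutes in $\ICExactcat(\catext{\rwt{\BE}}{\rwt{\CC}}, \rwt{\catext{\BF}{\CD}})$ reduces to showing that the two vertical $2$-cells $\rwt{\lan\Beth\ran}\horicomp\id{\Shind{(\CC,\BE)}}$ and $\id{\Shind{(\CD,\BF)}}\horicomp\lan\rwt{\Beth}\ran$ coincide as natural transformations; here I use $(\exactcomplete\updave)(\Beth)=\rwt{\lan\Beth\ran}$ and $(\rwt{\updave}\nexangcomplete)(\Beth)=\lan\rwt{\Beth}\ran$. I would then check this equality componentwise at an arbitrary object $(\ed{A},\alpha,\ed{C})$ of $\catext{\rwt{\BE}}{\rwt{\CC}}$.

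For the left-hand $2$-cell, using the formula for horizontal composition in \cref{notn:vertical-composition-and-horizontal-composition-of-2-cells} together with the facts that $(\id{\Shind{(\CC,\BE)}})_{(\ed{A},\alpha,\ed{C})}$ is an identity and $\rwt{\SEd{(\SF,\Gamma)}}$ preserves identities, the component collapses to $\rwt{\lan\Beth\ran}_{\Shind{(\CC,\BE)}(\ed{A},\alpha,\ed{C})}=\rwt{\lan\Beth\ran}_{(\alpha,(\ed{A},\ed{C}))}$. Applying the completion formula for a natural transformation recalled just before \cref{prop:completion-of-n-ex-naturaltr} to $\lan\Beth\ran$ at $(\alpha,(\ed{A},\ed{C}))$, and substituting $\SEd{(\SF,\Gamma)}(\ed{A},\ed{C})=(\SF \ed{A},\SF \ed{C})$, $\SEd{(\SG,\Lambda)}(\ed{A},\ed{C})=(\SG \ed{A},\SG \ed{C})$ and $\lan\Beth\ran_{\alpha}=(\Bethd{A},\Bethd{C})$, I obtain the triple
\[
((\SG \ed{A},\SG \ed{C}),\,((\SG \ed{A})\Bethd{A}\SF \ed{A},(\SG \ed{C})\Bethd{C}\SF \ed{C}),\,(\SF \ed{A},\SF \ed{C})).
\]

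For the right-hand $2$-cell, the same whiskering argument reduces the component to $\Shind{(\CD,\BF)}(\lan\rwt{\Beth}\ran_{(\ed{A},\alpha,\ed{C})})$. Since $\rwt{\Beth}$ is $n$-exangulated by \cref{prop:completion-of-n-ex-naturaltr}, its image under $\lan-\ran$ evaluates at $(\ed{A},\alpha,\ed{C})\inn\rwt{\BE}((C,\ed{C}),(A,\ed{A}))$ to the pair $(\rwt{\Beth}_{(A,\ed{A})},\rwt{\Beth}_{(C,\ed{C})})$, and the completion formula gives $\rwt{\Beth}_{(A,\ed{A})}=(\SG \ed{A},(\SG \ed{A})\Bethd{A}\SF \ed{A},\SF \ed{A})$ and $\rwt{\Beth}_{(C,\ed{C})}=(\SG \ed{C},(\SG \ed{C})\Bethd{C}\SF \ed{C},\SF \ed{C})$. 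Applying $\Shind{(\CD,\BF)}$ on morphisms — which, exactly as in the proof of \cref{thm:cat-of-extensions-of-IC-isomorphic-to-IC-of-cat-of-extensions}, reassembles a pair of triples $((\ed{B}',a',\ed{A}'),(\ed{D}',c',\ed{C}'))$ into the single triple $((\ed{B}',\ed{D}'),(a',c'),(\ed{A}',\ed{C}'))$ — produces precisely the triple displayed above. Hence both vertical $2$-cells have the same component at every $(\ed{A},\alpha,\ed{C})$, so they are equal and the square commutes.

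I do not expect any genuine obstacle here: the argument is a direct unwinding of definitions, and the only point requiring care is keeping the bookkeeping straight between the two differently assembled recipes — on the left one completes $\lan\Beth\ran$ and evaluates, on the right one evaluates $\lan\rwt{\Beth}\ran$ and then applies $\Shind{(\CD,\BF)}$ — together with the pairing of the $A$- and $C$-components and the order of the three slots in each triple. Conceptually, this componentwise identity is simply the $2$-cell shadow of the functor-level coincidence already established in \cref{thm:commutative-square}.
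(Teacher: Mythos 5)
Your proposal is correct and follows essentially the same route as the paper's proof: both reduce the square to the horizontal equalities from \cref{thm:commutative-square}, then verify that the two whiskered $2$-cells agree componentwise at an arbitrary object $(\ed{A},\alpha,\ed{C})$ of $\catext{\rwt{\BE}}{\rwt{\CC}}$, computing the left side by completing $\lan\Beth\ran$ and the right side by applying $\Shind{(\CD,\BF)}$ to $\lan\rwt{\Beth}\ran_{(\ed{A},\alpha,\ed{C})}$, with both sides landing on the triple $((\SG \ed{A},\SG \ed{C}),((\SG \ed{A})\Bethd{A}\SF \ed{A},(\SG \ed{C})\Bethd{C}\SF \ed{C}),(\SF \ed{A},\SF \ed{C}))$. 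The bookkeeping you flag (pairing of $A$- and $C$-components, order of the slots) is exactly where the paper's computation also spends its effort, and your handling of it is accurate.
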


\begin{proof}
Note first that we have the horizontal equalities by \cref{thm:commutative-square}. Consider an arbitrary object \mbox{$(\ed{A},\alpha,\ed{C})\inn\rwt{\BE}((C,\ed{C}),(A,\ed{A}))$} in \mbox{$\catext{\rwt{\BE}}{\rwt{\CC}} = (\rwt{\updave}\nexangcomplete)(\CC,\BE,\fs)$}. 
On the one hand,  
\begin{align*}
((\exactcomplete\updave)(\Beth) \horicomp \id{\Shind{(\CC,\BE)}}\braced{(\ed{A},\alpha,\ed{C})}
	&= (\exactcomplete\updave)(\Beth\braced{\Shind{(\CC,\BE)}(\ed{A},\alpha,\ed{C}) } \circ (\exactcomplete\updave)(\SF,\Gamma)(\id{\Shind{(\CC,\BE)}}\braced{(\ed{A},\alpha,\ed{C})}\\
    &= (\exactcomplete\updave)(\Beth\braced{(\alpha,(\ed{A},\ed{C}))} \circ \rwt{\SEd{(\SF,\Gamma)}}(\id{(\alpha,(\ed{A},\ed{C}))}) \\
    &= (\exactcomplete\updave)(\Beth\braced{(\alpha,(\ed{A},\ed{C}))} \\
	&= (\exactcomplete \updave (\Beth)\braced{(\alpha,(\ed{A},\ed{C}))} \\
	&= \tensor*[]{\rwt{\lan\Beth\ran}}{_{(\alpha,(\ed{A},\ed{C}))}} \\
	&= (\SEd{(\SG,\Lambda)} (\ed{A},\ed{C}),\SEd{(\SG,\Lambda)} (\ed{A},\ed{C})\tensor*[]{\lan\Beth\ran}{_{\alpha}}\SEd{(\SF,\Gamma)}(\ed{A},\ed{C}),\SEd{(\SF,\Gamma)} (\ed{A},\ed{C}))\\
	&= ((\SG \ed{A},\SG \ed{C}), 
		((\SG \ed{A})\Bethd{A}\SF \ed{A}, (\SG \ed{C})\Bethd{C}\SF \ed{C}),
		(\SF \ed{A},\SF \ed{C})). 
\end{align*}
On the other hand, we have
\begin{align*}
(\id{\Shind{(\CD,\BF)}} \hspace{-1.5mm} \horicomp \hspace{-0.75mm}(\rwt{\updave}\nexangcomplete)(\Beth)\braced{(\ed{A},\alpha,\ed{C})}
	&= (\id{\Shind{(\CD,\BF)}}\braced{(\rwt{\updave}\nexangcomplete)(\SG,\Lambda)(\ed{A},\alpha,\ed{C})}
	\circ \Shind{(\CD,\BF)}((\rwt{\updave}\nexangcomplete)(\Beth)\braced{(\ed{A},\alpha,\ed{C})}\\
        &= \Shind{(\CD,\BF)}\tensor*[]{\lan\rwt{\Beth}\ran}{_{(\ed{A},\alpha,\ed{C})}}\\
	&= \Shind{(\CD,\BF)}
			( (\SG \ed{A},\hspace{-0.75mm} (\SG \ed{A})\hspace{-0.75mm}\Bethd{A}\hspace{-0.75mm}\SF \ed{A}, \SF \ed{A}) ,\hspace{-0.75mm} (\SG \ed{C},\hspace{-0.75mm} (\SG \ed{C})\hspace{-0.75mm}\Bethd{C}\hspace{-0.75mm}\SF \ed{C}, \SF \ed{C}) )\\
	&= ((\SG \ed{A},\SG \ed{C}), 
		((\SG \ed{A})\Bethd{A}\SF \ed{A},(\SG \ed{C})\Bethd{C}\SF \ed{C}),
		(\SF \ed{A},\SF \ed{C})),
\end{align*}
which finishes the proof. 
\end{proof}

Restricting to small categories, \cref{thm:commutative-square} and \cref{prop:IC-2-natural} yield the following corollary, demonstrating that idempotent completions and extension categories are compatible constructions in a $2$-category-theoretic sense.

\begin{cor}
\label{cor:commutativity-of-2-square}
There is a $2$-natural transformation 
$
\Shin 
	\colon 
	\rwt{\updave} \nexangcomplete
		\Rightarrow
	\exactcomplete\updave
$
of $2$-functors from $\exang{n}$ to $\ICexactcat$ consisting of exact equivalences.  
\end{cor}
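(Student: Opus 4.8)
The plan is to assemble the two immediately preceding results and then invoke the restriction to small categories. First I would recall what data and axioms a $2$-natural transformation between $2$-functors comprises, following \cite[Prop.~4.2.11]{JohnsonYau-2-dimensional-categories}: for each $0$-cell one needs a $1$-cell component, the naturality square must commute strictly with respect to $1$-cells, and for each $2$-cell a coherence condition expressed via horizontal composition must hold. The strategy is to recognise the collection $\Shin$ as supplying precisely this data, with $\Shind{(\CC,\BE)}$ serving as the $1$-cell component at the $0$-cell $(\CC,\BE,\fs)$.

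Next I would observe that \cref{thm:commutative-square} already verifies the $1$-cell axiom: it shows that for every $n$-exangulated functor the corresponding naturality square commutes in $\ICExactcat$, and that each component $\Shind{(\CC,\BE)}$ is an exact equivalence by \cref{thm:cat-of-extensions-of-IC-isomorphic-to-IC-of-cat-of-extensions}. Then \cref{prop:IC-2-natural} supplies the remaining $2$-dimensional coherence condition, namely the compatibility with $n$-exangulated natural transformations via horizontal whiskering, expressed by the commuting square in its statement. Together these are exactly the conditions characterising a $2$-natural transformation.

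Finally I would restrict the $0$-cells to small categories. On the small locus, the assignments $\updave$, $\rwt{\updave}$, $\nexangcomplete$ and $\exactcomplete$ are genuine $2$-functors by \cref{thm:2-functor-of-exact-categories}, \cref{thm:2-functor-of-n-exangulated-categories} and the discussion defining $\updave$ and $\rwt{\updave}$, so the terminology of $2$-functors and $2$-natural transformations is literally applicable in the sense consistent with the existing literature. Combining the two results then yields the desired $2$-natural transformation $\Shin\colon\rwt{\updave}\nexangcomplete\Rightarrow\exactcomplete\updave$ of $2$-functors $\exang{n}\to\ICexactcat$, whose components are exact equivalences.

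There is no substantive obstacle to overcome here: the genuine content lives in \cref{thm:commutative-square} and \cref{prop:IC-2-natural}, and this corollary is essentially a bookkeeping step that reinterprets those two statements as the two defining conditions of a $2$-natural transformation. The only point requiring care is the set-theoretic one flagged in \cref{rem:set-theory-remark}, namely that one must pass to small categories before the $2$-categorical vocabulary aligns with the standard definitions; but this is entirely routine.
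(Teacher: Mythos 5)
Your proposal is correct and matches the paper's own argument: the corollary is stated immediately after the remark that ``Restricting to small categories, \cref{thm:commutative-square} and \cref{prop:IC-2-natural} yield the following corollary,'' which is exactly your assembly of the $1$-cell naturality (with components being exact equivalences by \cref{thm:cat-of-extensions-of-IC-isomorphic-to-IC-of-cat-of-extensions}) and the $2$-cell coherence condition, followed by restriction to the small locus where the $2$-categorical vocabulary applies. No gaps; this is the paper's proof in slightly expanded form.
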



\section{The weak idempotent completion}
\label{sec:weak-idempotent-completion}

The aim of this section is to relate our main results to the construction of weak idempotent completions. Note that the definitions and results in \cref{sec:weak-idempotent-completion} rely on concepts and notation which should be recalled from previous sections. Many of the proofs in the weakly idempotent complete case are straightforward modifications of those for the idempotent completion. However, we remark that our proof of the key result \cref{prop:idempotents-split-category-of-extensions-WIC} differs significantly from the proof of \cref{prop-A} and relies on a result from \cite{Bennett-TennenhausHauglandSandoyShah-the-category-of-extensions-and-a-characterisation-of-n-exangulated-functors}.

Recall that an additive category $\CC$ is said to be \emph{weakly idempotent complete} if every retraction has a kernel or, equivalently, if every section has a cokernel; see \cite[Lem.~7.1, Def.~7.2]{Buhler-exact-categories}. Every idempotent complete category is weakly idempotent complete; see e.g.\ \mbox{\cite[Prop.~6.5.4]{Borceux-handbook-1}} and \cite[Lem.~A.6.2]{ThomasonTrobaugh-higher-algebraic-K-theory-of-schemes-and-of-derived-categories}. For more detail on weak idempotent completions, see e.g.\ \cite[Sec.~2.2]{KlapprothMsapatoShah-Idempotent-completions-of-n-exangulated-categories}. 

\begin{defn}
\label{def:weak-idempotent-completion}
(See \cite[Def.~2.10]{KlapprothMsapatoShah-Idempotent-completions-of-n-exangulated-categories}.) 
The \emph{weak idempotent completion $\rwh{\CC}$ of $\CC$} is the full subcategory of $\rwt{\CC}$ that consists of all objects $(X,e)$ for which $\id{X}-e$ splits in $\CC$. 
\end{defn}

Note that $\rwh{\CC}$ is an additive subcategory of $\rwt{\CC}$ and that there is a canonical additive inclusion functor \mbox{$\SKd{\CC}\colon \CC \to \rwh{\CC}$} defined by $\SKd{\CC}(X) \deff (X,\id{X})$ for $X\inn\CC$ and \mbox{$\SKd{\CC}(f) \deff (\id{Y},f,\id{X})$} for \mbox{$f\inn\CC(X,Y)$}. This functor is $2$-universal among additive functors from $\CC$ to weakly idempotent complete categories; see \cite[Prop.~2.13]{KlapprothMsapatoShah-Idempotent-completions-of-n-exangulated-categories}. The functor \mbox{$\SId{\CC} \colon \CC \to \rwt{\CC}$} factors through $\SKd{\CC}$ via the canonical inclusion functor \mbox{$\SLd{\CC}\colon \rwh{\CC} \to \rwt{\CC}$}, which is the identity on objects and morphisms. In other words, there is a commutative diagram
\begin{equation}
\label{eqn:WIC3}
\begin{tikzcd}
\CC 
    \arrow{rr}{\SId{\CC}}
    \arrow{dr}[swap]{\SKd{\CC}}
&{}
& \rwt{\CC} \\
& \rwh{\CC}
    \arrow{ur}[swap]{\SLd{\CC}}
&{}
\end{tikzcd}
\end{equation}
of additive categories and functors.

Suppose that $(\CC,\CX)$ is an exact category. One defines an exact structure $\rwh{\CX}$ on $\rwh{\CC}$ as follows. An object of $\rwhCCu{\rightarrow\rightarrow}$ is in $\rwh{\CX}$ if it is a direct summand of an object in the image of $\CX$ under the functor \mbox{$\CCu{\rightarrow\rightarrow}\to \rwhCCu{\rightarrow\rightarrow}$} induced by \mbox{$\SKd{\CC} \colon \CC \to \rwh{\CC}$}. In particular, a kernel-cokernel pair lies in $\rwh{\CX}$ if and only if it is a kernel-cokernel pair in $\rwt{\CX}$ in which all three terms lie in $\rwh{\CC}$. 

\cref{prop:WIC-is-exact-subcategory} below shows that $(\rwh{\CC},\rwh{\CX})$ is a \emph{fully exact subcategory} of $(\rwt{\CC},\rwt{\CX})$. This means that $\rwh{\CC}$ is extension-closed in $(\rwt{\CC},\rwt{\CX})$ and that $\rwh{\CX}$ coincides with the inherited exact structure; see \cite[Lem.~10.20, Def.~10.21]{Buhler-exact-categories}.

\begin{prop}
\label{prop:WIC-is-exact-subcategory}
The pair $(\rwh{\CC},\rwh{\CX})$ is a fully exact subcategory of $(\rwt{\CC},\rwt{\CX})$. The inclusion \mbox{$\SKd{\CC} \colon (\CC,\CX) \to (\rwh{\CC},\rwh{\CX})$} is a fully faithful exact functor that reflects exactness.
\end{prop}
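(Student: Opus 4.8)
The plan is to realise $(\rwh{\CC}, \rwh{\CX})$ as a fully exact subcategory of the already-established exact category $(\rwt{\CC}, \rwt{\CX})$ from \cref{prop:inclusion-functor-into-Karoubi-envelope-is-ff-exact}, and then to deduce every property of $\SKd{\CC}$ from the corresponding property of $\SId{\CC}$ by means of the factorisation $\SId{\CC} = \SLd{\CC} \circ \SKd{\CC}$ recorded in \eqref{eqn:WIC3}. Since $\rwh{\CC}$ is already a full additive subcategory of $\rwt{\CC}$, by \cite[Lem.~10.20]{Buhler-exact-categories} it suffices for the first statement to check that $\rwh{\CC}$ is closed under extensions in $(\rwt{\CC}, \rwt{\CX})$. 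Bühler's result then equips $\rwh{\CC}$ with the inherited exact structure whose conflations are exactly the $\rwt{\CX}$-conflations with all three terms in $\rwh{\CC}$, and a short separate check identifies this with $\rwh{\CX}$: a summand inside $\rwh{\CC}$ of a $\SKd{\CC}$-image is such a conflation by \cite[Cor.~2.18]{Buhler-exact-categories}, and conversely every $\rwt{\CX}$-conflation with terms in $\rwh{\CC}$ is a summand of a $\SKd{\CC}$-image. This simultaneously verifies that $\rwh{\CX}$ is an exact structure and justifies the ``in particular'' characterisation stated before the proposition.

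The main obstacle is the extension-closedness of $\rwh{\CC}$. I would first reduce it to a statement internal to $(\CC,\CX)$. By definition an $\rwt{\CX}$-conflation is a direct summand of $\SId{\CC}(\sigma)$ for some $\CX$-conflation $\sigma \colon U \to V \to W$; since endomorphisms of $\SId{\CC}(U)$ correspond to elements of $\Endd{\CC}(U)$, unwinding the splitting idempotent that cuts out the summand shows that every $\rwt{\CX}$-conflation is isomorphic to the image $(U,\mu_U) \rightarrowtail (V,\mu_V) \twoheadrightarrow (W,\mu_W)$ of an idempotent endomorphism $(\mu_U,\mu_V,\mu_W)$ of $\sigma$. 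Recalling that $(X,e) \inn \rwh{\CC}$ if and only if $\id{X} - e$ splits in $\CC$, and passing to the complementary idempotent, the hypothesis that the outer terms lie in $\rwh{\CC}$ becomes the statement that the outer components split; the goal becomes: \emph{an idempotent endomorphism of a conflation in $(\CC,\CX)$ whose two outer components split must have a splitting middle component.}

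I would prove this internal claim constructively. Writing $\mu_U = s_U r_U$ and $\mu_W = s_W r_W$ for the given splittings, I would build an image conflation $U'' \rightarrowtail V'' \twoheadrightarrow W''$ in $(\CC,\CX)$ by first pushing out the inflation $u$ along the retraction $r_U$ and then pulling back the resulting deflation along the section $s_W$; both operations exist in an exact category and preserve the relevant cokernel and kernel. Using the pushout and pullback universal properties together with the idempotent identities $\mu_U s_U = s_U$, $r_U \mu_U = r_U$, $\mu_V u = u\mu_U$, $v\mu_V = \mu_W v$ and their $W$-analogues, I would produce a morphism of $\rwt{\CX}$-conflations from the image under $\SId{\CC}$ of $U'' \rightarrowtail V'' \twoheadrightarrow W''$ to $(U,\mu_U) \rightarrowtail (V,\mu_V) \twoheadrightarrow (W,\mu_W)$ whose outer components are the canonical isomorphisms $\SId{\CC}(U'') \iso (U,\mu_U)$ and $\SId{\CC}(W'') \iso (W,\mu_W)$ coming from the split idempotents. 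The short five lemma for exact categories \cite[Cor.~3.2]{Buhler-exact-categories} then forces the middle comparison map to be an isomorphism, so $(V,\mu_V) \iso \SId{\CC}(V'')$; equivalently $\mu_V$ splits. The only laborious part is the bookkeeping that the two squares commute and that the comparison map is well defined, but this is dictated entirely by the universal properties. Alternatively, one may simply cite the extension-closedness from the treatment of weak idempotent completions in \cite[Sec.~2.2]{KlapprothMsapatoShah-Idempotent-completions-of-n-exangulated-categories}.

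Finally, I would transfer the properties to $\SKd{\CC}$. Full faithfulness is immediate, as $\SKd{\CC}$ has the same formulas on objects and morphisms as the fully faithful $\SId{\CC}$, merely corestricted to $\rwh{\CC}$. Exactness holds because, by the definition of $\rwh{\CX}$, the image under $\SKd{\CC}$ of any $\CX$-conflation is one of the generating elements of $\rwh{\CX}$. For reflection of exactness, suppose $\SKd{\CC}(\tau)$ lies in $\rwh{\CX}$; applying the exact inclusion $\SLd{\CC}$ and invoking \eqref{eqn:WIC3} shows that $\SId{\CC}(\tau) = \SLd{\CC}\SKd{\CC}(\tau)$ is an $\rwt{\CX}$-conflation, whence $\tau$ is a $\CX$-conflation because $\SId{\CC}$ reflects exactness by \cref{prop:inclusion-functor-into-Karoubi-envelope-is-ff-exact}. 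This completes the argument.
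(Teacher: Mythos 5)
Your overall architecture coincides with the paper's: prove that $\rwh{\CC}$ is closed under extensions in $(\rwt{\CC},\rwt{\CX})$, invoke \cite[Lem.~10.20]{Buhler-exact-categories}, identify the inherited structure with $\rwh{\CX}$, and transfer every property of $\SKd{\CC}$ from $\SId{\CC}$ through \eqref{eqn:WIC3}. Your pushout--pullback construction for the internal claim (an idempotent endomorphism of an $\CX$-conflation whose outer components split has a splitting middle component) is also sound: the comparison morphism exists by the universal properties, and the short five lemma in $(\rwt{\CC},\rwt{\CX})$ finishes it. The genuine gap is in the reduction that feeds this claim. You replace the given conflation, whose end terms lie in $\rwh{\CC}$, by an isomorphic one in canonical summand form $(U,\mu_U)\rightarrowtail(V,\mu_V)\twoheadrightarrow(W,\mu_W)$, and then assert that the hypothesis becomes ``$\id{U}-\mu_U$ and $\id{W}-\mu_W$ split''. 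This transfer presumes that $\rwh{\CC}$ is closed under isomorphisms in $\rwt{\CC}$, and it is not: membership of $(X,e)$ in $\rwh{\CC}$ depends on the presentation, not on the isomorphism class. Concretely, let $\CC$ be the finitely generated free modules over a ring $R$ admitting a stably free non-free module $P$ with $P\oplus R\iso R^{n}$; writing $R^{n}=P'\oplus R'$ internally and letting $f$ be the idempotent projecting onto $R'$, one has $(R^{n},f)\iso (R,\id{R})\inn\rwh{\CC}$ in $\rwt{\CC}$, yet $(R^{n},f)\notin\rwh{\CC}$, since a splitting of $\id{R^{n}}-f$ in $\CC$ would exhibit its image $P'$ as free. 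The same example shows that the strict goal you set yourself is unattainable: $(R,\id{R})\to (R^{n},f)\to 0$ is a conflation in $\rwt{\CX}$ with both end terms in $\rwh{\CC}$ and middle term not in $\rwh{\CC}$, so only extension-closedness up to isomorphism can hold. This is precisely why the paper instead cites \cite[Prop.~5.1]{KlapprothMsapatoShah-Idempotent-completions-of-n-exangulated-categories}, which produces an object of $\rwh{\CC}$ together with an isomorphism of conflations that is the \emph{identity} on the two end terms, so that no hypothesis ever has to be transported along an isomorphism of objects. (Your fallback citation to Section~2.2 of that paper also points to the wrong place; the relevant statement is in its Section~5.)

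Your construction can be repaired, and the repair uses your internal claim in its more natural form. Given a conflation with end terms $(A,\ed{A}),(C,\ed{C})\inn\rwh{\CC}$, take the direct sum with the split conflations $\SId{\CC}(Z_A)\to\SId{\CC}(Z_A)\to 0$ and $0\to\SId{\CC}(Z_C)\to\SId{\CC}(Z_C)$, where $Z_A$ and $Z_C$ are the splitting objects of $\id{A}-\ed{A}$ and $\id{C}-\ed{C}$. The end terms of the enlarged conflation are then isomorphic to objects in the essential image of $\SId{\CC}$, and the property ``$(X,e)$ is isomorphic to some $\SId{\CC}(Z)$'' is equivalent to $e$ itself splitting, hence \emph{is} isomorphism-invariant. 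Passing to canonical summand form and applying your pushout--pullback argument to $\mu_U,\mu_W$ themselves (rather than to their complements) yields that $\mu_V$ splits, so the enlarged middle term is isomorphic to some $\SId{\CC}(V_0)$; unwinding, the original middle term is isomorphic to $(V_0,h)$ with $\id{V_0}-h$ split, i.e.\ to an object of $\rwh{\CC}$. With this correction your argument becomes a self-contained alternative to the paper's citation of \cite[Prop.~5.1]{KlapprothMsapatoShah-Idempotent-completions-of-n-exangulated-categories}; as written, however, the hypothesis-transfer step fails.
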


\begin{proof}
Suppose that 
$
\begin{tikzcd}[column sep=1.7cm]
(A,\ed{A})
    \arrow{r}{(\ed{B}, a, \ed{A})}
& (B,\ed{B})
    \arrow{r}{(\ed{C}, b, \ed{B})}
& (C,\ed{C})
\end{tikzcd}
$
is a conflation in $(\rwt{\CC},\rwt{\CX})$ with \mbox{$(A,\ed{A}),(C,\ed{C})\inn\rwh{\CC}$}. By \cite[Prop.~5.1]{KlapprothMsapatoShah-Idempotent-completions-of-n-exangulated-categories}, there is an object $(D,\ed{D})\inn\rwh{\CC}$ and an isomorphism 
\begin{equation}
\label{eqn:WIC2}
\begin{tikzcd}[column sep=2cm, row sep=1.2cm]
(A,\ed{A})
    \arrow{r}{(\ed{B}, a, \ed{A})}
    \arrow[equals]{d}
& (B,\ed{B})
    \arrow{r}{(\ed{C}, b, \ed{B})}
    \arrow{d}{(\ed{D}, r, \ed{B})}
& (C,\ed{C})
    \arrow[equals]{d} \\
(A,\ed{A})
    \arrow{r}{(\ed{D}, c, \ed{A})}
& (D,\ed{D})
    \arrow{r}{(\ed{C}, d, \ed{D})}
& (C,\ed{C})
\end{tikzcd}
\end{equation}
in the category $\tensor*[]{\mathbf{K}}{^{3}_{(\rwt{\CC};(A,\ed{A}),(C,\ed{C}))}}$ defined in \cite[Def.~2.17]{HerschendLiuNakaoka-n-exangulated-categories-I-definitions-and-fundamental-properties}. By \cite[Lem.~4.1]{HerschendLiuNakaoka-n-exangulated-categories-I-definitions-and-fundamental-properties}, the morphism $(\ed{D}, r, \ed{B})$ is an isomorphism in $\rwt{\CC}$, so $\rwh{\CC}$ is extension-closed. Since a sequence lies in $\rwh{\CX}$ if and only if it lies in $\rwt{\CX}$ and has all terms in $\rwh{\CC}$, the inherited exact structure (see \cite[Lem.~10.20]{Buhler-exact-categories}) coincides with $\rwh{\CX}$. This shows that $(\rwh{\CC},\rwh{\CX})$ is a fully exact subcategory of $(\rwt{\CC},\rwt{\CX})$.

As a consequence, we observe that $\SLd{\CC} \colon (\rwh{\CC},\rwh{\CX}) \to (\rwt{\CC},\rwt{\CX})$ from \eqref{eqn:WIC3} is an exact functor. One sees directly that $\SKd{\CC} \colon (\CC,\CX) \to (\rwh{\CC},\rwh{\CX})$ is fully faithful and exact. That it reflects exactness follows from \mbox{$\SId{\CC}\colon (\CC,\CX) \to (\rwt{\CC},\rwt{\CX})$} reflecting exactness (see \cref{prop:inclusion-functor-into-Karoubi-envelope-is-ff-exact}) and the commutative diagram \eqref{eqn:WIC3}.
\end{proof}

Suppose that $(\CD,\CY)$ is also an exact category. Let $\SF,\SG\colon (\CC,\CX)\to (\CD,\CY)$ be exact functors and consider a natural transformation $\Beth\colon\SF \Rightarrow \SG$. The completions $\rwt{\SF}$ and $\rwt{\SG}$ restrict to give exact functors \mbox{$\rwh{\SF},\rwh{\SG}\colon (\rwh{\CC},\rwh{\CX})\to (\rwh{\CD},\rwh{\CY})$}. Moreover, $\rwt{\Beth}$ restricts to a natural transformation 
\[
\rwh{\Beth} 
    \deff \Setd{ \rwtBethd{(X,e)} }{(X,e)\inn\rwh{\CC}}
    \colon \rwh{\SF}\Rightarrow\rwh{\SG}.
\]
 
We write $\WICExactcat$ and $\WICexactcat$ for the restrictions to weakly idempotent complete $0$-cells in $\Exactcat$ and $\exactcat$, respectively, and note that $\WICexactcat$ is a $2$-category. 

\begin{defn}
\label{def:exactWIC}
Let $\exactWIC = (\exactWICcell{0},\exactWICcell{1},\exactWICcell{2}) \colon \Exactcat \to \WICExactcat$ be defined by the assignments
$\exactWICcell{i}\colon \Exactcatcell{i} \to \WICExactcatcell{i}$, 
where:
\[
\exactWICcell{0}  (\CC,\CX)  
    \deff (\rwh{\CC},\rwh{\CX}),
\hspace{2cm}
\exactWICcell{1}  (\SF)  
    \deff \rwh{\SF},
\hspace{2cm}
\exactWICcell{2} ( \Beth ) 
    \deff \rwh{\Beth}.
\]
\end{defn}

These assignments are well-defined by the discussion above. As a consequence of \cref{thm:2-functor-of-exact-categories}, we see that $\exactWIC$ satisfies the properties of a $2$-functor, because $\rwh{\SF}$ and $\rwh{\Beth}$ are just restrictions of $\rwt{\SF}$ and $\rwt{\Beth}$, respectively. Thus, one deduces an analogue of \cref{thm:2-functor-of-exact-categories}. 

Throughout the rest of this section, suppose that $(\CC,\BE,\fs)$ is an $n$-exangulated category. The next result is an analogue of \cref{prop-A}, but interestingly the proof is very different. 

\begin{prop}
\label{prop:idempotents-split-category-of-extensions-WIC} 
If $\CC$ is weakly idempotent complete, then $\catext{\BE}{\CC}$ is also weakly idempotent complete.
\end{prop}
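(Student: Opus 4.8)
The plan is to verify weak idempotent completeness directly through its standard characterisation: an additive category is weakly idempotent complete exactly when every section admits a cokernel (see \cite[Lem.~7.1]{Buhler-exact-categories}, as recalled at the start of this section). So I would fix an arbitrary section $(r,u)\colon\alpha\to\beta$ in $\catext{\BE}{\CC}$, together with a retraction $(s,v)\colon\beta\to\alpha$ satisfying $(s,v)\circ(r,u)=\id{\alpha}$. Reading off the two components, this says $sr=\id{A}$ and $vu=\id{C}$ in $\CC$, so $r$ and $u$ are sections in $\CC$. The goal is then to produce a cokernel of $(r,u)$.

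I would next pass to the complementary idempotent. Set $\pi\deff(r,u)\circ(s,v)$, which is an idempotent endomorphism of $\beta$, and let $p\deff\id{\beta}-\pi=(\id{B}-rs,\id{D}-uv)$. The elementary fact driving the argument is that a section has a cokernel if and only if its complementary idempotent splits: if $p=j\circ q$ with $q\circ j=\id{\gamma}$ for some $\gamma\inn\catext{\BE}{\CC}$, then using $p\circ(r,u)=0$ and $q=q\circ p$ one checks that the split epimorphism $q\colon\beta\to\gamma$ is a cokernel of $(r,u)$. Hence the whole statement reduces to showing that the idempotent $p$ splits in $\catext{\BE}{\CC}$.

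To split $p$, I would show that its two components split in $\CC$ and then invoke the extension-category splitting criterion. Since $r$ is a section and $\CC$ is weakly idempotent complete, $r$ has a cokernel $c\colon B\to\cok r$; as $(\id{B}-rs)\circ r=0$, the idempotent $\id{B}-rs$ factors through $c$, say $\id{B}-rs=j'c$, whence $cj'c=c$ and, $c$ being epic, $cj'=\id{\cok r}$, so $\id{B}-rs$ splits in $\CC$. The identical argument splits $\id{D}-uv$. With both components of $p$ splitting in $\CC$, \cref{prop:idempotents-split-category-of-extensions} yields that $p$ splits in $\catext{\BE}{\CC}$. Together with the previous paragraph, this furnishes the required cokernel, so every section in $\catext{\BE}{\CC}$ has a cokernel and the category is weakly idempotent complete.

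The step I expect to be the crux is the splitting of the components $\id{B}-rs$ and $\id{D}-uv$ in $\CC$, since this is precisely where the hypothesis is spent: only weak idempotent completeness is available, so one cannot split arbitrary idempotents in $\CC$, but these two are complements of the sections $r$ and $u$ and therefore do split. Everything else is either the standard reduction from the cokernel of a section to the splitting of its complementary idempotent, or a direct appeal to \cref{prop:idempotents-split-category-of-extensions}. The temptation to avoid is trying to write down the cokernel by hand as a componentwise-restricted extension in $\BE(\cok u,\cok r)$; checking that the projection is then a genuine morphism of $\BE$-extensions is exactly the content packaged by the splitting criterion, so it is cleaner to obtain the cokernel abstractly from the split idempotent. (An alternative route, bypassing \cref{prop:idempotents-split-category-of-extensions}, can be built on a result of the earlier work, but the reduction to splitting a single complementary idempotent is the conceptual heart either way.)
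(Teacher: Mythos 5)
Your proof is correct, but it takes a genuinely different route from the paper's. The paper argues directly: given a section $(a,c)\colon\alpha\to\beta$ in $\catext{\BE}{\CC}$, its components are sections in $\CC$ and hence admit cokernels $b=\cok a$ and $d=\cok c$ by weak idempotent completeness, and the paper then cites \cite[Lem.~3.1]{Bennett-TennenhausHauglandSandoyShah-the-category-of-extensions-and-a-characterisation-of-n-exangulated-functors} to conclude that $(b,d)$ is a cokernel of $(a,c)$ in $\catext{\BE}{\CC}$; that external lemma is precisely what packages the construction of the target $\BE$-extension and the check that the componentwise cokernel is a morphism of extensions and a cokernel there. (The paper even flags at the start of \cref{sec:weak-idempotent-completion} that this proof relies on a result from the earlier work, in contrast to the proof of \cref{prop-A}.) You avoid that citation by the reduction ``section with cokernel $\Leftrightarrow$ complementary idempotent splits'': you correctly split the components $\id{B}-rs$ and $\id{D}-uv$ in $\CC$ (complements of sections admitting cokernels always split, which is exactly where weak idempotent completeness is spent), then invoke \cref{prop:idempotents-split-category-of-extensions} to split the idempotent $(\id{B}-rs,\id{D}-uv)$ in $\catext{\BE}{\CC}$, and recover the cokernel of $(r,u)$ from that splitting. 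All the individual steps check out: $p\circ(r,u)=0$ and $q=qp$ do yield that the splitting retraction $q$ is a cokernel, and \cref{prop:idempotents-split-category-of-extensions} applies since $p$ is an idempotent endomorphism of $\beta$ in $\catext{\BE}{\CC}$ with both components split. What your route buys is self-containedness within this paper plus standard additive-category facts, exhibiting the weak statement as a formal consequence of \cref{prop-A}; what the paper's route buys is brevity, at the cost of leaning on the earlier work. Both proofs use the hypothesis in the same place, namely to produce cokernels of the component sections in $\CC$.
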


\begin{proof}
Let $(a,c)\colon \alpha \to \beta$ be a morphism in $\catext{\BE}{\CC}$ for $\alpha\inn\BE(C,A)$ and \mbox{$\beta\inn\BE(D,B)$}. Suppose that this morphism is a section. Thus, there is a retraction \mbox{$(r,s)\colon \beta \to \alpha$} in $\catext{\BE}{\CC}$ satisfying \mbox{$(ra,sc) = (r,s)\circ (a,c) = \id{\alpha} = (\id{A},\id{C})$}. In particular, this implies that $a$ and $c$ are sections in $\CC$. Since $\CC$ is weakly idempotent complete, these morphisms each admit a cokernel, which we denote by $b = \cok a$ and $d = \cok c$. We have that $(b,d)$ is a cokernel of $(a,c)$ in $\catext{\BE}{\CC}$ by \mbox{\cite[Lem.~3.1]{Bennett-TennenhausHauglandSandoyShah-the-category-of-extensions-and-a-characterisation-of-n-exangulated-functors}}, so $\catext{\BE}{\CC}$ is weakly idempotent complete.
\end{proof}

It was shown in \cite[Sec.~5]{KlapprothMsapatoShah-Idempotent-completions-of-n-exangulated-categories} that the weak idempotent completion $\rwh{\CC}$ of $\CC$ admits an \mbox{$n$-exangulated} structure because it is an \emph{extension-closed} subcategory of $(\rwt{\CC},\rwt{\BE},\rwt{\fs}\mspace{1mu})$ in the sense of \cite[Def.~4.1]{HerschendLiuNakaoka-n-exangulated-categories-II}. We denote the corresponding $n$-exangulated category by $(\rwh{\CC},\rwh{\BE},\rwh{\fs}\mspace{3mu})$. We now recall how $\rwh{\BE}$ and $\rwh{\fs}$ are defined.

\begin{defn}
\label{def:hat-BE-fs}
(See \cite[Def.~5.3]{KlapprothMsapatoShah-Idempotent-completions-of-n-exangulated-categories}.)
The biadditive functor $\rwh{\BE}\colon \rwhCCu{\op}\times\rwh{\CC}\to \Ab$ is the restriction of $\rwt{\BE}$ to $\rwhCCu{\op}\times\rwh{\CC}$. The exact realisation $\rwh{\fs}$ of $\rwh{\BE}$ is defined as follows. Suppose that \mbox{$(\ed{A},\alpha,\ed{C})\inn\rwh{\BE}((C,\ed{C}),(A,\ed{A}))$} is an $\rwh{\BE}$-extension. Then \mbox{$\rwt{\fs}(\ed{A},\alpha,\ed{C}) = [ \rwhXd{\bullet}]$} for some \mbox{$(n+2)$-term} complex $\rwhXd{\bullet}$ in $\rwh{\CC}$ by \cite[Prop.~5.1]{KlapprothMsapatoShah-Idempotent-completions-of-n-exangulated-categories}. 
Thus, it is declared that \mbox{$\rwh{\fs}(\ed{A},\alpha,\ed{C}) = [ \rwhXd{\bullet} ]$}.
\end{defn}

\cref{prop:idempotents-split-category-of-extensions-WIC} has the following immediate corollary. 

\begin{cor}
\label{cor:catext-C-hat-is-WIC}
$(\catext{\rwh{\BE}}{\rwh{\CC}}, \CXd{\rwh{\BE}})$ is a weakly idempotent complete exact category. 
\end{cor}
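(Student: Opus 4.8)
The plan is to deduce the statement directly from \cref{prop:idempotents-split-category-of-extensions-WIC} by applying it to the $n$-exangulated category $(\rwh{\CC},\rwh{\BE},\rwh{\fs}\mspace{3mu})$ in place of $(\CC,\BE,\fs)$. This substitution is legitimate because, as recalled earlier in this section following \cite[Sec.~5]{KlapprothMsapatoShah-Idempotent-completions-of-n-exangulated-categories}, the triplet $(\rwh{\CC},\rwh{\BE},\rwh{\fs}\mspace{3mu})$ is itself a genuine $n$-exangulated category, and its underlying additive category $\rwh{\CC}$ is weakly idempotent complete. The latter follows from the fact that $\SKd{\CC}\colon \CC \to \rwh{\CC}$ is $2$-universal among additive functors from $\CC$ to weakly idempotent complete categories, recorded via \cite[Prop.~2.13]{KlapprothMsapatoShah-Idempotent-completions-of-n-exangulated-categories}.

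First I would observe that $(\catext{\rwh{\BE}}{\rwh{\CC}}, \CXd{\rwh{\BE}})$ is an exact category. Indeed, the pair $(\rwh{\CC},\rwh{\BE})$ consists of an additive category together with a biadditive functor $\rwh{\BE}\colon \rwhCCu{\op}\times\rwh{\CC}\to\Ab$, so the general construction of \cite[Prop.~3.2]{Bennett-TennenhausHauglandSandoyShah-the-category-of-extensions-and-a-characterisation-of-n-exangulated-functors}, recalled in Subsection~\ref{subsec:cat-of-extensions}, endows $\catext{\rwh{\BE}}{\rwh{\CC}}$ with the exact structure $\CXd{\rwh{\BE}}$. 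It then only remains to verify weak idempotent completeness of this category. Since $\rwh{\CC}$ is weakly idempotent complete, this is precisely the conclusion of \cref{prop:idempotents-split-category-of-extensions-WIC} applied to $(\rwh{\CC},\rwh{\BE},\rwh{\fs}\mspace{3mu})$, and combining the two observations yields the claim.

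There is essentially no genuine obstacle here, as the substantive work has already been carried out in \cref{prop:idempotents-split-category-of-extensions-WIC}. The only point requiring (minor) care is confirming that the hypotheses of that proposition are met by the weak idempotent completion, namely that $(\rwh{\CC},\rwh{\BE},\rwh{\fs}\mspace{3mu})$ is an $n$-exangulated category whose underlying category is weakly idempotent complete; both facts are supplied by the recollections preceding \cref{def:hat-BE-fs}.
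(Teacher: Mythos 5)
Your proposal is correct and takes essentially the same route as the paper: the paper presents this statement precisely as an immediate consequence of \cref{prop:idempotents-split-category-of-extensions-WIC} applied to the $n$-exangulated category $(\rwh{\CC},\rwh{\BE},\rwh{\fs}\mspace{3mu})$, whose underlying additive category $\rwh{\CC}$ is weakly idempotent complete and whose extension category carries the exact structure $\CXd{\rwh{\BE}}$ from the general construction. One cosmetic remark: the weak idempotent completeness of $\rwh{\CC}$ is part of the construction recorded in \cite[Sec.~2.2]{KlapprothMsapatoShah-Idempotent-completions-of-n-exangulated-categories} rather than a formal consequence of the $2$-universal property alone, but this does not affect your argument.
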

 
Recall from the proof of \cref{thm:cat-of-extensions-of-IC-isomorphic-to-IC-of-cat-of-extensions} that we have an exact equivalence 
\[
\Shind{(\CC,\BE)} \colon 
        (\catext{\rwt{\BE}}{\rwt{\CC}}, \CXd{\rwt{\BE}})
            \to 
	(\rwt{\catext{\BE}{\CC}}, \rwt{\CXd{\BE}})
\] 
with quasi-inverse $\Tsadid{(\CC,\BE)}$ for each $n$-exangulated category $(\CC,\BE,\fs)$ by forgetting the realisation $\fs$. It follows from \cref{prop:idempotents-split-category-of-extensions} that $\Shind{(\CC,\BE)}$ restricts to a functor 
\[
\Shindp{(\CC,\BE)}\colon
    (\catext{\rwh{\BE}}{\rwh{\CC}}, \CXd{\rwh{\BE}})
    \to (\rwh{\catext{\BE}{\CC}}, \rwhCXd{\BE}).
\]
To see this, let $(\ed{A},\alpha,\ed{C})\inn\rwh{\BE}((C,\ed{C}),(A,\ed{A}))$ and consider \mbox{$\Shind{(\CC,\BE)}(\ed{A},\alpha,\ed{C}) 
     = (\alpha,(\ed{A},\ed{C}))$}.
Note that \mbox{$\id{\alpha} - (\ed{A},\ed{C})\colon \alpha\to\alpha$} is a morphism of $\BE$-extensions since \mbox{$(\ed{A},\ed{C})\inn\Endd{\catext{\BE}{\CC}}(\alpha)$}. We must show that 
 \mbox{$\id{\alpha} - (\ed{A},\ed{C})
     = (\id{A}-\ed{A},\id{C}-\ed{C})$} 
splits in $\catext{\BE}{\CC}$. This follows from \cref{prop:idempotents-split-category-of-extensions}, as \mbox{$(A,\ed{A}),(C,\ed{C})\inn\rwh{\CC}$} means that $\id{A}-\ed{A}$ and $\id{C}-\ed{C}$ split in $\CC$.

A similar argument as above shows that $\Tsadid{(\CC,\BE)}$ restricts to a functor 
\[
\Tsadidp{(\CC,\BE)}\colon
    (\rwh{\catext{\BE}{\CC}}, \rwhCXd{\BE})
    \to 
    (\catext{\rwh{\BE}}{\rwh{\CC}}, \CXd{\rwh{\BE}}).
\]
Note that $\Shindp{(\CC,\BE)}$ and $\Tsadidp{(\CC,\BE)}$ are mutually quasi-inverse as they are restrictions of $\Shind{(\CC,\BE)}$ and $\Tsadid{(\CC,\BE)}$. Since it is straightforward to check that $\Shindp{(\CC,\BE)}$ and $\Tsadidp{(\CC,\BE)}$ preserve the exact structures, we have the following.

\begin{thm}
\label{prop:cat-of-extensions-of-WIC-isomorphic-to-WIC-of-cat-of-extensions}
There is an exact equivalence 
\mbox{$\Shindp{(\CC,\BE)}\colon (\catext{\rwh{\BE}}{\rwh{\CC}}, \CXd{\rwh{\BE}})
\to (\rwh{\catext{\BE}{\CC}}, \rwhCXd{\BE})$} 
given by the restriction of $\Shind{(\CC,\BE)}$. 
\end{thm}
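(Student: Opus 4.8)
The plan is to obtain $\Shindp{(\CC,\BE)}$ as a restriction of the exact equivalence $\Shind{(\CC,\BE)}\colon (\catext{\rwt{\BE}}{\rwt{\CC}},\CXd{\rwt{\BE}}) \to (\rwt{\catext{\BE}{\CC}},\rwt{\CXd{\BE}})$ from \cref{thm:cat-of-extensions-of-IC-isomorphic-to-IC-of-cat-of-extensions}, and to transport its good properties along the inclusions into these ambient idempotent complete categories. The discussion preceding the statement already establishes that $\Shind{(\CC,\BE)}$ and its quasi-inverse $\Tsadid{(\CC,\BE)}$ restrict to well-defined additive functors $\Shindp{(\CC,\BE)}$ and $\Tsadidp{(\CC,\BE)}$ between $\catext{\rwh{\BE}}{\rwh{\CC}}$ and $\rwh{\catext{\BE}{\CC}}$, and that these are mutually quasi-inverse, with the relevant natural isomorphism obtained by restricting $\Mem$ from the proof of \cref{thm:cat-of-extensions-of-IC-isomorphic-to-IC-of-cat-of-extensions}. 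Hence the only point requiring proof is that $\Shindp{(\CC,\BE)}$ and $\Tsadidp{(\CC,\BE)}$ are exact.

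The key step I would isolate is an explicit description of the two exact structures as inherited from the ambient categories. On the target side this is \cref{prop:WIC-is-exact-subcategory} applied to the exact category $(\catext{\BE}{\CC},\CXd{\BE})$: it exhibits $(\rwh{\catext{\BE}{\CC}},\rwhCXd{\BE})$ as a fully exact subcategory of $(\rwt{\catext{\BE}{\CC}},\rwt{\CXd{\BE}})$, so a sequence in $\rwh{\catext{\BE}{\CC}}$ is a $\rwhCXd{\BE}$-conflation precisely when it is a $\rwt{\CXd{\BE}}$-conflation. On the source side I would prove the analogous statement by hand: a sequence of morphisms in $\catext{\rwh{\BE}}{\rwh{\CC}}$ lies in $\CXd{\rwh{\BE}}$ exactly when, regarded in $\catext{\rwt{\BE}}{\rwt{\CC}}$, it lies in $\CXd{\rwt{\BE}}$. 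Here one uses that a conflation in either extension category is, by definition, a sequence whose two underlying sequences each consist of a section followed by its cokernel, and that such a sequence is automatically split exact (if $ra=\id{A}$ for a retraction $r$ and $b=\cok a$, then $\id{B}-ar$ factors through $b$ and, since $ba=0$ and $b$ is epic, yields a section of $b$). Since $\SLd{\CC}\colon \rwh{\CC}\to\rwt{\CC}$ is the inclusion of a full additive subcategory, a sequence with terms in $\rwh{\CC}$ is split exact in $\rwh{\CC}$ if and only if it is split exact in $\rwt{\CC}$, which gives the desired identification.

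Granting these two descriptions, exactness of both functors is a short diagram chase. Given a $\CXd{\rwh{\BE}}$-conflation $S$, I would view it as a $\CXd{\rwt{\BE}}$-conflation, apply the exact functor $\Shind{(\CC,\BE)}$ to obtain a $\rwt{\CXd{\BE}}$-conflation all of whose terms lie in $\rwh{\catext{\BE}{\CC}}$, and then invoke \cref{prop:WIC-is-exact-subcategory} to conclude that $\Shindp{(\CC,\BE)}(S)$ is a $\rwhCXd{\BE}$-conflation. Exactness of $\Tsadidp{(\CC,\BE)}$ follows symmetrically, using that $\Tsadid{(\CC,\BE)}$ is exact and that its images have all terms in $\catext{\rwh{\BE}}{\rwh{\CC}}$. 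Combined with the quasi-inverse relationship, this shows that $\Shindp{(\CC,\BE)}$ is an exact equivalence.

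I expect the main obstacle to be organisational rather than conceptual: one must check carefully that membership in $\CXd{\rwh{\BE}}$ coincides with membership in $\CXd{\rwt{\BE}}$ for sequences supported on $\rwh{\CC}$, and the cleanest route to this is the elementary observation that a section admitting a cokernel splits. Once that is recorded, everything reduces to chasing a sequence through the two full inclusions and applying the already-proven exactness of $\Shind{(\CC,\BE)}$ and $\Tsadid{(\CC,\BE)}$.
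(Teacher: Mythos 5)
Your proposal is correct and takes essentially the same route as the paper: there, too, the theorem is obtained by restricting $\Shind{(\CC,\BE)}$ and $\Tsadid{(\CC,\BE)}$ (via the discussion preceding the statement, which you rightly take as given), observing that the restrictions remain mutually quasi-inverse, and then checking preservation of the exact structures, a step the paper labels straightforward. Your way of carrying out that last check---identifying $\CXd{\rwh{\BE}}$-conflations with $\CXd{\rwt{\BE}}$-conflations supported on $\rwh{\CC}$ via the split-exactness of section--cokernel sequences, and invoking \cref{prop:WIC-is-exact-subcategory} for the target side---is a valid instantiation of precisely the verification the paper leaves to the reader.
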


Suppose that $(\SF,\Gamma)\colon (\CC,\BE,\fs)\to (\CD,\BF,\ft)$ is an $n$-exangulated functor. One can define a natural transformation \mbox{$\rwh{\Gamma}\colon \rwh{\BE}(-,-) \Rightarrow \rwh{\BF}(\rwh{\SF}-,\rwh{\SF}-)$} by setting 
\[
\rwhGammad{((C,\ed{C}),(A,\ed{A}))}(\ed{A}, \alpha, \ed{C}) 
    \deff  (\SF \ed{A}, \Gamma(\alpha), \SF \ed{C}).
\]
Notice that $\rwh{\Gamma}$ is just a restriction of $\rwt{\Gamma}$. We claim that the pair $(\rwh{\SF},\rwh{\Gamma})$ is an $n$-exangulated functor \mbox{$(\rwh{\CC},\rwh{\BE},\rwh{\fs}\mspace{3mu})\to(\rwh{\CD},\rwh{\BF},\rwh{\ft}\mspace{3mu})$}. To verify this, assume that \mbox{$\rwh{\fs}(\ed{A},\alpha,\ed{C}) = [\rwhXd{\bullet}]$}, which implies \mbox{$\rwt{\fs}(\ed{A},\alpha,\ed{C}) = [\rwhXd{\bullet}]$}. This yields 
\mbox{$\rwt{\ft}(\rwt{\Gamma}(\ed{A},\alpha,\ed{C})) 
    = [\rwtSFd{\com}(\rwhXd{\bullet})]$}, as $(\rwt{\SF},\rwt{\Gamma})$ is $n$-exangulated by \cref{prop:completion-of-n-ex-functor}. Since $\rwtSFd{\com}(\rwhXd{\bullet})$ is a complex in $\rwh{\CD}$, we obtain 
\[
\rwh{\ft}(\rwh{\Gamma}(\ed{A},\alpha,\ed{C})) 
    = [\rwtSFd{\com}(\rwhXd{\bullet})] 
    = [\rwhSFd{\com}(\rwhXd{\bullet})]
\] 
as required.

Let $\Beth \colon(\SF,\Gamma) \Rightarrow (\SG,\Lambda)$ be an $n$-exangulated natural transformation between $n$-exangulated functors \mbox{$(\SF,\Gamma),(\SG,\Lambda) \colon (\CC,\BE,\fs)\to (\CD,\BF,\ft)$}. Using that the completion \mbox{$\rwt{\Beth}\colon(\rwt{\SF},\rwt{\Gamma}) \Rightarrow (\rwt{\SG},\rwt{\Lambda})$} is an $n$-exangulated natural transformation by \cref{prop:completion-of-n-ex-naturaltr}, the same holds for the restriction \mbox{$\rwh{\Beth} \colon (\rwh{\SF},\rwh{\Gamma})\Rightarrow (\rwh{\SG},\rwh{\Lambda})$}. 

We write $\WICExang{n}$ for the collections obtained by only considering weakly idempotent complete $0$-cells in $\Exang{n}$. Based on the discussion above, we may thus define  
\[
\nexangWIC = (\nexangWICcell{0},\nexangWICcell{1},\nexangWICcell{2}) \colon \Exang{n} \to \WICExang{n}
\]
using assignments
\mbox{$\nexangWICcell{i}\colon \Exangcell{n}{i} \to \WICExangcell{n}{i}$}, 
where:
\[
\nexangWICcell{0}  (\CC,\BE,\fs)  
    \deff (\rwh{\CC},\rwh{\BE},\rwh{\fs}\mspace{3mu}),
\hspace{2cm}
\nexangWICcell{1}  (\SF,\Gamma)  
    \deff (\rwh{\SF},\rwh{\Gamma}),
\hspace{2cm}
\nexangWICcell{2} ( \Beth ) 
    \deff \rwh{\Beth}.
\]
It is straightforward to check that the analogue of \cref{thm:2-functor-of-n-exangulated-categories} holds for $\nexangWIC$.

As an application of \cref{prop:idempotents-split-category-of-extensions-WIC}, we can restrict $\updave$ to weakly idempotent complete categories. This restriction is denoted by
\[
\rwh{\updave} =  (\rwhdavecell{0},\rwhdavecell{1},\rwhdavecell{2}) \colon \WICExang{n} \to \WICExactcat,
\]
where $\rwhdavecell{i}$ is the restriction of $\davecell{i}$ for $i \inn \{0,1,2\}$ and satisfies the same properties. The proof of \cref{thm:commutative-square} yields the next theorem. Similarly, analogues of \cref{prop:IC-2-natural} and \cref{cor:commutativity-of-2-square} follow.

\begin{thm}
\label{thm:commutative-square-WIC} 
The collection $\Shin'$ of exact equivalences $\Shindp{(\CC,\BE)}$ for $(\CC,\BE,\fs)\inn\Exangcell{n}{0}$ defines a natural transformation $\rwh{\updave} \nexangWIC \Rightarrow \exactWIC \updave$. 
\end{thm}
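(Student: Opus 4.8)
The plan is to reprise the argument for \cref{thm:commutative-square} verbatim, now carried out inside the weakly idempotent complete setting. Concretely, fixing an $n$-exangulated functor $(\SF,\Gamma)\colon (\CC,\BE,\fs) \to (\CD,\BF,\ft)$, I would establish naturality of $\Shin'$ by showing that the square
\[
\begin{tikzcd}[column sep=2cm]
(\catext{\rwh{\BE}}{\rwh{\CC}},\CXd{\rwh{\BE}})
	\arrow{r}{\Shindp{(\CC,\BE)}}
	\arrow{d}[swap]{\SEd{(\rwh{\SF},\rwh{\Gamma})}}
& (\rwh{\catext{\BE}{\CC}},\rwhCXd{\BE})
	\arrow{d}{\rwh{\SEd{(\SF,\Gamma)}}}\\
(\catext{\rwh{\BF}}{\rwh{\CD}},\CXd{\rwh{\BF}})
	\arrow{r}{\Shindp{(\CD,\BF)}}
& (\rwh{\catext{\BF}{\CD}},\rwhCXd{\BF})
\end{tikzcd}
\]
commutes in $\WICExactcat$.

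The crucial simplification is that every functor in this square is a restriction of the corresponding functor in the idempotent complete square from \cref{thm:commutative-square}: the horizontal functors $\Shindp{(\CC,\BE)}$ and $\Shindp{(\CD,\BF)}$ are restrictions of $\Shind{(\CC,\BE)}$ and $\Shind{(\CD,\BF)}$ by \cref{prop:cat-of-extensions-of-WIC-isomorphic-to-WIC-of-cat-of-extensions}; the left vertical functor $\SEd{(\rwh{\SF},\rwh{\Gamma})}$ is built from $\rwh{\SF}$ and $\rwh{\Gamma}$, which are themselves restrictions of $\rwt{\SF}$ and $\rwt{\Gamma}$; and the right vertical functor $\rwh{\SEd{(\SF,\Gamma)}}$ is the restriction of $\rwt{\SEd{(\SF,\Gamma)}}$ to the weakly idempotent complete subcategory $\rwh{\catext{\BE}{\CC}}\subseteq\rwt{\catext{\BE}{\CC}}$. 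Consequently, the object- and morphism-level computations performed in the proof of \cref{thm:commutative-square} remain valid word for word: for an object $(\ed{A},\alpha,\ed{C})$ of $\catext{\rwh{\BE}}{\rwh{\CC}}$ both composites send it to $(\Gamma(\alpha),(\SF \ed{A},\SF \ed{C}))$, and the matching on morphisms is the same as in the idempotent complete case.

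The only point that genuinely requires attention -- and it is routine bookkeeping rather than a substantive obstacle -- is confirming that each of the four functors lands inside the relevant weakly idempotent complete category, so that the diagram lives in $\WICExactcat$ and not merely in $\ICExactcat$. For the horizontal functors this is exactly the content of \cref{prop:cat-of-extensions-of-WIC-isomorphic-to-WIC-of-cat-of-extensions}, while for the vertical functors it follows from the preceding discussion establishing that $(\rwh{\SF},\rwh{\Gamma})$ is an $n$-exangulated functor between the weak idempotent completions and that $\exactWIC$ sends exact functors to exact functors between weakly idempotent complete exact categories. Once these well-definedness statements are in place, commutativity of the square is inherited directly from \cref{thm:commutative-square}, and no further argument is needed. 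The analogues of \cref{prop:IC-2-natural} and \cref{cor:commutativity-of-2-square} then follow by the same restriction principle, using that each horizontal composition and each $n$-exangulated natural transformation occurring there restricts from its idempotent complete counterpart.
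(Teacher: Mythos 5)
Your proposal is correct and takes essentially the same route as the paper: the paper's proof of \cref{thm:commutative-square-WIC} is precisely the observation that the argument for \cref{thm:commutative-square} carries over, since every functor in the naturality square is a restriction of its idempotent-complete counterpart, with well-definedness supplied by \cref{prop:cat-of-extensions-of-WIC-isomorphic-to-WIC-of-cat-of-extensions} and the discussion preceding the theorem. Your closing remark that the analogues of \cref{prop:IC-2-natural} and \cref{cor:commutativity-of-2-square} follow by the same restriction principle also matches the paper exactly.
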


By \cite[Prop.~4.36]{KlapprothMsapatoShah-Idempotent-completions-of-n-exangulated-categories}, there is an $n$-exangulated functor $(\SId{\CC},\Gammad{\CC})\colon (\CC,\BE,\fs) \to (\rwt{\CC},\rwt{\BE},\rwt{\fs}\mspace{1mu})$, where \mbox{$\Gammad{\CC}(\alpha) = (\id{A},\alpha,\id{C})$} for $\alpha\inn\BE(C,A)$. Similarly, it is shown in \cite[Thm.~5.5]{KlapprothMsapatoShah-Idempotent-completions-of-n-exangulated-categories} that \mbox{$(\SKd{\CC},\Deltad{\CC})\colon (\CC,\BE,\fs) \to (\rwh{\CC},\rwh{\BE},\rwh{\fs}\mspace{3mu})$} is $n$-exangulated, where $\Deltad{\CC}(\alpha) = (\id{A},\alpha,\id{C})$. Diagram \eqref{eqn:WIC3} can be augmented to a commutative diagram 
\begin{equation}
\label{eqn:WIC4}
\begin{tikzcd}
(\CC,\BE,\fs)
    \arrow{rr}{(\SId{\CC},\Gammad{\CC})}
    \arrow{dr}[swap]{(\SKd{\CC},\Deltad{\CC})}
&{}
& (\rwt{\CC},\rwt{\BE},\rwt{\fs}\mspace{1mu}) \\
& (\rwh{\CC},\rwh{\BE},\rwh{\fs}\mspace{3mu})
    \arrow{ur}[swap]{(\SLd{\CC},\Thetad{\CC})}
&{}
\end{tikzcd}
\end{equation}
in $\Exang{n}$, where $\Thetad{\CC}(\ed{B},\beta,\ed{D}) \deff (\ed{B},\beta,\ed{D})$ for $(\ed{B},\beta,\ed{D})\inn\rwh{\BE}((D,\ed{D}),(B,\ed{B}))$. Using the functor $\updave\colon\Exang{n} \to \Exactcat$, the diagram \eqref{eqn:WIC4} induces the commutative diagram 
\begin{equation*}
\label{eqn:WIC5}
\begin{tikzcd}
(\catext{\BE}{\CC},\CXd{\BE})
    \arrow{rr}{\SEd{(\SId{\CC},\Gammad{\CC})}}
    \arrow{dr}[swap]{\SEd{(\SKd{\CC},\Deltad{\CC})}}
&{}
& (\catext{\rwt{\BE}}{\rwt{\CC}},\CXd{\rwt{\BE}}) \\
& (\catext{\rwh{\BE}}{\rwh{\CC}},\CXd{\rwh{\BE}})
    \arrow{ur}[swap]{\SEd{(\SLd{\CC},\Thetad{\CC})}}
&{}
\end{tikzcd}
\end{equation*}
in $\Exactcat$.

Building on the work in \cref{sec:weak-idempotent-completion}, it is straightforward to check that we obtain \cref{thm:final_diagram} below. We note that there is a similar commutative diagram involving the exact equivalence $\Tsadid{(\CC,\BE)}$ and its restriction $\Tsadidp{(\CC,\BE)}$.

\begin{thm}\label{thm:final_diagram}
The diagram
\[\begin{tikzcd}[column sep=2cm]
    (\catext{\BE}{\CC}, \CXd{\BE})
        \arrow{r}{\SEd{(\SKd{\CC},\Deltad{\CC})}}
        \arrow[equals]{d}
    & (\catext{\rwh{\BE}}{\rwh{\CC}}, \CXd{\BE})
        \arrow{r}{\SEd{(\SLd{\CC},\Thetad{\CC})}}
        \arrow{d}{\Shindp{(\CC,\BE)}}[swap]{\simeq}
    & (\catext{\rwt{\BE}}{\rwt{\CC}}, \CXd{\BE})
        \arrow{d}{\Shind{(\CC,\BE)}}[swap]{\simeq} 
\\
    (\catext{\BE}{\CC}, \CXd{\BE})
        \arrow{r}{\SKd{\catext{\BE}{\CC}}}
    & (\rwh{\catext{\BE}{\CC}}, \CXd{\BE})
        \arrow{r}{\SLd{\catext{\BE}{\CC}}}
    & (\rwt{\catext{\BE}{\CC}}, \CXd{\BE})
\end{tikzcd}
\]
in $\Exactcat$ is commutative.
\end{thm}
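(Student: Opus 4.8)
The plan is to verify that each of the two squares in the diagram commutes by directly comparing the underlying functors on objects and on morphisms; exactness of everything in sight has already been established (the functors $\SEd{-}$ are exact by \cite{Bennett-TennenhausHauglandSandoyShah-the-category-of-extensions-and-a-characterisation-of-n-exangulated-functors}, the inclusions $\SKd{-}$ and $\SLd{-}$ are exact by construction, and $\Shind{(\CC,\BE)}$ and $\Shindp{(\CC,\BE)}$ are exact equivalences by \cref{thm:cat-of-extensions-of-IC-isomorphic-to-IC-of-cat-of-extensions} and \cref{prop:cat-of-extensions-of-WIC-isomorphic-to-WIC-of-cat-of-extensions}). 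Hence only the two functor equalities remain, and the proof reduces to unwinding the explicit formulas recalled earlier.

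For the left-hand square I would show that $\Shindp{(\CC,\BE)} \circ \SEd{(\SKd{\CC},\Deltad{\CC})}$ equals $\SKd{\catext{\BE}{\CC}}$. On an object $\alpha\inn\BE(C,A)$, the upper-right route gives $\SEd{(\SKd{\CC},\Deltad{\CC})}(\alpha) = \Deltad{\CC}(\alpha) = (\id{A},\alpha,\id{C})$ and then $\Shindp{(\CC,\BE)}(\id{A},\alpha,\id{C}) = (\alpha,(\id{A},\id{C}))$, while the lower route gives $\SKd{\catext{\BE}{\CC}}(\alpha) = (\alpha,(\id{A},\id{C}))$, so the two agree. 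Running the identical computation on a morphism $(a,c)$, using $\SEd{(\SKd{\CC},\Deltad{\CC})}(a,c) = ((\id{B},a,\id{A}),(\id{D},c,\id{C}))$ together with the formula for $\Shindp{(\CC,\BE)}$ on morphisms, produces $((\id{B},\id{D}),(a,c),(\id{A},\id{C}))$ on both sides.

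For the right-hand square I would check $\Shind{(\CC,\BE)} \circ \SEd{(\SLd{\CC},\Thetad{\CC})} = \SLd{\catext{\BE}{\CC}} \circ \Shindp{(\CC,\BE)}$. The key observation is that $\SLd{\CC}$ and $\Thetad{\CC}$ are identity-on-underlying-data inclusions $\rwh{\CC} \into \rwt{\CC}$ and $\rwh{\BE}\into\rwt{\BE}$, so $\SEd{(\SLd{\CC},\Thetad{\CC})}$ merely reinterprets an $\rwh{\BE}$-extension $(\ed{A},\alpha,\ed{C})$ as the same $\rwt{\BE}$-extension, and likewise $\SLd{\catext{\BE}{\CC}}$ is the identity on objects and morphisms. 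Since $\Shindp{(\CC,\BE)}$ is by definition the restriction of $\Shind{(\CC,\BE)}$, both composites send $(\ed{A},\alpha,\ed{C})$ to $(\alpha,(\ed{A},\ed{C}))$ and a morphism $((\ed{B},a,\ed{A}),(\ed{D},c,\ed{C}))$ to $((\ed{B},\ed{D}),(a,c),(\ed{A},\ed{C}))$, which gives the claimed equality.

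I do not expect a genuine obstacle here: the statement is essentially bookkeeping, and the only point requiring care is keeping track of the ambient categories (the weak and full idempotent completions of $\catext{\BE}{\CC}$ versus the extension categories built from $\rwh{\CC}$ and $\rwt{\CC}$) and confirming that the identifications $\Deltad{\CC}(\alpha) = (\id{A},\alpha,\id{C})$ and the identity-like inclusions $\Thetad{\CC}$, $\SLd{\CC}$, $\SLd{\catext{\BE}{\CC}}$ are applied on the correct side. The analogous commutative diagram involving $\Tsadid{(\CC,\BE)}$ and its restriction $\Tsadidp{(\CC,\BE)}$ then follows by reading the same computation backwards, since these functors are the quasi-inverses of $\Shind{(\CC,\BE)}$ and $\Shindp{(\CC,\BE)}$.
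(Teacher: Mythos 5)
Your proposal is correct and matches the paper's intended argument: the paper itself offers no written-out proof, stating only that the commutativity is "straightforward to check" from the preceding constructions, and your unwinding of the formulas for $\SEd{(\SKd{\CC},\Deltad{\CC})}$, $\SEd{(\SLd{\CC},\Thetad{\CC})}$, $\Shind{(\CC,\BE)}$, $\Shindp{(\CC,\BE)}$ and the canonical inclusions is exactly that check. Your closing remark about the companion diagram for $\Tsadid{(\CC,\BE)}$ and $\Tsadidp{(\CC,\BE)}$ also agrees with the remark the paper makes just before the theorem.
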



{\begin{acknowledgements}
\setstretch{1}
The authors are grateful to Dixy Msapato for directing them to a result in their paper \cite{Msapato-the-karoubi-envelope-and-weak-idempotent-completion-of-an-extriangulated-category}, which led to \cref{prop:idempotents-split-category-of-extensions}. 

Parts of this work were carried out while the second author visited Aarhus University, and while the first and fourth author visited NTNU. The authors thank the project ``Pure Mathematics in Norway'' funded by the Trond Mohn Foundation for supporting these stays.

The first author is grateful to have been supported during part of this work by the Alexander von Humboldt Foundation in the framework of an Alexander von Humboldt Professorship endowed by the German Federal Ministry of Education. 
The third author is grateful to have been supported by Norwegian Research Council project 301375, ``Applications of reduction techniques and computations in representation theory''.
The fourth author is grateful to have been supported during part of this work by the Engineering and Physical Sciences Research Council (grant EP/P016014/1), and the London Mathematical Society with support from Heilbronn Institute for Mathematical Research (grant ECF-1920-57). 
In addition, the first and fourth authors gratefully acknowledge support from: 
the Danish National Research Foundation (grant DNRF156); 
the Independent Research Fund Denmark (grant 1026-00050B); and 
the Aarhus University Research Foundation (grant AUFF-F-2020-7-16). 
\end{acknowledgements}}


{\setstretch{1}\bibliographystyle{mybst}
\bibliography{references}}

\begin{thebibliography}{10}

\bibitem{Atiyah-KS-theorem-with-apps-to-sheaves}
M.~Atiyah.
\newblock {\em On the {K}rull-{S}chmidt theorem with application to sheaves}.
\newblock Bull. Soc. Math. France, {\bf 84}:307--317, 1956.

\bibitem{Auslander-Rep-theory-of-Artin-algebras-I}
M.~Auslander.
\newblock {\em Representation theory of {A}rtin algebras. {I}}.
\newblock Comm. Algebra, {\bf 1}:177--268, 1974.

\bibitem{Azumaya-on-generalised-semiprimary-rings}
G.~Azumaya.
\newblock {\em On generalized semi-primary rings and
  {K}rull-{R}emak-{S}chmidt's theorem}.
\newblock Jpn. J. Math., {\bf 19}:525--547, 1948.

\bibitem{Bennett-TennenhausHauglandSandoyShah-the-category-of-extensions-and-a-characterisation-of-n-exangulated-functors}
R.~Bennett-Tennenhaus, J.~Haugland, M.~H. Sand{\o}y, and A.~Shah.
\newblock {\em The category of extensions and a characterisation of
  {$n$}-exangulated functors}.
\newblock Math. Z., {\bf 305}(3):44, 2023.

\bibitem{Bennett-TennenhausShah-transport-of-structure-in-higher-homological-algebra}
R.~{Bennett-Tennenhaus} and A.~Shah.
\newblock {\em Transport of structure in higher homological algebra}.
\newblock J. Algebra, {\bf 574}:514--549, 2021.

\bibitem{Borceux-handbook-1}
F.~Borceux.
\newblock {\em Handbook of categorical algebra. 1. Basic category theory},
  volume~50 of {\em Encyclopedia of Mathematics and its Applications}.
\newblock Cambridge Univ. Press, Cambridge, 1994.

\bibitem{Borceux-Dejean-Cauchy-completion-in-category-theory}
F.~Borceux and D.~Dejean.
\newblock {\em Cauchy completion in category theory}.
\newblock Cah. Topol. G{\'{e}}om. Diff{\'{e}}r. Cat{\'{e}}g., {\bf
  27}(2):133--146, 1986.

\bibitem{Botnan-Crawley-Boevey-decomposition-of-persistence-modules}
M.~B. Botnan and W.~Crawley-Boevey.
\newblock {\em Decomposition of persistence modules}.
\newblock Proc. Amer. Math. Soc., {\bf 148}(11):4581--4596, 2020.

\bibitem{Buhler-exact-categories}
T.~B{\"{u}}hler.
\newblock {\em Exact categories}.
\newblock Expo. Math., {\bf 28}(1):1--69, 2010.

\bibitem{ChenYeZhang-Algebras-of-derived-dimension-zero}
X.-W. Chen, Y.~Ye, and P.~Zhang.
\newblock {\em Algebras of derived dimension zero}.
\newblock Comm. Algebra, {\bf 36}(1):1--10, 2008.

\bibitem{DraxlerReitenSmaloSolberg-exact-categories-and-vector-space-categories}
P.~Dr{\"{a}}xler, I.~Reiten, S.~O. Smal{\o}, and {\O}.~Solberg.
\newblock {\em Exact categories and vector space categories}.
\newblock Trans. Amer. Math. Soc., {\bf 351}(2):647--682, 1999.
\newblock With an appendix by B. Keller.

\bibitem{GabrielNazarovaRoiterSergeichukVossieck-tame-and-wild-subspace-problems}
P.~Gabriel, L.~A. Nazarova, A.~V. Ro\u{\i}ter, V.~V. Serge\u{\i}chuk, and
  D.~Vossieck.
\newblock {\em Tame and wild subspace problems}.
\newblock Ukra\"{\i}n. Mat. Zh., {\bf 45}(3):313--352, 1993.

\bibitem{GabrielRoiter-reps-of-finite-dimensional-algebras}
P.~Gabriel and A.~V. Roiter.
\newblock {\em Representations of finite-dimensional algebras}.
\newblock Springer-Verlag, Berlin, 1997.
\newblock Translated from the Russian, With a chapter by B. Keller, Reprint of
  the 1992 English translation.

\bibitem{GeissKellerOppermann-n-angulated-categories}
C.~Geiss, B.~Keller, and S.~Oppermann.
\newblock {\em {\(n\)}-angulated categories}.
\newblock J. Reine Angew. Math., {\bf 675}:101--120, 2013.

\bibitem{Haugland-the-grothendieck-group-of-an-n-exangulated-category}
J.~Haugland.
\newblock {\em The {G}rothendieck group of an {\(n\)}-exangulated category}.
\newblock Appl. Categ. Structures, {\bf 29}(3):431--446, 2021.

\bibitem{Haugland-auslander-reiten-triangles-and-grothendieck-groups-of-triangulated-categories}
J.~Haugland.
\newblock {\em Auslander-{R}eiten triangles and {G}rothendieck groups of
  triangulated categories}.
\newblock Algebr. Represent. Theory, {\bf 25}(6):1379--1387, 2022.

\bibitem{HerschendLiuNakaoka-n-exangulated-categories-I-definitions-and-fundamental-properties}
M.~Herschend, Y.~Liu, and H.~Nakaoka.
\newblock {\em n-exangulated categories ({I}): Definitions and fundamental
  properties}.
\newblock J. Algebra, {\bf 570}:531--586, 2021.

\bibitem{HerschendLiuNakaoka-n-exangulated-categories-II}
M.~Herschend, Y.~Liu, and H.~Nakaoka.
\newblock {\em {\(n\)}-exangulated categories ({II}): {C}onstructions from
  {\(n\)}-cluster tilting subcategories}.
\newblock J. Algebra, {\bf 594}:636--684, 2022.

\bibitem{Jasso-n-abelian-and-n-exact-categories}
G.~Jasso.
\newblock {\em {\(n\)}-abelian and {\(n\)}-exact categories}.
\newblock Math. Z., {\bf 283}(3-4):703--759, 2016.

\bibitem{JohnsonYau-2-dimensional-categories}
N.~Johnson and D.~Yau.
\newblock {\em 2-dimensional categories}.
\newblock Oxford Univ. Press, Oxford, 2021.

\bibitem{Jorgensen-abelian-subcategories-of-triangulated-categories-induced-by-simple-minded-systems}
P.~J{\o}rgensen.
\newblock {\em Abelian subcategories of triangulated categories induced by
  simple minded systems}.
\newblock Math. Z., {\bf 301}(1):565--592, 2022.

\bibitem{Homological-mirror-symmetry-lecture-notes-in-physics}
A.~Kapustin, M.~Kreuzer, and K.-G. Schlesinger, editors.
\newblock {\em Homological mirror symmetry}, volume 757 of {\em Lecture Notes
  in Physics}.
\newblock Springer-Verlag, Berlin, 2009.
\newblock New developments and perspectives.

\bibitem{Karoubi-algebres-de-Clifford-et-K-theorie}
M.~Karoubi.
\newblock {\em Alg\`ebres de {C}lifford et {$K$}-th\'{e}orie}.
\newblock Ann. Sci. \'{E}cole Norm. Sup. (4), {\bf 1}:161--270, 1968.

\bibitem{KlapprothMsapatoShah-Idempotent-completions-of-n-exangulated-categories}
C.~Klapproth, D.~Msapato, and A.~Shah.
\newblock {\em Idempotent completions of $n$-exangulated categories}.
\newblock Preprint, 2022.
\newblock \url{https://arxiv.org/abs/2207.04023v4}.

\bibitem{Krause-KS-cats-and-projective-covers}
H.~Krause.
\newblock {\em Krull--{S}chmidt categories and projective covers}.
\newblock Expo. Math., {\bf 33}(4):535--549, 2015.

\bibitem{krause}
H.~Krause.
\newblock {\em Homological theory of representations}, volume 195 of {\em
  Cambridge Studies in Advanced Mathematics}.
\newblock Cambridge University Press, Cambridge, 2022.

\bibitem{MacLane-categories-for-the-working-mathematician}
S.~{Mac Lane}.
\newblock {\em Categories for the working mathematician}, volume~5 of {\em
  Grad. Texts in Math.}
\newblock Springer-Verlag, New York, second edition, 1998.

\bibitem{Msapato-the-karoubi-envelope-and-weak-idempotent-completion-of-an-extriangulated-category}
D.~Msapato.
\newblock {\em The {K}aroubi envelope and weak idempotent completion of an
  extriangulated category}.
\newblock Appl. Categ. Structures, {\bf 30}(3):499--535, 2022.

\bibitem{NakaokaPalu-extriangulated-categories-hovey-twin-cotorsion-pairs-and-model-structures}
H.~Nakaoka and Y.~Palu.
\newblock {\em Extriangulated categories, {H}ovey twin cotorsion pairs and
  model structures}.
\newblock Cah. Topol. G{\'{e}}om. Diff{\'{e}}r. Cat{\'{e}}g., {\bf
  60}(2):117--193, 2019.

\bibitem{ProsmansSchneiders-a-homological-study-of-bornological-spaces}
F.~Prosmans and J.-P. Schneiders.
\newblock {\em A homological study of bornological spaces}.
\newblock Pr{\'{e}}publications Ma\-th{\'{e}}\-ma\-ti\-ques de
  l'Universit{\'{e}} Paris 13, {\bf 00-21}, 2000.

\bibitem{Rotman}
J.~J. Rotman.
\newblock {\em An introduction to homological algebra}.
\newblock Universitext. Springer, New York, second edition, 2009.

\bibitem{Shah-Krull-Remak-Schmidt}
A.~Shah.
\newblock {\em Krull-{R}emak-{S}chmidt decompositions in {H}om-finite additive
  categories}.
\newblock Expo. Math., {\bf 41}(1):220--237, 2023.

\bibitem{ThomasonTrobaugh-higher-algebraic-K-theory-of-schemes-and-of-derived-categories}
R.~W. Thomason and T.~Trobaugh.
\newblock {\em Higher algebraic {$K$}-theory of schemes and of derived
  categories}.
\newblock In {\em The {G}rothendieck {F}estschrift, {V}ol. {III}}, volume~88 of
  {\em Progr. Math.}, pages 247--435. Birkh{\"{a}}user Boston, Boston, MA,
  1990.

\end{thebibliography}
\end{document}